\DeclareMathOperator{\sig}{sign}
\newtheorem{theorem}{Theorem}[section]
\newtheorem{utv*}{Proposition}
\newtheorem{hyp*}{Conjecture}
\newtheorem{lemma}[theorem]{Lemma}
\newtheorem{collorary}[theorem]{Corollary}
\newtheorem{defin}{Definition}
\newtheorem{zamech}{Remark}
\newtheorem*{th*}{Theorem}
\newcommand{\av}[2]{\langle #1\rangle_{_{\scriptstyle #2}}}
\newcommand{\ave}[1]{\langle #1\rangle}
\newcommand{\eq}[2][label]{\begin{equation}\label{#1}#2\end{equation}}
\def\sli{\sum\limits}
\def\ili{\int\limits}
\def\la{\lambda}
\def\R{\mathbb{R}}
\def\vf{\varphi}
\def\Om{\Omega}
\begin{document}

\title{Sharp weak type estimates for weights in the class $A_{p_1, p_2}$}
\author{Alexander Reznikov}
\address{Department of Mathematics, Michigan State University, East Lansing, MI 48824, USA;} \address{St.-Petersburg Department of the Steklov Mathematical Institute, Fontanka, 27, 191023, Saint Petersburg, Russia.}
\subjclass[2000]{42B20, 42B25}
\keywords{Bellman function, $A_{p_1, p_2}$ weight, $A_p$ weight, Muckenhoupt weight, $RH_p$ weight, Reverse H\"older condition.}
\date{}
\maketitle
\begin{abstract}
We get sharp estimates for the distribution function of nonnegative weights, which satisfy so called $A_{p_1, p_2}$ condition. For particular choices of parameters $p_1$, $p_2$ this condition becomes an $A_p$-condition or Reverse H\"{o}lder condition. We also get maximizers for these sharp estimates.
We use the Bellman technique and try to carefully present and motivate our tactics.
As an illustration of how these results can be used, we deduce the following result: if a weight $w$ is in $A_2$ then it self-improves to a weight, which satisfies a Reverse H\"{o}lder condition.
\end{abstract}
\section{Introduction}
\subsection{Problem setting: basic definitions}
Put $I=[0,1]$ and take $p_{1}>p_{2}$, $p_{i}\not= 0, \pm\infty$.
For every non-negative function $\vf$ and any interval $J\subset I$ we denote
$$
\av{\vf}{J}=\frac{1}{|J|}\ili_{J}\vf (t) dt,
$$
where $|J|$ is a length of the interval $J$.
For simplicity, when we take an average over the whole interval $I$, we drop the subindex and write $\ave{\vf}$.

Take a nonnegative function $w$. Note that by the H\"{o}lder inequality we have
\begin{equation}\label{hol}
\av{w^{p_{1}}}{J}^{\frac{1}{p_{1}}}\geqslant \av{w^{p_{2}}}{J}^{\frac{1}{p_{2}}}.
\end{equation}
Let $Q>1$. We are going to consider such functions $w\geqslant 0$ that the following ``reverse'' inequality is true:
\begin{equation}\label{invhol}
\av{w^{p_{1}}}{J}^{\frac{1}{p_{1}}}\leqslant Q \cdot \av{w^{p_{2}}}{J}^{\frac{1}{p_{2}}} \qquad \forall J\subset I.
\end{equation}
If $p_{1}>p_{2}>1$ then \eqref{invhol} is called the reverse H\"{o}lder inequality. If $p_{1}=1$, $p_{2}=-\frac{1}{p-1}$ for a certain $p>1$ then \eqref{invhol} is a famous $A_{p}$-condition.

If $w\geqslant 0$ satisfies \eqref{invhol}, we write
$$
w\in A_{p_{1}, p_{2}}^{Q}.
$$
We are interested in the following question: how big can $w$ be? That is, for given $\lambda$, we want to estimate the measure of the set
$$
\{t\in I: w(t)\geqslant \lambda\}.
$$
\newpage
\subsection{Bellman setting and initial properties}
\subsubsection{History of the question}
Recently theory of weighted estimates had a great progress. $A_p$ weights play a key role in theory of singular integrals on weighted spaces. That is why we think that sharp estimates for their distribution function is interesting.

Bellman function related to harmonic analysis appeared in the work of Burkholder, ~\cite{Bu}. After that the first appearance was in the preprint of the paper by Nazarov-Treil-Volberg, ~\cite{NTV}.

Slowly different methods to find an exact Bellman function were developed. Reader can find them in papers ~\cite{SlVa}, ~\cite{VaVo}, ~\cite{Va2}, ~\cite{Va}.

There were two works by V. Vasyunin, ~\cite{Va2}, ~\cite{Va}, which are related to the question we are concerned in. He gave a sharp estimate of the $\av{w^{q}}{I}$ for every $q\in \R$, with the assumption that $w\in A_{p_1, p_2}$.
After the work ~\cite{Va2}, M. Dindo\v{s} and T. Wall, ~\cite{DiWa}, found the sharp $A_p$-``norm'' of a function, which is in a Reverse H\"{o}lder class, mentioned above.
V. Vasyunin used a Bellman technique and we shall follow it. However, we should make some changes, since in Vasyunins work he was able to reduce the question to solving a certain ODE. We can not do it and we are going to solve a PDE, following the Monge--Amp\`{e}re Technique, see ~\cite{VaVo}.

We should mention the following. After the work ~\cite{SlVa} was finished, there was an investigation of the question, similar to our, but in the space $BMO$ instead of $A_{p_1,p_2}$ (i.e., estimating the distribution function of a function $\vf$, which is in a ``ball'' in the $BMO$ ``norm''). Even though it was discussed, it was never published. We follow the pattern of this investigation.

Finally, we mention that applications of estimates we are giving arise in many questions, related to Calderon-Zygmund operators. In the Section \ref{illustr} we show how to deduce a Reverse H\"older inequality with sharp power for $A_2$ weights. Such inequalities are very useful, we refer the reader to papers ~\cite{Pe} and ~\cite{HytPe}. 
\subsubsection{Acknowledgements}
I am infinitely grateful to Professors Vasily Vasyunin and Alexander Volberg for spending a lot of time discussing this problem with me.

I also want to thank Carlos P\'{e}rez, the organizers of the 19th Summer St. Petersburg Meeting in Mathematical Analysis and the Analysis and PDE seminar at Michigan State University for an opportunity to present these results.

Finally and mostly I want to thank my mother for all her support.
\subsubsection{Initial definitions}
Denote
$$
\Om=\{x=(x_{1}, x_{2})\colon x_{i}\geqslant 0,\ x_{2}^{\frac{1}{p_{2}}}\leqslant
x_{1}^{\frac{1}{p_{1}}}\leqslant Q x_{2}^{\frac{1}{p_{2}}}\}.
$$
For every point $x\in \Om$ we set
\begin{equation}\label{defB}
\mathcal{B}(x_{1}, x_{2}; \lambda)=\sup \left\{ |\{t\colon w(t)\geqslant \lambda \}| \colon \ave{w^{p_{1}}}=x_{1}, \ave{w^{p_{2}}}=x_{2}, w\in A_{p_{1}, p_{2}}^{Q} \right \}.
\end{equation}
Note that the definition of the $\Omega$ is caused by \eqref{hol} and \eqref{invhol}.

We also need the following remark:
\begin{zamech}
For every point $x=(x_{1}, x_{2})\in \Om$ there is a function $w\in A_{p_{1}, p_{2}}^{Q}$ such that $\ave{w^{p_{1}}}=x_{1}$ and $\ave{w^{p_{2}}}=x_{2}$.
\end{zamech}
This remark has a proof, which is the same as the proof of the Lemma \ref{kusochnaya}.
This remark shows that $\mathcal{B}$ is defined (not equal to $-\infty$) on the whole domain $\Omega$.

\begin{zamech}
Obviously\textup, if $\lambda\leqslant 0$ then $\mathcal{B}(x; \lambda)=1$ for every $x$.

In the future we consider only $\lambda>0$.
\end{zamech}
Denote
$$
\Gamma_{1}=\{(x_{1}, x_{2})\colon x_{i}\geqslant 0, x_{2}^{\frac{1}{p_{2}}}=
x_{1}^{\frac{1}{p_{1}}} \},
$$
$$
\Gamma_{Q}=\{(x_{1}, x_{2})\colon x_{i}\geqslant 0, x_{1}^{\frac{1}{p_{1}}}= Q
x_{2}^{\frac{1}{p_{2}}}\}.
$$

\begin{lemma}\label{boundlambda}
Let $(v^{p_{1}}, v^{p_{2}})\in \Gamma_{1}$. Then
$$
\mathcal{B}(v^{p_{1}}, v^{p_{2}}; \lambda)=\begin{cases} 1, &v\geqslant \lambda \\
                                      0, &v<\lambda.
                                           \end{cases}
$$
\end{lemma}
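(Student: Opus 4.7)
The plan is to exploit that $\Gamma_1$ is precisely the equality locus for H\"older's inequality \eqref{hol}. A point $(v^{p_1}, v^{p_2}) \in \Gamma_1$ encodes
$$
\ave{w^{p_1}}^{1/p_1} = \ave{w^{p_2}}^{1/p_2} = v,
$$
so any admissible $w$ saturates the power mean inequality. Since $p_1 > p_2$ and both exponents are nonzero, this forces $w$ to be constant a.e.: apply Jensen to the map $s \mapsto s^{p_1/p_2}$ on $[0,\infty)$, which is strictly convex when $p_1/p_2 > 1$ or $p_1/p_2 < 0$ and strictly concave when $0 < p_1/p_2 < 1$ (these are the three subcases given by the signs of $p_1, p_2$). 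In each case equality characterizes $w^{p_2}$, and hence $w$ itself, as essentially constant.

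Once constancy is established, the value is pinned down by either average: $w \equiv v$ a.e. Since $Q > 1$, the constant function $v$ trivially satisfies \eqref{invhol}, so $w \equiv v$ does belong to $A^Q_{p_1, p_2}$ and is the unique admissible function with the prescribed averages. The supremum in \eqref{defB} therefore reduces to a one-element computation: $|\{t \in I \colon v \geq \lambda\}|$ equals $1$ when $v \geq \lambda$ and $0$ when $v < \lambda$, which is the claim.

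The only mildly technical point is a uniform handling of the three sign configurations of $(p_1, p_2)$ when extracting constancy from equality in \eqref{hol}; everything else is mechanical. Conceptually, the lemma records the boundary data on $\Gamma_1$ for the Monge--Amp\`ere PDE that will subsequently be set up to identify $\mathcal{B}$ throughout the interior of $\Omega$.
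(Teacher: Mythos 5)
Your proposal is correct and follows essentially the same route as the paper: equality in H\"older's inequality on $\Gamma_1$ forces $w$ to be constant equal to $v$, so the supremum in \eqref{defB} is over the single function $w\equiv v$ and the measure of the level set is $1$ or $0$ according to whether $v\geqslant\lambda$. Your added detail (the Jensen argument for extracting constancy across the sign cases, and the check that the constant function is admissible) merely fills in steps the paper treats as immediate.
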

\begin{proof}[Proof]
Let $\ave{w^{p_{1}}}=v^{p_{1}}$, $\ave{w^{p_{2}}}=v^{p_{2}}$. Then the H\"{o}lder inequality becomes an equality and therefore $w$ is identically equal to $v$. Thus if $v\geqslant \lambda$ then $\{t: w(t)\geqslant \lambda\}=I$ and $\mathcal{B}(v^{p_{1}}, v^{p_{2}}; \lambda)=1$. Similarly, if $v<\lambda$ then $\mathcal{B}(v^{p_{1}}, v^{p_{2}}; \lambda)=0$.
\end{proof}

Now we are going to get rid of the $\lambda$ using homogeneity. Take $w\in A_{p_{1}, p_{2}}^{Q}$ and $\ave{w^{p_{1}}}=x_{1}$, $\ave{w^{p_{2}}}=x_{2}$. For a positive number $s$ denote $\tilde{w}(t)=s\cdot w(t)$. Then $\tilde{w}\in A_{p_{1}, p_{2}}^{Q}$ and $\ave{\tilde{w}^{p_{1}}}=s^{p_{1}}x_{1}$, $\ave{\tilde{w}^{p_{2}}}=s^{p_{2}}x_{2}$.
Also
$$
w(t)\geqslant \lambda \Leftrightarrow \tilde{w}(t)\geqslant s\lambda.
$$
Therefore,
$$
\mathcal{B}(x_{1}, x_{2}; \lambda)=\mathcal{B}(s^{p_{1}}x_{1}, s^{p_{2}}x_{2}; s\lambda).
$$
Put $s=\frac{1}{\lambda}$. Then we get
$$
\mathcal{B}(x_{1}, x_{2}; \lambda)=\mathcal{B}(\lambda^{-p_{1}}x_{1}, \lambda^{-p_{2}}x_{2}; 1),
$$
so it suffices to find only $\mathcal{B}(x_{1}, x_{2}; 1)$ for every $(x_{1}, x_{2})\in \Om$.
We set
$$
\mathcal{B}(x_{1}, x_{2})=\mathcal{B}(x_{1}, x_{2}; 1).
$$
Lemma \ref{boundlambda} tells that
\begin{equation}\label{bound}
\mathcal{B}(v^{p_{1}}, v^{p_{2}})=\begin{cases} 1, &v\geqslant 1 \\ 0, &v<1. \end{cases}
\end{equation}
\subsection{Structure of the paper}
We would like to write the structure of the paper. We do it here because we have just defined the main object of the paper.

The strategy is the following: we deduce some heuristic properties of function $\mathcal{B}$. Then we try to find a function $B$, which satisfies these properties.

In subsections \ref{sectlocconc} and \ref{secdeghess} we deduce the main property of $\mathcal{B}$. Then in Section \ref{secteccalc} we make some technical calculations. In the subsection \ref{parmotiv} we present some more ideology which helps in such kind of problems.

Further, in the Section \ref{search} we show how to find an appropriate candidate for $\mathcal{B}$. The main machinery that will be used is so called Monge--Amp\`{e}re equation, which we briefly describe in Subsection \ref{MongeAmpere}. The reader can read ~\cite{VaVo} for more examples.

After we find a function $B$, which is the most natural candidate for being our Bellman function, we start proving that $B=\mathcal{B}$. In the Section \ref{bolshe} we prove that $B\geqslant \mathcal{B}$.

To prove that $B\leqslant \mathcal{B}$, we need to take an $x$, $x\in\Om$, and find a function $w\in A_{p_{1}, p_{2}}^{Q}$ such that $(\ave{w^{p_{1}}}, \ave{w^{p_{2}}})=x$ and $B(x)=|\{t\colon w(t)\geqslant 1\}|$. We shall do it in the Section \ref{menshe}. We also emphasize that the function $\mathcal{B}$ and its properties somehow carry an information about attainability of the supremum (i.e., if $\sup=\max$) and about the maximizer.

\subsection{The main property: local concavity}\label{sectlocconc}
We give the following definition.
\begin{defin}
A function $F$ is called locally concave in a domain $\Omega$ if for every $x\in \Omega$, for every convex neighborhood $U$ of $x$, such that $U\subset \Omega$, the following inequality holds\textup:
$$
F(\mu x + (1-\mu)y)\geqslant \mu F(x)+(1-\mu)F(y), \; \; \forall y\in U, \; \; \forall \mu\in[0,1].
$$
\end{defin}
In this section we will use some heuristics to conclude the main property of the Bellman function. We can not prove this property directly, but we will use it to get an appropriate candidate for $\mathcal{B}$.

Assume that we have two points $y=(y_{1}, y_{2})\in \Om$ and $z=(z_{1}, z_{2})\in\Om$, and the interval $[y,z]=\{\mu y + (1-\mu)z\colon \mu\in[0,1]\}\subset \Omega$. Assume for simplicity that the supremum in \eqref{defB} is attained on functions $w_{y}$ and $w_{z}$ respectively.
For some $\mu \in (0,1)$ take $x=\mu y + (1-\mu) z$ --- a point on the line segment, which connects $y$ and $z$. Denote
$$
w(t)=\begin{cases} w_{y}(\frac{t}{\mu}), &t\in [0, \mu) \\
                   w_{z}(\frac{t}{1-\mu}), &t\in [\mu, 1].
     \end{cases}
$$
Then
$$
\ave{w^{p_{k}}}=\ili_{0}^{\mu}w_{y}^{p_{k}}(\frac{t}{\mu})dt + \ili_{\mu}^{1} w_{z}^{p_{k}}(\frac{t}{1-\mu})dt=\mu y_{k}+ (1-\mu)z_{k}=x_{k}.
$$

To be able to compare $\mathcal{B}(x)$ with $|\{t\colon w(t)\geqslant 1\}|$ we need one more thing, namely, $w\in A_{p_{1}, p_{2}}^{Q}$. However, we can not prove it.
Nevertheless, if $w\in A_{p_{1}, p_{2}}^{Q}$ then
\begin{multline}
\mathcal{B}(x)\geqslant|\{t\colon w(t)\geqslant 1\}|=\mu\cdot |\{t\colon w_{y}(t)\geqslant 1\}| + (1-\mu) \cdot|\{t\colon w_{z}(t)\geqslant 1\}| =\\= \mu \mathcal{B}(y) + (1-\mu)\mathcal{B}(z).
\end{multline}
This property of a function is called local concavity. Note that we did not prove it.
\subsection{Degeneration of the Hessian}\label{secdeghess}
Assume that we have a smooth function $B$. Then $B$ is local concave if and only if
$$
\frac{d^{2}B}{dx^{2}}=\begin{pmatrix} B_{x_{1}x_{1}} & B_{x_{1}x_{2}} \\ B_{x_{2}x_{1}} & B_{x_{2}x_{2}} \end{pmatrix} \leqslant 0.
$$
Moreover, we want to find the ``best'' concave function. ``Best'' means that $B$ must be as small as possible (since we want to estimate something from above). It gives us a hope that local concavity is ``sharp'', i.e., that $\frac{d^{2}B}{dx^{2}}$ degenerates (as a trivial example we mention that in the one variable case a straight line is the smallest concave function with fixed boundary values). Namely, for every point $x\in \Omega$ there is a direction $\overrightarrow{m}(x)$ such that $B$ is linear in this direction. This just means that
\begin{equation}\label{det}
\det\left( \frac{d^{2}B}{dx^{2}}\right)=0.
\end{equation}

Our plan is the following. In Section \ref{search} we find a function $B(x)$, defined in $\Omega$, which is locally concave and satisfies \eqref{det} and the boundary condition \eqref{bound}.

Next, in Section \ref{bolshe} we prove that $B(x)\geqslant \mathcal{B}(x)$ and in Section \ref{menshe} we prove that $B(x)\leqslant \mathcal{B}(x)$. It will mean that $\mathcal{B}(x)=B(x)$ and, therefore, that we reached our goal.
\subsection{On the Monge--Amp\`{e}re PDE}\label{MongeAmpere}
In this subsection we state the following known result.
\begin{theorem}
Let $B$ be a function defined in $\Omega$ and assume
$$
\det\left( \frac{d^{2}B}{dx^{2}}\right)=0.
$$
Then $B$ can be represented as $B(x)=t_{0}+t_{1}x_{1}+t_{2}x_{2}$, where $t_{1}=B^{\prime}_{x_{1}}$, $t_{2}=B^{\prime}_{x_{2}}$, and
\begin{equation}\label{difform}
dt_{0}+x_{1}dt_{1}+x_{2}dt_{2}=0.
\end{equation}
\end{theorem}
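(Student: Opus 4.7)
The plan is to introduce Legendre-type auxiliary variables and to observe that the representation asserted in the theorem is essentially a tautology coming from a clever definition of $t_0$, while the Monge--Amp\`ere hypothesis is what forces the triple $(t_0,t_1,t_2)$ to depend on a single parameter.

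First I would set $t_1 := B'_{x_1}$ and $t_2 := B'_{x_2}$ as prescribed, and then simply \emph{define} $t_0(x) := B(x) - t_1(x)\, x_1 - t_2(x)\, x_2$. With this choice the claimed identity $B(x) = t_0 + t_1 x_1 + t_2 x_2$ holds by construction, so it remains only to verify the $1$-form relation \eqref{difform}. This is a short calculation: differentiating the definition of $t_0$ and using $dB = B_{x_1}\,dx_1 + B_{x_2}\,dx_2 = t_1\,dx_1 + t_2\,dx_2$, the terms $t_i\,dx_i$ cancel and one is left with $dt_0 = -x_1\,dt_1 - x_2\,dt_2$, which is exactly \eqref{difform}. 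Note that this computation does not use the Monge--Amp\`ere hypothesis at all.

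The role of the assumption $\det(d^{2}B/dx^{2}) = 0$ is rather to control the rank of the map $x \mapsto (t_1(x), t_2(x))$. A direct expansion gives
\begin{equation*}
dt_1 \wedge dt_2 = \bigl(B_{x_1 x_1}B_{x_2 x_2} - B_{x_1 x_2}^{2}\bigr)\, dx_1 \wedge dx_2 = \det\!\left(\frac{d^{2}B}{dx^{2}}\right) dx_1 \wedge dx_2 = 0,
\end{equation*}
so the forms $dt_1$ and $dt_2$ are everywhere proportional. Locally, away from critical points of $t_1$ and $t_2$, this means that $t_1$, $t_2$, and therefore also $t_0$ via \eqref{difform}, can be parametrized by a single variable $u$. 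This is what makes the representation useful: it reduces finding $B$ on a $2$-dimensional domain to integrating an ODE system for the curve $(t_0(u), t_1(u), t_2(u))$, whose preimages in $\Omega$ are the straight characteristic lines along which $B$ is affine.

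The main obstacle, if any, is conceptual rather than technical: one has to recognise that the classical Legendre substitution $t_0 := B - t_1 x_1 - t_2 x_2$ already encodes both the affine representation and the $1$-form identity, so that these two halves of the conclusion follow from an entirely formal manipulation. The Monge--Amp\`ere hypothesis is then precisely what ensures the triple $(t_0, t_1, t_2)$ traces a curve in $\R^{3}$ rather than spanning a surface, which is what makes the representation an effective tool for constructing the Bellman candidate later in Section \ref{search}.
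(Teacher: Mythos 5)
Your proposal is correct and coincides with the paper's own treatment: the paper states this as a known result and its explanatory remark defines $t_0=B(x)-x_1B'_{x_1}-x_2B'_{x_2}$ exactly as you do, so that the affine representation is tautological and \eqref{difform} follows from the same one-line differentiation using $dB=t_1\,dx_1+t_2\,dx_2$. Your additional observation that $dt_1\wedge dt_2=\det(d^2B/dx^2)\,dx_1\wedge dx_2=0$ correctly identifies where the Monge--Amp\`ere hypothesis enters, namely in forcing $(t_0,t_1,t_2)$ to be a one-parameter family constant along the characteristic lines.
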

This theorem can be understood in the following way. Define $t_0=B(x)-x_1 B^{\prime}_{x_1} - x_2 B^{\prime}_{x_2}$, $t_{1,2}=B^{\prime}_{x_{1,2}}$. Then for every $x\in \Om$ there is line segment, which contains $x$ and on which $t_{0}, t_1, t_2$ do not change.

\section{All technical calculations}\label{secteccalc}
In this section we shall state and proof many formulas that we will need in the future. Then we shall refer to these formulas and relations.
\subsection{Initial calculations}\label{initcalc}
We start from formalizing the geometry of $\Omega$. First, we prove the following lemma:
\begin{lemma}\label{twogammas}
For every $Q$, such that $Q>1$,  there are two solutions $\gamma_{\pm}$ $(0<\gamma_{-}<1<\gamma_{+})$ of the following equation:
\begin{equation}\label{gamma}
Q^{-p_{2}}\left(1-\frac{p_{2}}{p_{1}}\right)\gamma^{p_{2}}=
1-\frac{p_{2}}{p_{1}}Q^{-p_{2}}\gamma^{p_{2}-p_{1}}.
\end{equation}
\end{lemma}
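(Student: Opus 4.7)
My plan is to rewrite equation~\eqref{gamma} in the form $g(\gamma)=1$, where
\begin{equation*}
g(\gamma):=Q^{-p_2}\left(1-\frac{p_2}{p_1}\right)\gamma^{p_2}+\frac{p_2}{p_1}Q^{-p_2}\gamma^{p_2-p_1},
\end{equation*}
and then use elementary calculus to show that $g$ attains the value $1$ exactly twice on $(0,\infty)$, once in $(0,1)$ and once in $(1,\infty)$. A direct substitution gives $g(1)=Q^{-p_2}$, which is $\neq 1$ for every $Q>1$ and every admissible $p_2$; the sign of $g(1)-1$ depends on the sign of $p_2$, but the value itself is never $1$, so $\gamma=1$ is never a solution.

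The heart of the argument is the observation that $g$ has a unique critical point on $(0,\infty)$, located precisely at $\gamma=1$. After differentiating and factoring out a positive quantity, the equation $g'(\gamma)=0$ reduces to
\begin{equation*}
\left(1-\frac{p_2}{p_1}\right)p_2\gamma^{p_1}=\frac{p_2}{p_1}(p_1-p_2),
\end{equation*}
which simplifies algebraically to $\gamma^{p_1}=1$, hence $\gamma=1$ since $p_1\neq 0$. This computation is uniform in the signs of $p_1,p_2$, which is what makes the approach clean.

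It then remains to compute the asymptotics of $g$ at $0^+$ and $\infty$, compare them with $g(1)=Q^{-p_2}$, and invoke the Intermediate Value Theorem on each side of $\gamma=1$. Depending on which of the three admissible sign regimes $0<p_2<p_1$, $p_2<0<p_1$, or $p_2<p_1<0$ we are in, the signs of the coefficients $(1-p_2/p_1)$ and $p_2/p_1$ and the dominant exponent at each endpoint vary, but in every case the endpoint limits and the single interior critical value $Q^{-p_2}$ combine so that the horizontal line $y=1$ is crossed exactly once on each of $(0,1)$ and $(1,\infty)$. Uniqueness within each interval is automatic from the fact that $g'$ has constant sign there. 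The main obstacle, and essentially the only subtle point, is the sign bookkeeping across the three subcases; the underlying calculus skeleton is identical throughout.
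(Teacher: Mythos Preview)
Your approach is correct and essentially identical to the paper's: the paper sets $f(t)=(1-\tfrac{p_2}{p_1})t^{p_2}+\tfrac{p_2}{p_1}t^{p_2-p_1}$ and seeks $f(t)=Q^{p_2}$, which is your $g(\gamma)=1$ after the harmless rescaling $g=Q^{-p_2}f$; the unique-critical-point computation and the endpoint/monotonicity analysis are the same. One small economy: since the endpoint behavior and the relation between $g(1)=Q^{-p_2}$ and $1$ depend only on $\operatorname{sign}(p_2)$, two cases ($p_2>0$ versus $p_2<0$) suffice rather than three.
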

\begin{proof}[Proof]
Put
$f(t)=\left(1-\frac{p_{2}}{p_{1}}\right)t^{p_{2}}+\frac{p_{2}}{p_{1}}t^{p_{2}-p_{1}}$.
We want to prove that there are two values of $t$ such that
$f(t)=Q^{p_{2}}$. Obviously,
$$
f^{\prime}(t)=p_{2}\frac{p_{1}-p_{2}}{p_{1}}t^{p_{2}-1}+
\frac{p_{2}}{p_{1}}(p_{2}-p_{1})t^{p_{2}-p_{1}-1}=
\frac{p_{2}}{p_{1}}(p_{1}-p_{2})t^{p_{2}-p_{1}-1}(t^{p_{1}}-1).
$$
Observe that
$$
\sig\left(\frac{t^{p_{1}}-1}{p_{1}}\right)=\sig(t-1),
$$
so
$$
\sig(f^{\prime}(t))=\sig(p_{2}(t-1)).
$$
Now we consider two cases.

Case 1: $p_{2}>0$. Then $f(0)=\infty$, $f(\infty)=\infty$ and $f(1)=1$. Moreover, when $t\in [0,1]$ then $f(t)$ decreases from $\infty$ to $1$; when $t\in[1, \infty]$ then $f(t)$ increases from $1$ to $\infty$. The observation that $Q^{p_{2}}>1$ finishes the proof for this case.
\begin{center}
\includegraphics[width=0.5\linewidth]{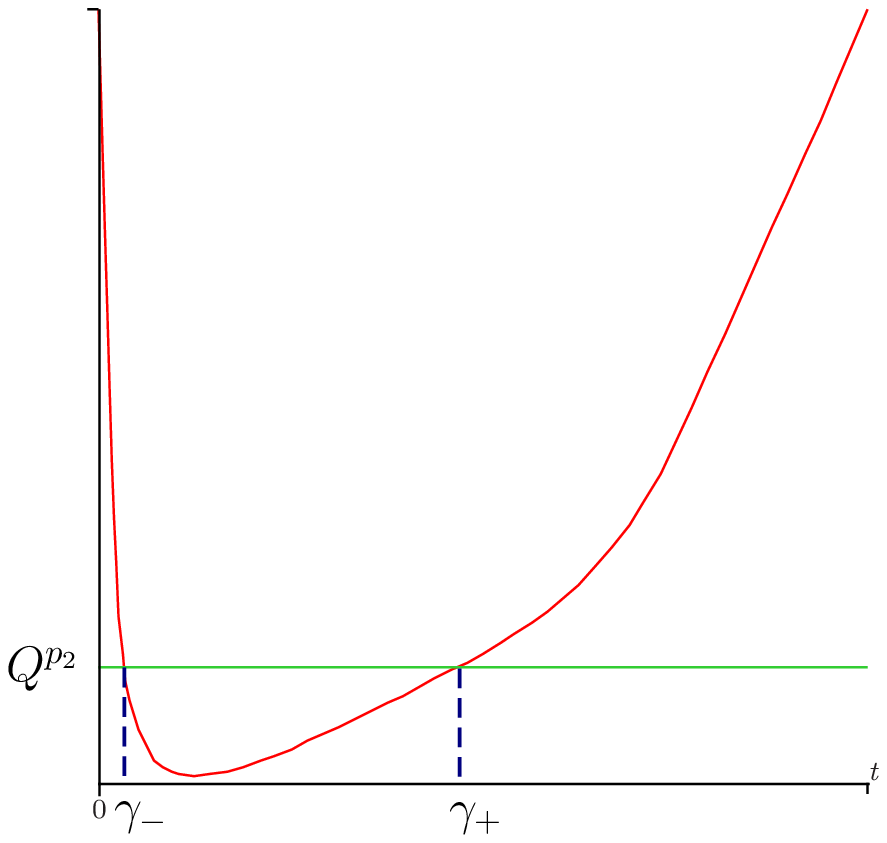}
\end{center}

Case 2: $p_{2}<0$. Then $f(0)=-\infty$, $f(\infty)=0$, $f(1)=1$. Moreover, when $t\in [0,1]$ then $f(t)$ increases from $-\infty$ to $1$; when $t\in[1, \infty]$ then $f(t)$ decreases from $1$ to $0$. The observation that $Q^{p_{2}}<1$ finishes the proof.

\begin{center}
\includegraphics[width=0.5\linewidth]{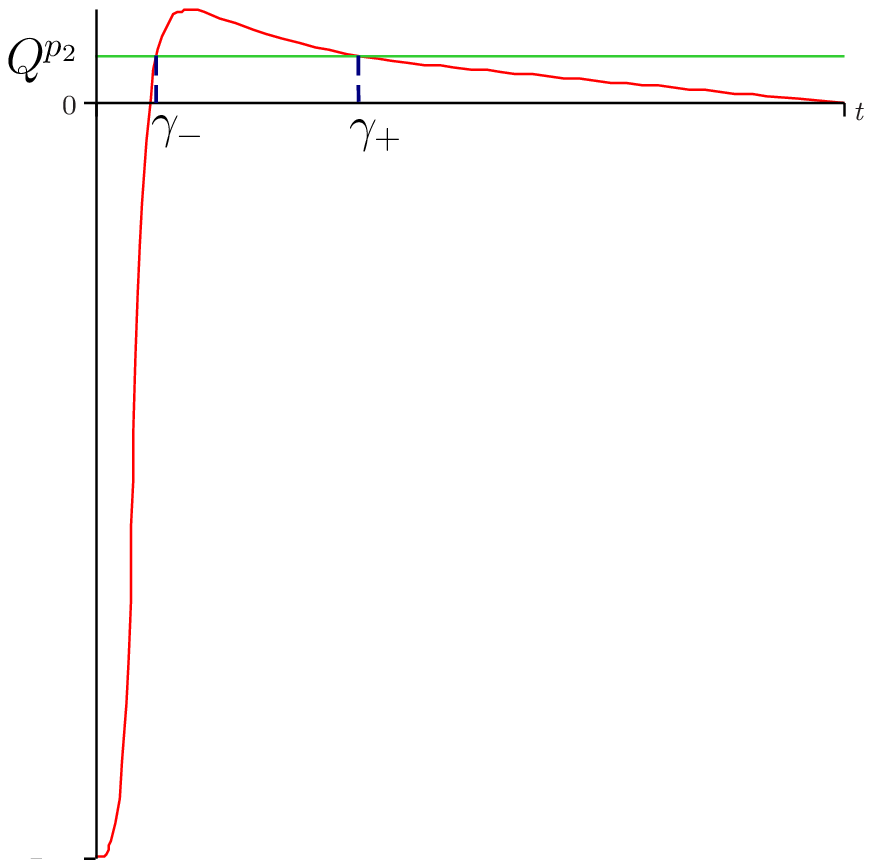}
\end{center}
\end{proof}
\begin{lemma}\label{lemma1}
For every point $(v^{p_{1}}, v^{p_{2}})\in \Gamma_1$ there are two tangent lines $\ell_{+}(v)$ and $\ell_{-}(v)$ to the $\Gamma_{Q}$, such that $v\in \ell_{\pm}(v)$.
These tangent lines are defined by following equations:
\begin{equation}\label{tangents}
x_{2}=\frac{p_{2}}{p_{1}}Q^{-p_{2}}a_{\pm}^{p_{2}-p_{1}}(x_{1}-v^{p_{1}})+v^{p_{2}},
\end{equation}
where $a_\pm = \gamma_\pm v$.
\end{lemma}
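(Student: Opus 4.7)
The plan is to reduce the tangency condition to equation \eqref{gamma} of Lemma \ref{twogammas} by a direct parametrization of $\Gamma_Q$.

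First, I parametrize $\Gamma_Q$ by the quantity $a := x_1^{1/p_1}$, so that a generic point on $\Gamma_Q$ is $(a^{p_1},\, Q^{-p_2}a^{p_2})$. Differentiating, the slope of $\Gamma_Q$ at this point is
$$
\frac{dx_2}{dx_1} = \frac{p_2 Q^{-p_2} a^{p_2-1}}{p_1 a^{p_1-1}} = \frac{p_2}{p_1}Q^{-p_2}a^{p_2-p_1},
$$
so the tangent line to $\Gamma_Q$ through $(a^{p_1}, Q^{-p_2}a^{p_2})$ has the equation
$$
x_2 = \frac{p_2}{p_1}Q^{-p_2}a^{p_2-p_1}(x_1-a^{p_1}) + Q^{-p_2}a^{p_2}.
$$
This already has the shape of \eqref{tangents}; what remains is to pin down those values of $a$ for which the line also passes through $(v^{p_1}, v^{p_2})\in\Gamma_1$.

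Next, I substitute $x_1 = v^{p_1}$, $x_2 = v^{p_2}$ into the equation above and perform the change of variable $a = \gamma v$. Factoring out $v^{p_2}$ from both sides and simplifying, the tangency-through-$v$ condition reduces to
$$
1 = \tfrac{p_2}{p_1}Q^{-p_2}\gamma^{p_2-p_1}(1-\gamma^{p_1}) + Q^{-p_2}\gamma^{p_2},
$$
which, after grouping the $\gamma^{p_2}$ terms on the left, is exactly equation \eqref{gamma}. By Lemma \ref{twogammas} this equation has precisely two positive roots $\gamma_{-}<1<\gamma_{+}$, producing the two values $a_{\pm}=\gamma_{\pm}v$ and hence the two tangent lines $\ell_{\pm}(v)$ given by \eqref{tangents}.

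There is no real obstacle here beyond bookkeeping; the only thing to watch is the sign behavior when $p_2<0$, but Lemma \ref{twogammas} was already set up to treat both sign regimes, so the existence of exactly two tangent lines is inherited directly from that lemma. The geometric content is that $\Gamma_Q$ is a convex curve (with respect to the orientation dictated by the sign of $p_2$) and $(v^{p_1},v^{p_2})\in\Gamma_1$ lies strictly on one side of it, hence admits exactly two tangents --- the analytic version of this is precisely the dichotomy established in Lemma \ref{twogammas}.
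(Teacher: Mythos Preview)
Your proof is correct and follows essentially the same route as the paper: both compute the tangent line to $\Gamma_Q$ at a parametrized point $(a^{p_1},Q^{-p_2}a^{p_2})$, impose passage through $(v^{p_1},v^{p_2})$, substitute $a=\gamma v$, and reduce to equation~\eqref{gamma}. The only cosmetic difference is that the paper starts from the stated line~\eqref{tangents} and verifies it is tangent, whereas you derive it; the algebra is identical.
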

\begin{proof}[Proof]
Let $(v^{p_{1}}, v^{p_{2}})\in \Gamma_{1}$. Then
$$(a_{\pm}^{p_{1}}, Q^{-p_{2}}a_{\pm}^{p_{2}})\in \Gamma_{Q}.$$
Let $\ell_{\pm}(v)$ have an equation
$$
x_{2}=\frac{p_{2}}{p_{1}}Q^{-p_{2}}a_{\pm}^{p_{2}-p_{1}}(x_{1}-v^{p_{1}})+v^{p_{2}}.
$$
First of all, $(v^{p_{1}}, v^{p_{2}})\in \ell_{\pm}(v)$. Second,
$$
(a_{\pm}^{p_{1}}, Q^{-p_{2}}a_{\pm}^{p_{2}})\in \ell_{\pm}(v).
$$
To prove this, we use the definition of $\gamma_\pm$:
$$
Q^{-p_{2}}\left(1-\frac{p_{2}}{p_{1}}\right)\gamma_{\pm}^{p_{2}}=
1-\frac{p_{2}}{p_{1}}Q^{-p_{2}}\gamma_{\pm}^{p_{2}-p_{1}},
$$
whence
$$
Q^{-p_{2}}\left(1-\frac{p_{2}}{p_{1}}\right)a_{\pm}^{p_{2}}=
v^{p_{2}}-\frac{p_{2}}{p_{1}}Q^{-p_{2}}a_{\pm}^{p_{2}-p_{1}}v^{p_{1}},
$$
therefore
$$
Q^{-p_{2}}a_{\pm}^{p_{2}}=\frac{p_{2}}{p_{1}}Q^{-p_{2}}a_{\pm}^{p_{2}-p_{1}}(a_{\pm}^{p_{1}}-v^{p_{1}})+v^{p_{2}},
$$
which is just the required property that $(a_{\pm}^{p_{1}}, Q^{-p_{2}}a_{\pm}^{p_{2}})\in \ell_{\pm}(v)$. Also the slope of $\ell_{\pm}(v)$ is equal to the derivative of the function $x_{2}=Q^{-p_{2}}x_{1}^{\frac{p_{2}}{p_{1}}}$ at the point $(a_{\pm}^{p_{1}}, Q^{-p_{2}}a_{\pm}^{p_{2}})$, which finishes the proof.
\end{proof}
\begin{zamech}\label{zamechanie}
If $p_{1}>0$, then $\gamma_{+}^{p_{1}}>1$ and we get $\frac{a_+^{p_{1}}}{v^{p_{1}}}>1$, so $a_+^{p_{1}}>v^{p_{1}}$.
Thus, for every point $x=(x_1, x_2)$ on the segment of $\ell_{+}(v)$ with endpoints $(v^{p_{1}}, v^{p_{2}})$ and
$(a_+^{p_{1}}, Q^{-p_{2}}a_+^{p_{2}})$, we have $v^{p_{1}}\leqslant x_{1}\leqslant a_+^{p_{1}}$.

If $p_{1}<0$ then we have an inverse situation and for the same reason $a_+^{p_{1}}\leqslant x_{1} \leqslant v^{p_{1}}$.
\end{zamech}
\begin{zamech}
We remind that the point where tangent $\ell_{\pm}(v)$ touches $\Gamma_{Q}$ is $(a_{\pm}^{p_{1}}, Q^{-p_{2}}a_{\pm}^{p_{2}})$.
\end{zamech}
\begin{collorary}
Take a point $(1,1)$ and correspondent tangents $\ell_{\pm}=\ell_{\pm}(1)$. They intersect $\Gamma$ one more time at points $(v_{\pm}^{p_{1}}, v_{\pm}^{p_{2}})$. These points are defined by the following equations:
$$
v_{-}=\frac{\gamma_{-}}{\gamma_{+}},
$$
$$
v_{+}=\frac{\gamma_{+}}{\gamma_{-}}.
$$
\end{collorary}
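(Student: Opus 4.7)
The plan is to apply Lemma~\ref{lemma1} in reverse. The line $\ell_+(1)$ is tangent to $\Gamma_Q$ at the point $(\gamma_+^{p_1}, Q^{-p_2}\gamma_+^{p_2})$. If $\ell_+(1)$ passes through another point $(v^{p_1}, v^{p_2})\in\Gamma_1$, then, viewed as one of the two tangents from this new base point, Lemma~\ref{lemma1} says its touching point with $\Gamma_Q$ must be of the form $(a^{p_1}, Q^{-p_2}a^{p_2})$ with $a\in\{\gamma_+ v,\gamma_- v\}$. Comparing with the known touching point $\gamma_+$, the first option gives the trivial $v=1$, while the second gives $v=\gamma_+/\gamma_-$. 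The analogous analysis for $\ell_-(1)$ (whose touching point is $\gamma_-$) forces $v=\gamma_-/\gamma_+$.

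Conversely, I would set $v_+:=\gamma_+/\gamma_-$ and apply Lemma~\ref{lemma1} at the point $(v_+^{p_1},v_+^{p_2})\in\Gamma_1$. Its two tangents to $\Gamma_Q$ touch at $\gamma_\pm v_+$; in particular $\gamma_- v_+=\gamma_+$, so the tangent $\ell_-(v_+)$ touches $\Gamma_Q$ at exactly the same point as $\ell_+(1)$. Since a tangent line to a smooth curve is uniquely determined by its point of tangency, $\ell_-(v_+)=\ell_+(1)$, and therefore $(v_+^{p_1},v_+^{p_2})\in\ell_+(1)$. The argument for $v_-=\gamma_-/\gamma_+$ is entirely symmetric.

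To justify the phrase ``one more time'', I would observe that $\Gamma_1$ is the graph of the power function $x_1\mapsto x_1^{p_2/p_1}$, whose second derivative has constant sign on $x_1>0$, so any straight line meets $\Gamma_1$ in at most two points. Since Lemma~\ref{twogammas} gives $\gamma_-<1<\gamma_+$, the candidate points $v_\pm$ are distinct from $1$, so the second intersection point is uniquely $v_\pm$. I do not anticipate any genuine obstacle; the corollary is essentially a symmetry statement already built into the parametrization $a_\pm=\gamma_\pm v$ supplied by Lemma~\ref{lemma1}.
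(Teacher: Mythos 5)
Your proof is correct and follows essentially the same route as the paper: the paper's one-line justification is precisely your "converse" step, identifying $\ell_{\pm}(1)$ with $\ell_{\mp}(v_{\pm})$ because both are tangent to $\Gamma_Q$ at the point corresponding to $\gamma_{\pm}=\gamma_{\mp}\cdot v_{\pm}$. Your additional remarks on the uniqueness of the second intersection with $\Gamma_1$ are a harmless elaboration of what the paper leaves implicit.
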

This corollary is obvious: $\ell_{\pm}$ and $\ell_{\mp}(v_{\pm})$ are same lines, namely, these are the lines passing through the points $(1,1)$ and $(v_\pm^{p_1}, v_\pm ^{p_2})$ and being tangent to $\Gamma_Q$ at $(\gamma_{\pm}^{p_1}, Q^{-p_2}\gamma_{\pm}^{p_2})$.
\begin{lemma}
Take $x=(x_{1}, x_{2})\in \Omega$, $x\not\in \Gamma_{Q}$. Then there are two tangent to $\Gamma_{Q}$ lines which pass through $x$.
\end{lemma}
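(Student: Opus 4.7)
\medskip

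\noindent\textbf{Proof proposal.} The plan is to parameterize the tangent lines to $\Gamma_Q$ by the point of tangency and reduce the statement to counting zeros of a one-variable function. By Lemma \ref{lemma1} (or directly from its derivation), the tangent line to $\Gamma_{Q}$ at $(a^{p_{1}}, Q^{-p_{2}}a^{p_{2}})$ has equation
$$
x_{2}=\tfrac{p_{2}}{p_{1}}Q^{-p_{2}}a^{p_{2}-p_{1}}(x_{1}-a^{p_{1}})+Q^{-p_{2}}a^{p_{2}}.
$$
Plugging in the target point and multiplying by $Q^{p_{2}}$, the condition that this tangent passes through $x=(x_{1},x_{2})$ becomes $g(a)=Q^{p_{2}}x_{2}$, where
$$
g(a):=\tfrac{p_{2}}{p_{1}}a^{p_{2}-p_{1}}x_{1}+\left(1-\tfrac{p_{2}}{p_{1}}\right)a^{p_{2}}.
$$
So I only need to show that for every $x\in\Omega\setminus\Gamma_{Q}$ the equation $g(a)=Q^{p_{2}}x_{2}$ has exactly two positive solutions.

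A direct computation, using $\tfrac{p_{2}-p_{1}}{p_{1}}=-\bigl(1-\tfrac{p_{2}}{p_{1}}\bigr)$, gives
$$
g'(a)=p_{2}\left(1-\tfrac{p_{2}}{p_{1}}\right)a^{p_{2}-p_{1}-1}\bigl(a^{p_{1}}-x_{1}\bigr),
$$
so $g$ has a unique critical point on $(0,\infty)$ at $a_{0}:=x_{1}^{1/p_{1}}$, at which $g(a_{0})=x_{1}^{p_{2}/p_{1}}$. The point $(x_{1},Q^{-p_{2}}x_{1}^{p_{2}/p_{1}})$ lies on $\Gamma_{Q}$, and rewriting the defining inequalities of $\Omega$ after raising to the $p_{2}$-th power shows in every case that $Q^{p_{2}}x_{2}$ lies between the values $x_{1}^{p_{2}/p_{1}}$ (on $\Gamma_{Q}$) and $Q^{p_{2}}x_{1}^{p_{2}/p_{1}}$ (on $\Gamma_{1}$), with $Q^{p_{2}}x_{2}=x_{1}^{p_{2}/p_{1}}=g(a_{0})$ precisely when $x\in\Gamma_{Q}$. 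Hence the hypothesis $x\notin\Gamma_{Q}$ places the target value strictly on the opposite side of the extremum $g(a_{0})$ from the value at infinity.

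I would then split into two cases following the pattern of Lemma \ref{twogammas}. If $p_{2}>0$ then $p_{1}>p_{2}>0$, the coefficient $p_{2}(1-p_{2}/p_{1})$ is positive, and $g$ is strictly convex-like with $g(0^{+})=g(\infty)=+\infty$, so $a_{0}$ is a global minimum; by the above the horizontal level $Q^{p_{2}}x_{2}$ lies strictly above this minimum and the equation $g(a)=Q^{p_{2}}x_{2}$ has exactly one solution on each of $(0,a_{0})$ and $(a_{0},\infty)$. If $p_{2}<0$ then (whether $p_{1}>0$ or $p_{1}<0$) the coefficient $p_{2}(1-p_{2}/p_{1})$ has the sign that makes $a_{0}$ a global maximum, with $g\to-\infty$ at one endpoint and $g\to 0$ at the other; the target value $Q^{p_{2}}x_{2}$ is positive and strictly less than $g(a_{0})$, so again one solution on each side of $a_{0}$ by the intermediate value theorem.

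The main obstacle is purely bookkeeping: the signs of $p_{1}$ and $p_{2}$ flip the direction of the inequalities defining $\Omega$, the direction of monotonicity of $g$, and the boundary limits $g(0^{+})$ and $g(\infty)$. Organizing the proof around the uniform statement ``$g$ has a single interior extremum at $a_{0}$ with value $x_{1}^{p_{2}/p_{1}}$, and $x\in\Omega\setminus\Gamma_{Q}$ forces $Q^{p_{2}}x_{2}$ to lie strictly on the far side of this extremum from the value at a boundary of $(0,\infty)$'' lets one avoid repeating the same argument four times, reducing the case analysis to the same two cases already carried out in Lemma \ref{twogammas}.
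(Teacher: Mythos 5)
Your proposal is correct and matches the paper's intent exactly: the paper simply refers back to the proof of Lemma~\ref{lemma1}, which reduces to the one-variable analysis in Lemma~\ref{twogammas} of the auxiliary function $f(t)=(1-\tfrac{p_2}{p_1})t^{p_2}+\tfrac{p_2}{p_1}t^{p_2-p_1}$, and your $g(a)$ is precisely the direct generalization of that $f$ to a general interior point (indeed $g(a)=v^{p_2}f(a/v)$ when $x=(v^{p_1},v^{p_2})\in\Gamma_1$). One tiny imprecision in your write-up: for $p_2<0$ the coefficient $p_2\bigl(1-\tfrac{p_2}{p_1}\bigr)$ actually changes sign between the subcases $p_1>0$ and $p_1<0$; what makes $a_0$ a maximum in both is that the factor $a^{p_1}-x_1$ also reverses its sign pattern with $\sig(p_1)$, so the product $g'$ keeps the same sign pattern around $a_0$ in both subcases.
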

The proof of this lemma is the same as the proof of the Lemma \ref{lemma1}.

We also need the following observation.
\begin{lemma}\label{estimate}
$$
1-Q^{-p_{2}}\gamma_{+}^{p_{2}-p_{1}}>0
$$
\end{lemma}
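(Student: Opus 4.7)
The plan is to manipulate the defining equation \eqref{gamma} for $\gamma_+$ so that the expression $1 - Q^{-p_2}\gamma_+^{p_2-p_1}$ factors into pieces whose signs are transparent, and then finish by a sign analysis that uses only $\gamma_+>1$ (from Lemma \ref{twogammas}) and the assumption $p_1>p_2$.

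Concretely, I would subtract $Q^{-p_2}(1-p_2/p_1)\gamma_+^{p_2}$ from both sides of the equation
$$
Q^{-p_2}\Bigl(1-\tfrac{p_2}{p_1}\Bigr)\gamma_+^{p_2}=1-\tfrac{p_2}{p_1}Q^{-p_2}\gamma_+^{p_2-p_1}
$$
to eliminate the constant $1$ against a dangling term, and rewrite things in terms of $1-Q^{-p_2}\gamma_+^{p_2-p_1}$. A short computation (factoring out $\gamma_+^{p_2-p_1}$) gives the identity
$$
1-Q^{-p_2}\gamma_+^{p_2-p_1}=Q^{-p_2}\Bigl(1-\tfrac{p_2}{p_1}\Bigr)\gamma_+^{p_2-p_1}\bigl(\gamma_+^{p_1}-1\bigr).
$$
The prefactor $Q^{-p_2}\gamma_+^{p_2-p_1}$ is manifestly positive, so the sign of the right-hand side is the sign of $(1-p_2/p_1)(\gamma_+^{p_1}-1)$.

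Now I would split into the two cases that appear in the proof of Lemma \ref{twogammas}. If $p_1>0$, then $p_2<p_1$ forces $p_2/p_1<1$ and $\gamma_+>1$ forces $\gamma_+^{p_1}>1$, so both factors are positive. If $p_1<0$, then $p_2<p_1<0$ forces $p_2/p_1>1$ and $\gamma_+>1$ with $p_1<0$ forces $\gamma_+^{p_1}<1$, so both factors are negative. Either way the product is strictly positive, which proves the lemma.

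There is essentially no obstacle here; the only thing to be careful about is the case $p_1<0$, where several quantities flip sign simultaneously, and one must check that the flips match up so the final product stays positive. The key algebraic observation is that the identity above converts a potentially delicate inequality about $\gamma_+$ into a sign product that depends only on whether $\gamma_+>1$ and on the sign of $p_1$, both of which are already in hand.
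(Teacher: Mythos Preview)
Your proof is correct and follows essentially the same route as the paper: you rearrange the defining equation \eqref{gamma} to obtain the identity
\[
1-Q^{-p_2}\gamma_+^{p_2-p_1}=Q^{-p_2}\Bigl(1-\tfrac{p_2}{p_1}\Bigr)\gamma_+^{p_2-p_1}\bigl(\gamma_+^{p_1}-1\bigr),
\]
which is exactly the paper's identity $1-Q^{-p_2}\gamma_+^{p_2-p_1}=(p_1-p_2)Q^{-p_2}\gamma_+^{p_2}\dfrac{1-\gamma_+^{-p_1}}{p_1}$ written with a different choice of factored power, and your sign analysis via the case split on $\mathrm{sign}(p_1)$ matches the paper's use of $\mathrm{sign}\!\bigl(\tfrac{1-\gamma_+^{-p_1}}{p_1}\bigr)$.
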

\begin{proof}[Proof]
Since
$$
Q^{-p_{2}}\left(1-\frac{p_{2}}{p_{1}}\right)\gamma_{\pm}^{p_{2}}=
1-\frac{p_{2}}{p_{1}}Q^{-p_{2}}\gamma_{\pm}^{p_{2}-p_{1}},
$$
we get
$$
Q^{-p_{2}}\left(1-\frac{p_{2}}{p_{1}}\right)\gamma_{+}^{p_{2}}=
1+\left(1-\frac{p_{2}}{p_{1}}\right)Q^{-p_{2}}\gamma_{+}^{p_{2}-p_{1}}-Q^{-p_{2}}\gamma_{+}^{p_{2}-p_{1}}
$$
so, using $\gamma_{+}>1$, we get
$$
1-Q^{-p_{2}}\gamma_{+}^{p_{2}-p_{1}}=\frac{p_{1}-p_{2}}{p_{1}}Q^{-p_{2}}\gamma_{+}^{p_{2}}(1-\gamma_{+}^{-p_{1}})>0=(p_{1}-p_{2})Q^{-p_{2}}\gamma_{+}^{p_{2}}\frac{1-\gamma_{+}^{-p_{1}}}{p_{1}}>0.
$$
\end{proof}

We also need the following lemma.
\begin{lemma}\label{kusochnaya}
Suppose we have two positive numbers $u,v$ such that the line segment, which connects points $(u^{p_{1}}, u^{p_{2}})\in \Gamma_{1}$ and $(v^{p_{1}}, v^{p_{2}})\in \Gamma_{1}$, lies in $\Omega$. Suppose also that $\mu \in [0,1]$.
Denote
$$
w(t)=\begin{cases} u, &t\in [0, \mu) \\ v, &t\in [\mu, 1] \end{cases}.
$$
Then $w\in A_{p_{1}, p_{2}}^{Q}$.
\end{lemma}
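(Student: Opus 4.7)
The plan is to verify the defining inequality \eqref{invhol} directly by examining how the averages of $w^{p_1}$ and $w^{p_2}$ over an arbitrary subinterval $J\subset I$ behave. The key observation is that these averages, as a pair, always lie on the line segment between $(u^{p_1}, u^{p_2})$ and $(v^{p_1}, v^{p_2})$, which by hypothesis lies inside $\Omega$.

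I would split into two cases. \textbf{Case 1:} $J$ does not contain the point $\mu$. Then $w$ is constant on $J$, equal either to $u$ or to $v$, so $\langle w^{p_1}\rangle_J^{1/p_1}=\langle w^{p_2}\rangle_J^{1/p_2}$ and the inequality \eqref{invhol} holds trivially (with $Q\geqslant 1$). \textbf{Case 2:} $J=[a,b]$ with $a\leqslant \mu \leqslant b$. Set $\lambda = (\mu-a)/(b-a)\in[0,1]$; then
$$
\langle w^{p_k}\rangle_J = \lambda u^{p_k} + (1-\lambda) v^{p_k}, \qquad k=1,2.
$$
Hence the point $(\langle w^{p_1}\rangle_J,\langle w^{p_2}\rangle_J)$ equals $\lambda(u^{p_1},u^{p_2})+(1-\lambda)(v^{p_1},v^{p_2})$, which lies on the segment connecting $(u^{p_1},u^{p_2})$ and $(v^{p_1},v^{p_2})$. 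By assumption this segment is contained in $\Omega$, so by the definition of $\Omega$,
$$
\langle w^{p_2}\rangle_J^{1/p_2}\leqslant \langle w^{p_1}\rangle_J^{1/p_1} \leqslant Q\,\langle w^{p_2}\rangle_J^{1/p_2},
$$
which is exactly \eqref{invhol} on $J$.

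There is no real obstacle here: the whole statement is essentially a reformulation of the definition of $\Omega$ together with the fact that averaging a two-valued step function amounts to taking a convex combination of the two values. The only minor point to be careful about is the case $p_2<0$, where the function $t\mapsto t^{1/p_2}$ is decreasing, but this has already been absorbed into the definition of $\Omega$ via the inequalities $x_2^{1/p_2}\leqslant x_1^{1/p_1}\leqslant Q x_2^{1/p_2}$, so no separate argument is required. The boundary cases $\mu=0$ or $\mu=1$ reduce $w$ to a constant and are trivial.
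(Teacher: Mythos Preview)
Your proof is correct and follows essentially the same approach as the paper: split according to whether $J$ meets the jump point $\mu$, and in the nontrivial case observe that $(\langle w^{p_1}\rangle_J,\langle w^{p_2}\rangle_J)$ is a convex combination of $(u^{p_1},u^{p_2})$ and $(v^{p_1},v^{p_2})$, hence lies in $\Omega$ by hypothesis. The paper's proof is identical in structure, only with slightly different notation for the endpoints and the convex parameter.
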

\begin{proof}[Proof]
We take an interval $J\subset I$. If $J\subset [0,\mu]$ or $J\subset [\mu,1]$ then
$$
\av{w^{p_{1}}}{J}^{\frac{1}{p_{1}}}\av{w^{p_{2}}}{J}^{-\frac{1}{p_{2}}}=1<Q.
$$
If $J=[\alpha, \beta]$, $\alpha<\mu<\beta$ then
$$
\av{w^{p_{k}}}{J}=\frac{u^{p_{k}}(\mu-\alpha)+v^{p_{k}}(\beta-\mu)}{\beta-\alpha}.
$$
It means that the point $x=(\av{w^{p_{1}}}{J}, \av{w^{p_{2}}}{J})$ is a convex combination of the points $(u^{p_{1}}, u^{p_{2}})$ and $(v^{p_{1}}, v^{p_{2}})$, so $x\in \Omega$. Thus, $x_{1}^{\frac{1}{p_{1}}}x_{2}^{-\frac{1}{p_{2}}}\leqslant Q$, and, therefore, $w\in A_{p_{1}, p_{2}}^{Q}$.
\end{proof}
\begin{zamech}
In particular, if $J=I$, then we get that $\ave{w^{p_{k}}}=\mu u^{p_{k}}+(1-\mu)v^{p_{k}}$.
\end{zamech}
\subsection{Splitting of $\Omega$: formulas}
Now we want to split $\Omega$ into different subdomains. We write precise formulas and then present a picture to show what really happens.

\subsubsection{Case 1. $p_{1}>p_{2}>0$}
\begin{align*}
\Omega_{\textup I}&=\{ x\in \Om, \; x_{2}>Q^{-p_{2}}\frac{p_{2}}{p_{1}}\gamma_{+}^{p_{2}-p_{1}}(x_{1}-1)+1 \}\cup \{x\in \Om, \; x_{1}>\gamma_{+}^{p_{1}} \},\\
\Omega_{\textup{II}}&=\{x\in \Om \; x_{2}<Q^{-p_{2}}\frac{p_{2}}{p_{1}}\gamma_{+}^{p_{2}-p_{1}}(x_{1}-1)+1,\; x_{2}<Q^{-p_{2}}\frac{p_{2}}{p_{1}}\gamma_{-}^{p_{2}-p_{1}}(x_{1}-1)+1, \\ & \; \; \; \; \; \; \; \; \; \; \; \; \; \; \;\;\;\;\;\;\;\;\;\;\;\;\;\;\;\;\;\;\;\;\;\;\;\;\;\;\;\;\;\;\;\;\;\;\;\;\;\;\;\;\;\;\;\;\;\;\;\;\;\;\gamma_{-}^{p_{1}}<x_{1}<\gamma_{+}^{p_{1}} \},\\
\Omega_{\textup{III}}&=\{x\in \Om, \;x_{2}>Q^{-p_{2}}\frac{p_{2}}{p_{1}}\gamma_{-}^{p_{2}-p_{1}}(x_{1}-1)+1\},\\
\Omega_{\textup{IV}}&=\{x\in \Om, \;x_{2}<Q^{-p_{2}}\frac{p_{2}}{p_{1}}\gamma_{-}^{p_{2}-p_{1}}(x_{1}-1)+1, \; 0<x_{1}<\gamma_{-}^{p_{1}}\}.
\end{align*}
\begin{center}
\includegraphics[width=1\linewidth]{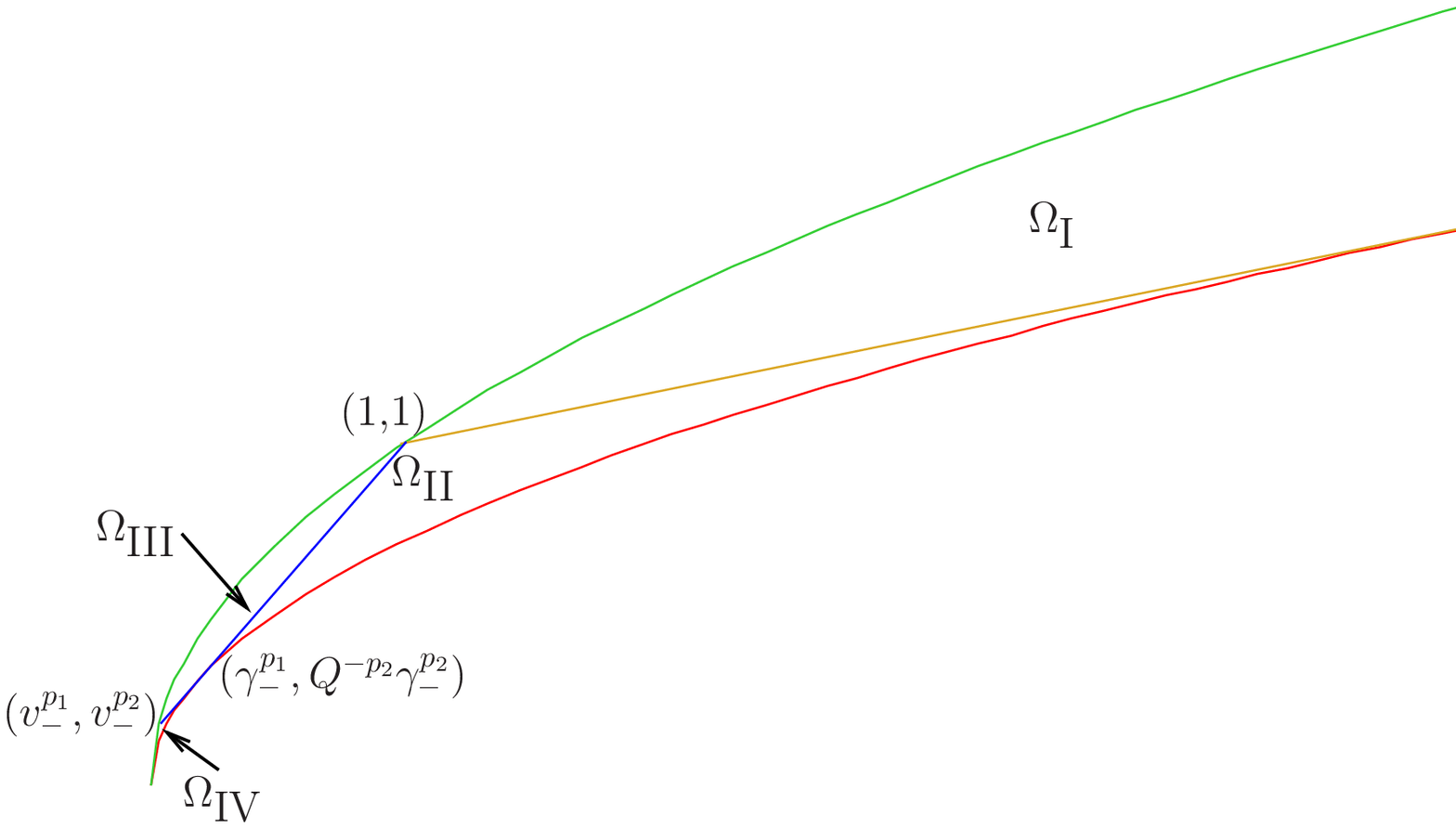}
\end{center}

\subsubsection{Case 2. $p_{1}>0>p_{2}$.}
\begin{align*}
\Omega_{\textup I}&=\{ x\in \Om, \; x_{2}<Q^{-p_{2}}\frac{p_{2}}{p_{1}}\gamma_{+}^{p_{2}-p_{1}}(x_{1}-1)+1 \}\cup \{x\in \Om, \; x_{1}>\gamma_{+}^{p_{1}} \}, \\
\Omega_{\textup{II}}&=\{x\in \Om, \;x_{2}>Q^{-p_{2}}\frac{p_{2}}{p_{1}}\gamma_{+}^{p_{2}-p_{1}}(x_{1}-1)+1,\; x_{2}>Q^{-p_{2}}\frac{p_{2}}{p_{1}}\gamma_{-}^{p_{2}-p_{1}}(x_{1}-1)+1, \\ & \; \; \; \; \; \; \; \; \; \; \; \; \; \; \;\;\;\;\;\;\;\;\;\;\;\;\;\;\;\;\;\;\;\;\;\;\;\;\;\;\;\;\;\;\;\;\;\;\;\;\;\;\;\;\;\;\;\;\;\;\;\;\;\; \gamma_{-}^{p_{1}}<x_{1}<\gamma_{+}^{p_{1}} \}, \\
\Omega_{\textup{III}}&=\{x\in \Om, \;x_{2}<Q^{-p_{2}}\frac{p_{2}}{p_{1}}\gamma_{-}^{p_{2}-p_{1}}(x_{1}-1)+1\},\\
\Omega_{\textup{IV}}&=\{x\in \Om, \;x_{2}>Q^{-p_{2}}\frac{p_{2}}{p_{1}}\gamma_{-}^{p_{2}-p_{1}}(x_{1}-1)+1, \; 0<x_{1}<\gamma_{-}^{p_{1}}\}.
\end{align*}
\begin{center}
\includegraphics[width=1\linewidth]{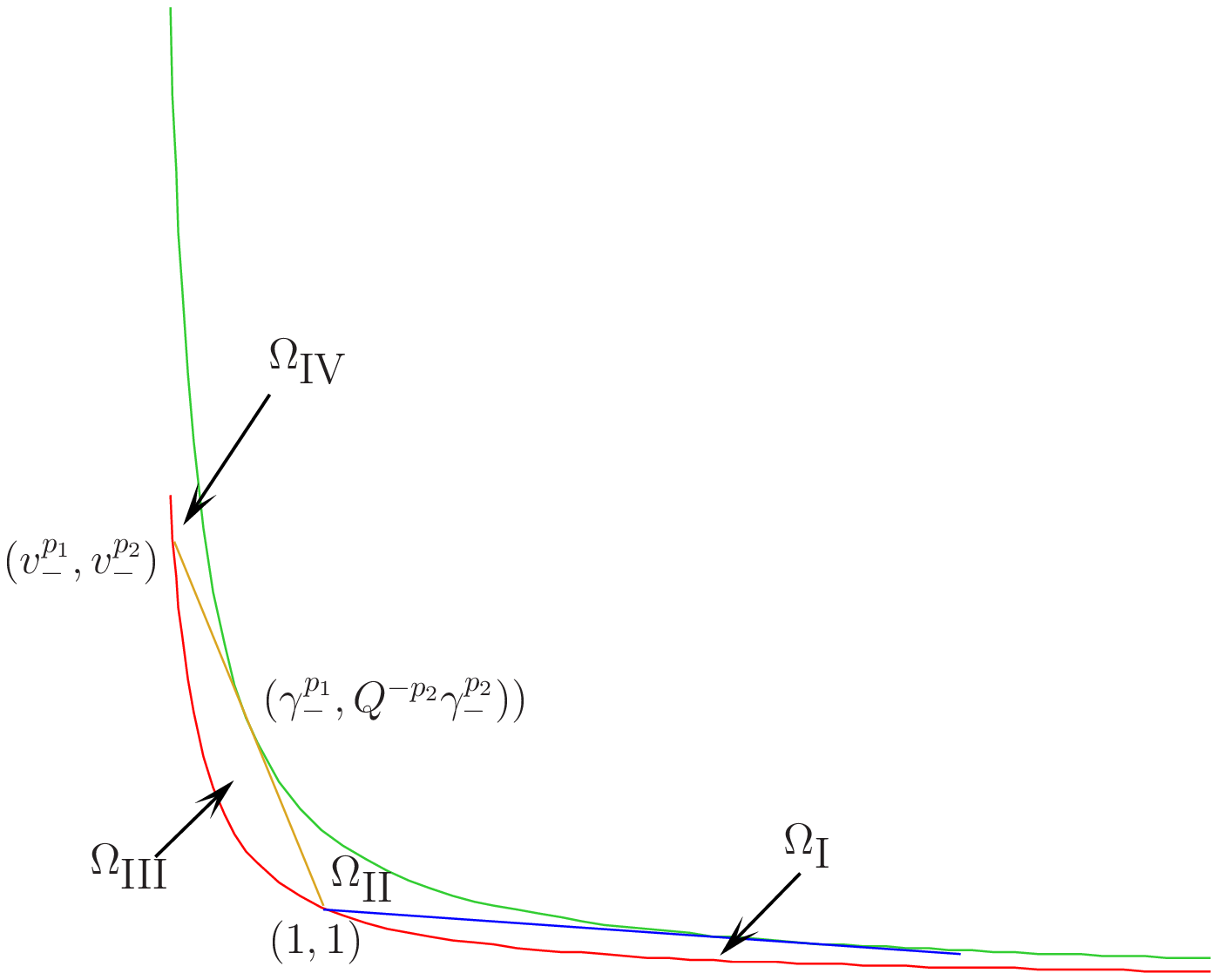}
\end{center}
\subsubsection{Case 3. $0>p_{1}>p_{2}$.}
\begin{align*}
\Omega_{\textup I}&=\{ x\in \Om, \; x_{2}<Q^{-p_{2}}\frac{p_{2}}{p_{1}}\gamma_{+}^{p_{2}-p_{1}}(x_{1}-1)+1 \}\cup \{x\in \Om, \; x_{1}<\gamma_{+}^{p_{1}} \}\\
\Omega_{\textup{II}}&=\{x\in \Om, \;x_{2}>Q^{-p_{2}}\frac{p_{2}}{p_{1}}\gamma_{+}^{p_{2}-p_{1}}(x_{1}-1)+1,\; x_{2}>Q^{-p_{2}}\frac{p_{2}}{p_{1}}\gamma_{-}^{p_{2}-p_{1}}(x_{1}-1)+1, \\ & \; \; \; \; \; \; \; \; \; \; \; \; \; \; \;\;\;\;\;\;\;\;\;\;\;\;\;\;\;\;\;\;\;\;\;\;\;\;\;\;\;\;\;\;\;\;\;\;\;\;\;\;\;\;\;\;\;\;\;\;\;\;\;\; \gamma_{+}^{p_{1}}<x_{1}<\gamma_{-}^{p_{1}} \},\\
\Omega_{\textup{III}}&=\{x\in \Om, \;x_{2}<Q^{-p_{2}}\frac{p_{2}}{p_{1}}\gamma_{-}^{p_{2}-p_{1}}(x_{1}-1)+1\},\\
\Omega_{\textup{IV}}&=\{x\in \Om,
\;x_{2}>Q^{-p_{2}}\frac{p_{2}}{p_{1}}\gamma_{-}^{p_{2}-p_{1}}(x_{1}-1)+1,\;
x_{1}>\gamma_{-}^{p_{1}}\}.\\
\end{align*}
\begin{center}
\includegraphics[width=1\linewidth]{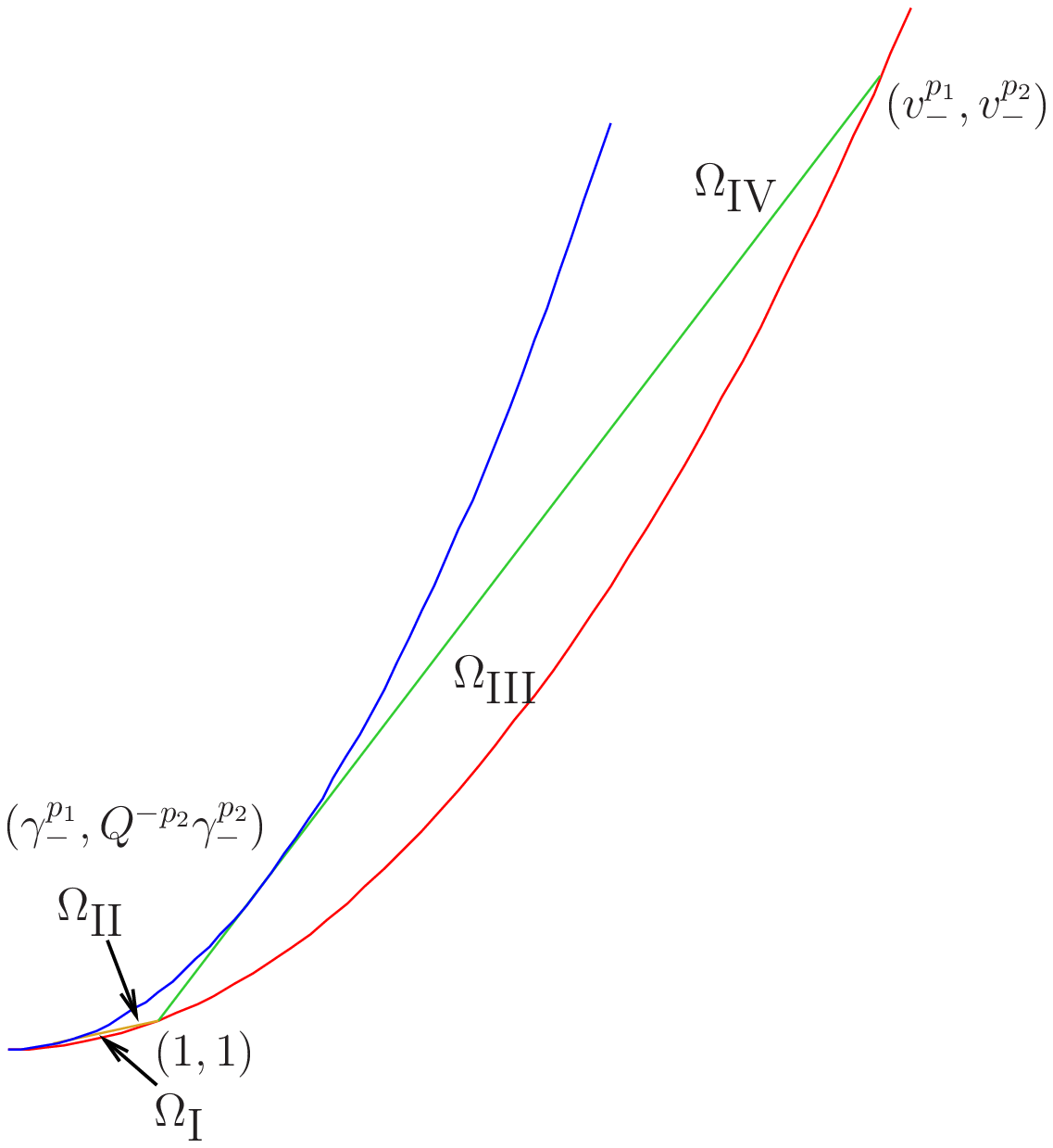}
\end{center}
%



\subsubsection{Motivation}\label{parmotiv}
We shall try to give some motivation why we choose these subdomains.

Let us make one heuristic observation. Assume we do something similar to what we did in the subsection \ref{sectlocconc}. Note that we want to make functions $w_{y}$ and $w_{z}$ as big as possible. Also note that we know extremal functions for every point on $\Gamma$, since there is only one acceptable function. Therefore, for given $x$, it is good to connect it with some point $(v^{p_{1}}, v^{p_{2}})\in\Gamma$ such that $v\geqslant 1$.

Take now $\Om_{\textup{I}}$. Geometrically, it consists of points $x$ with the following property: there exist two numbers $u,v\geqslant 1$ such that $x$ lies on the line segment, which connects $(u^{p_{1}}, u^{p_{2}})$ with $(v^{p_{1}}, v^{p_{2}})$, and this line segment lies in $\Om$. So it is a very good domain for us and we separate it.

Take now $\Om_{\textup{III}}$. Here we can not put $x$ on a line which connects two ``big'' points, but, however, we can connect $x$ with the point $(1,1)$, which is not bad, and with another point on $\Gamma$. That is why we separate this domain.

Note that for $x\in \Om_{\textup{II}}$ we can connect it with $(1,1)$ as well, but another end of the corresponding line segment will come to $\Gamma_{Q}$, which is bad, since we have no idea, which function will be extremal there.

Now we are left with $\Om_{\textup{II}}$ and $\Om_{\textup{IV}}$, and we do not split them anymore, because we do not have another motivation.
\subsection{On the dependence of $v$ on $x$}
In this section we denote a function $v$ as a function of $x$. Here is the definition.

If $x\in \Om_{\textup{III}}$ then take $v\not=1$ as a solution of the following equation:
$$
v^{p_{2}}(1-x_{1})-v^{p_{1}}(1-x_{2})=x_{2}-x_{1}.
$$
The geometrical meaning of this is easy. We take a line segment, which connects the point $x$ and the point $(1,1)$. We continue this line until it intersects $\Gamma$ one more time. The point of intersection is exactly $(v^{p_{1}}, v^{p_{2}})$. Note that for this $v$ we have $\sig(x_{1}-v^{p_{1}})=\sig(p_{1})$.

Next, take all $(v^{p_1}, v^{p_2})$ on a boundary of $\Om_{\textup{IV}}$ and a tangent line $\ell_{+}(v)$. These tangent lines cover all $\Om_{\textup{IV}}$, and they do not intersect. Therefore, for every $x\in \Om_{\textup{IV}}$ we
can find exactly one such $\ell_{+}(v)$. Notice that $\sig(x_{1}-v^{p_{1}})=\sig(p_{1})$.

Take this $\ell_{+}(v)$:
$$
x_{2}=Q^{-p_{2}}\frac{p_{2}}{p_{1}}\gamma_{+}^{p_{2}-p_{1}}v^{p_{2}-p_{1}}(x_{1}-v^{p_{1}})+v^{p_{2}}.
$$
Fix $x_{2}$ and consider $v$ as a function of $x_{1}$. We would like to find out the sign of $v^{\prime}_{x_{1}}$.
\begin{defin}
Take $v(x)$ as a solution of the following equation:
$$
\begin{cases} v^{p_{2}}(1-x_{1})-v^{p_{1}}(1-x_{2})=x_{2}-x_{1}, &x\in \Om_{\textup{III}} \\
              x_{2}=Q^{-p_{2}}\frac{p_{2}}{p_{1}}\gamma_{+}^{p_{2}-p_{1}}v^{p_{2}-p_{1}}(x_{1}-v^{p_{1}})+v^{p_{2}}, &x\in \Om_{\textup{IV}}
\end{cases},
$$
and such that $\sig(x_{1}-v^{p_{1}})=\sig(p_{1})$.
\end{defin}
Our goal is the following lemma:
\begin{lemma}
For every $x$ the following is true: $\sig(v^{\prime}_{x_{1}})=-\sig(p_{1})$.
\end{lemma}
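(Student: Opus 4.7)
The plan is to handle the two regions by implicit differentiation and then read off the sign after a short algebraic simplification that uses the defining identity for $\gamma_\pm$ from Lemma \ref{twogammas}.

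For $x\in \Om_{\textup{III}}$, differentiating $v^{p_2}(1-x_1)-v^{p_1}(1-x_2)=x_2-x_1$ with respect to $x_1$ (with $x_2$ fixed) yields
$$[p_2 v^{p_2-1}(1-x_1)-p_1 v^{p_1-1}(1-x_2)]\,v'_{x_1}=v^{p_2}-1.$$
The defining equation of $v$ expresses collinearity of $(1,1)$, $x$, and $(v^{p_1},v^{p_2})$, and by construction $x$ lies between the two other points, so I can write $1-x_k=\mu(1-v^{p_k})$ for a common $\mu\in(0,1]$. This factors the bracket as $\mu v^{p_1+p_2-1}h(v)$ with $h(v)=(p_1-p_2)+p_2 v^{-p_1}-p_1 v^{-p_2}$. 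A short computation gives $h(1)=0$ and $h'(v)=p_1p_2(v^{-p_2-1}-v^{-p_1-1})$, so $h$ attains a unique extremum at $v=1$ whose type (min if $p_1p_2>0$, max otherwise) yields $\sig h(v)=\sig(p_1p_2)$ for $v\neq 1$. Separately, since $x_1$ lies between $1$ and $v^{p_1}$, the assumption $\sig(x_1-v^{p_1})=\sig(p_1)$ forces $\sig(1-v^{p_1})=\sig(p_1)$ and hence $v<1$ (regardless of the sign of $p_1$), from which $\sig(v^{p_2}-1)=-\sig(p_2)$. Combining, $\sig(v'_{x_1})=-\sig(p_2)/\sig(p_1p_2)=-\sig(p_1)$.

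For $x\in \Om_{\textup{IV}}$, I differentiate the tangent-line equation and set $C:=Q^{-p_2}\frac{p_2}{p_1}\gamma_+^{p_2-p_1}$. The resulting denominator splits into a term proportional to $x_1-v^{p_1}$ and a term proportional to $p_2-Cp_1$, whose signs are in general opposite; the key step is to collapse them using the equation for $\gamma_+$ rewritten as
$$1-C=Q^{-p_2}\frac{p_1-p_2}{p_1}\gamma_+^{p_2}.$$
After substitution the $C$-dependence concentrates in a single factor, and one obtains the clean formula
$$v'_{x_1}=\frac{v}{(p_1-p_2)(x_1-a_+^{p_1})},\qquad a_+=\gamma_+ v.$$
Since $p_1>p_2$ and $v>0$, the sign of $v'_{x_1}$ coincides with that of $x_1-a_+^{p_1}$, which equals $-\sig(p_1)$ by Remark \ref{zamechanie}.

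The main obstacle in both cases is algebraic rather than conceptual. In $\Om_{\textup{III}}$ it is spotting the factorization $\mu v^{p_1+p_2-1}h(v)$ and then running the one-variable monotonicity argument for $h$ to determine its sign on both sides of $v=1$. In $\Om_{\textup{IV}}$ it is using the $\gamma_+$-identity to gather the two conflicting terms of the denominator into a single factor $x_1-a_+^{p_1}$, which is precisely the quantity for which Remark \ref{zamechanie} supplies the sign.
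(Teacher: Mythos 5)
Your proof is correct and follows essentially the same route as the paper: in $\Om_{\textup{III}}$ your factorization $\mu v^{p_1+p_2-1}h(v)$ together with the sign analysis of $h$ is exactly the paper's $h_1$-argument (your $\mu$ is the paper's $\frac{x_2-1}{v^{p_2}-1}$), and in $\Om_{\textup{IV}}$ your simplified formula $v'_{x_1}=\frac{v}{(p_1-p_2)(x_1-a_+^{p_1})}$ is algebraically identical to the paper's $v'_{x_1}=\frac{1}{p_1\Pi}\cdot\frac{A}{v^{p_1}}$ after substituting the expression for $p_1\Pi$, with the same two inputs (the $\gamma_+$ identity and Remark \ref{zamechanie}). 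No gaps.
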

\paragraph{Case 1: $x\in \Om_{\textup{IV}}$.}

\begin{lemma}\label{depend}
Take $x\in \Om_{\textup{IV}}$. Denote
$$
A=Q^{-p_{2}}\gamma_{+}^{p_{2}-p_{1}},
$$
$$
\Pi=\frac{Ax_{1}}{v^{p_{1}+1}}-\frac{x_{2}}{v^{p_{2}+1}}.
$$
Then
$$
v^{\prime}_{x_{1}}=\frac{1}{p_{1}\Pi}\cdot\frac{A}{v^{p_{1}}},
$$
$$
v^{\prime}_{x_{2}}=-\frac{1}{p_{2}\Pi}\cdot\frac{1}{v^{p_{2}}}.
$$
Moreover, we always have $\Pi<0$ and therefore
$$
\sig(v^{\prime}_{x_{1}})=-\sig(p_{1}), \qquad \qquad \sig(v^{\prime}_{x_2})=\sig(p_2).
$$
\end{lemma}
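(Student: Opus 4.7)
The plan is to apply the implicit function theorem to the defining equation of $v$ on $\Om_{\textup{IV}}$, and then use the definition of $\gamma_+$ together with Lemma \ref{estimate} to pin down the sign of the resulting denominator.

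First I would rewrite the tangent equation $x_2 = \frac{p_2}{p_1}A v^{p_2-p_1}(x_1-v^{p_1})+v^{p_2}$ in the more convenient linear form
\[
G(x_1,x_2,v) := \frac{p_2 A v^{p_2-p_1}}{p_1}\,x_1 - x_2 - \frac{(p_2 A - p_1)\,v^{p_2}}{p_1} = 0,
\]
so that $G_{x_1} = \frac{p_2 A v^{p_2-p_1}}{p_1}$ and $G_{x_2} = -1$. Differentiating in $v$ produces three terms; the main technical point --- and the step I expect to be the principal obstacle --- is that one must use the constraint $G=0$ to replace $(p_2 A - p_1)v^{p_2}$ in $G_v$, after which a sizable cancellation collapses everything cleanly to
\[
G_v = -p_2\, v^{p_2}\,\Pi.
\]
The two formulas of the lemma then follow immediately from $v'_{x_i} = -G_{x_i}/G_v$, together with $v^{p_2-p_1}/v^{p_2} = 1/v^{p_1}$.

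It remains to verify $\Pi < 0$. Along the tangent $\ell_+(v)$, $\Pi\,v^{p_1+1}$ is an \emph{affine} function of $x_1$, and I would evaluate it at the two natural endpoints of the tangent segment. At $x_1 = v^{p_1}$ (where $x_2 = v^{p_2}$) a direct calculation gives $\Pi v^{p_1+1} = v^{p_1}(A-1)$, which is strictly negative since $A = Q^{-p_2}\gamma_+^{p_2-p_1} < 1$ by Lemma \ref{estimate}. At $x_1 = a_+^{p_1} = \gamma_+^{p_1} v^{p_1}$, where $\ell_+(v)$ touches $\Gamma_Q$, the defining identity \eqref{gamma} of $\gamma_+$ forces the same expression to vanish exactly. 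By affinity, $\Pi v^{p_1+1}$ is strictly negative at every interior point of the segment; Remark \ref{zamechanie} guarantees that each $x\in \Om_{\textup{IV}}$ lies strictly between the endpoints (with $v^{p_1} < x_1 < a_+^{p_1}$ or the reverse, according to $\sig(p_1)$). Since $v>0$, this gives $\Pi<0$, and the sign assertions $\sig(v'_{x_1}) = -\sig(p_1)$ and $\sig(v'_{x_2}) = \sig(p_2)$ then follow from the explicit formulas, using $A>0$ and $v>0$.
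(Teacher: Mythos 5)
Your proposal is correct and follows essentially the same route as the paper: the derivative formulas come from implicit differentiation of the tangent equation (your cancellation giving $G_v=-p_2v^{p_2}\Pi$ after substituting the constraint is exactly the computation the paper performs), and the sign of $\Pi$ rests on the same two inputs, the defining identity \eqref{gamma} of $\gamma_+$ and Remark \ref{zamechanie}. The only cosmetic difference is in packaging: the paper factors $p_1\Pi$ in closed form as a positive multiple of $(p_1-p_2)\bigl(\tfrac{x_1}{a_+^{p_1}}-1\bigr)$ and reads off the sign directly, whereas you evaluate the affine function $\Pi v^{p_1+1}$ at the two endpoints of the tangent segment (which is where Lemma \ref{estimate} enters for you, encoding the value $v^{p_1}(A-1)$ at the $\Gamma_1$ endpoint); both arguments share the same tacit convention that $x$ is not the point of tangency on $\Gamma_Q$, where $\Pi$ would vanish.
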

\begin{proof}[Proof]
Using the definition of $A$, rewrite the equation for $v$ in the following form:
$$
x_{2}=\frac{p_{2}}{p_{1}}Av^{p_{2}-p_{1}}(x_{1}-v^{p_{1}})+v^{p_{2}}=\frac{p_{2}}{p_{1}}Av^{p_{2}-p_{1}}x_{1}-\frac{p_{2}}{p_{1}}Av^{p_{2}}+v^{p_{2}},
$$
so
$$
\frac{x_{2}}{v^{p_{2}}}=\frac{p_{2}}{p_{1}}A\frac{x_{1}}{v^{p_{1}}}-\left(\frac{p_{2}}{p_{1}}A-1\right).
$$
Now let us take the partial derivative $\frac{\partial}{\partial x_{1}}$
$$
-p_{2}\frac{x_{2}}{v^{p_{2}+1}}v^{\prime}_{x_{1}}=\frac{p_{2}}{p_{1}}A\left(\frac{1}{v^{p_{1}}}-\frac{p_{1}x_{1}}{v^{p_{1}+1}}v^{\prime}_{x_{1}}\right),
$$
therefore,
$$
v^{\prime}_{x_{1}}\left(\frac{Ax_{1}}{v^{p_{1}+1}}-\frac{x_{2}}{v^{p_{2}+1}}\right)=\frac{A}{p_{1}v^{p_{1}}}.
$$
One can get the result for $v^{\prime}_{x_{2}}$ similarly.
From the equation
$$
\frac{x_{2}}{v^{p_{2}}}=\frac{p_{2}}{p_{1}}A\frac{x_{1}}{v^{p_{1}}}-\left(\frac{p_{2}}{p_{1}}A-1\right).
$$
we get that
$$
\frac{Ax_{1}}{v^{p_{1}}}-\frac{x_{2}}{v^{p_{2}}}=\frac{Ax_{1}}{v^{p_{1}}}(1-\frac{p_{2}}{p_{1}})+(\frac{p_{2}}{p_{1}}A-1).
$$
From \eqref{gamma} we see that
$$
1-\frac{p_{2}}{p_{1}}A=Q^{-p_{2}}(1-\frac{p_{2}}{p_{1}})\gamma_{+}^{p_{2}},
$$
so
\begin{multline}
$$
\frac{Ax_{1}}{v^{p_{1}}}-\frac{x_{2}}{v^{p_{2}}}=\left(1-\frac{p_{2}}{p_{1}}\right)\left(\frac{Q^{-p_{2}}\gamma_{+}^{p_{2}-p_{1}}x_{1}}{v^{p_{1}}}-Q^{-p_{2}}\gamma_{+}^{p_{2}}\right)=\\
= Q^{-p_{2}}\gamma_{+}^{p_{2}}\left(1-\frac{p_{2}}{p_{1}}\right)\left(\frac{x_{1}}{(\gamma_{+}v)^{p_{1}}}-1\right).
$$
\end{multline}
Observe that $\gamma_{+}v=a_{+}$ and from Remark \ref{zamechanie} we know that $\sig(x_{1}-a_+^{p_{1}})=-\sig(p_{1})$. Therefore, the equation
$$
p_{1}\Pi=\frac{p_{1}}{v}\left(\frac{Ax_{1}}{v^{p_{1}}}-\frac{x_{2}}{v^{p_{2}}}\right)=\frac{1}{v}Q^{-p_{2}}\gamma_{+}^{p_{2}}(p_{1}-p_{2})\left(\frac{x_{1}}{a_+^{p_{1}}}-1\right)
$$
finishes the proof.
\end{proof}
\begin{zamech}
One can draw a picture, take a point $x$, move it a little bit to the right and observe that $v^{p_{1}}$ decreased. It exactly means that $v$ acts as said in Lemma. We give a picture for the case $p_1>p_2>0$.
\end{zamech}
\begin{center}
\begin{center}
\includegraphics[width=0.9\linewidth]{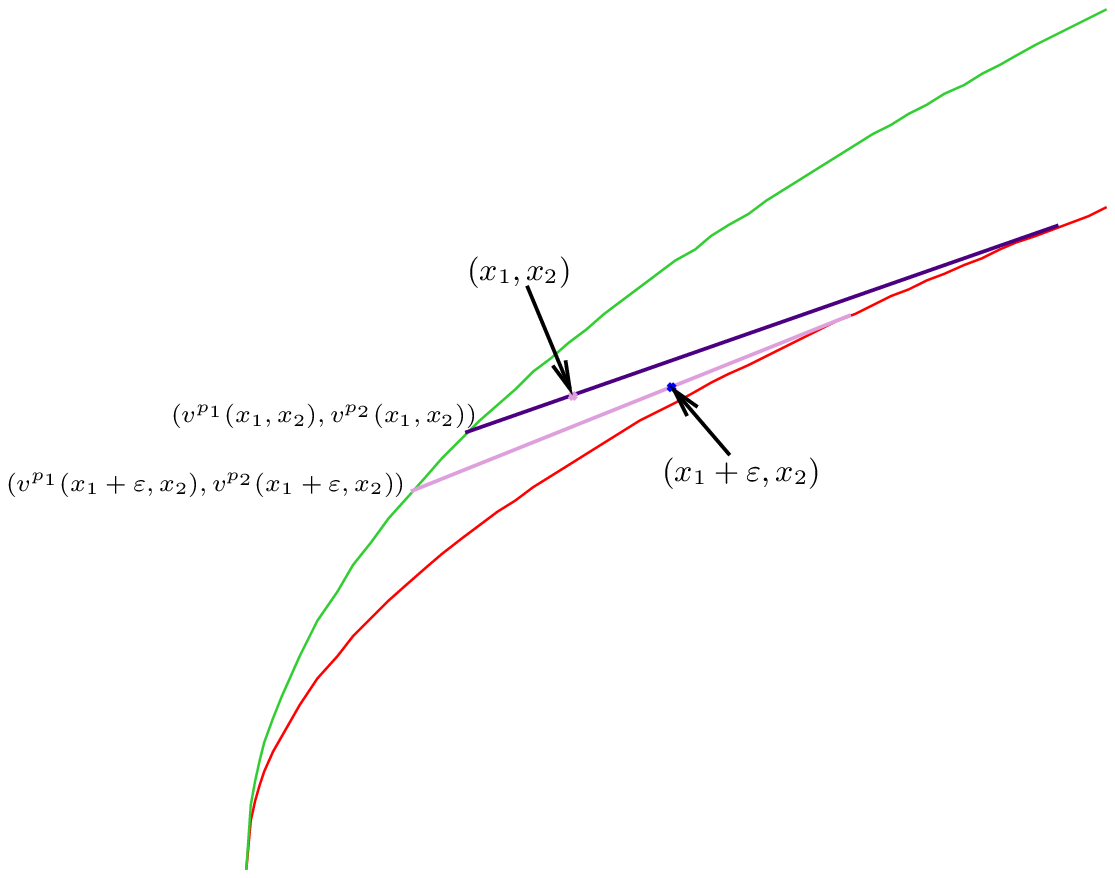}
\end{center}
\end{center}
\begin{zamech}
We would like to warn the reader about one thing: sometimes it is not always sufficient to consider a picture only for $p_{1}>p_{2}>0$. This picture was just an example of what one should draw to be convinced that our proposition is true.
\end{zamech}
\paragraph{Case 2: $x\in\Om_{\textup{III}}$.}
We would like to do the same thing as before: study the sign of $v^{\prime}_{x_{1}}$. Note that it is obvious from the picture that $v^{\prime}_{x_{1}}$ behaves absolutely in the same way as before. But we shall prove it analytically.
\begin{lemma}
Let $v$ be as above, $x\in \Om_{\textup{III}}$. Then $\sig{v^{\prime}}_{x_{1}}=-\sig{p_{1}}$.
\end{lemma}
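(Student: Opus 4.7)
The plan is implicit differentiation plus sign bookkeeping, mirroring the proof of Lemma~\ref{depend}. First I rewrite the defining equation symmetrically as $(v^{p_1}-1)(1-x_2) = (v^{p_2}-1)(1-x_1)$, which simply states that $(1,1)$, $x$, and $(v^{p_1},v^{p_2})$ are collinear. Parameterizing $x$ along this chord via $t = (1-x_1)/(1-v^{p_1}) = (1-x_2)/(1-v^{p_2})$, we have $1 - x_i = t(1 - v^{p_i})$. The geometric motivation of Subsection~\ref{parmotiv} tells us that for $x \in \Om_{\textup{III}}$ the chord from $(1,1)$ to $(v^{p_1},v^{p_2})$ lies in $\Om$ and contains $x$ strictly between its endpoints, so $t \in (0,1)$; in particular $t > 0$.

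Implicit differentiation together with the substitution above yields
$$v'_{x_1} = \frac{1 - v^{p_2}}{t\,H(v)}, \qquad H(v) := p_1 v^{p_1-1}(1 - v^{p_2}) - p_2 v^{p_2-1}(1 - v^{p_1}).$$
Using the identity $1 - v^{p_i} = p_i \ili_v^1 s^{p_i-1}\,ds$, I rewrite
$$H(v) = p_1 p_2\, v^{p_2-1}\ili_v^1 s^{p_2-1}\bigl(v^{p_1-p_2} - s^{p_1-p_2}\bigr)\,ds.$$
Since $p_1 - p_2 > 0$, the factor $v^{p_1-p_2} - s^{p_1-p_2}$ has sign opposite to $s - v$; combined with the orientation of $[v,1]$, this gives $\sig H(v) = -\sig(p_1 p_2)$ for every $v > 0$ with $v \ne 1$.

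The last ingredient is that $v < 1$ on $\Om_{\textup{III}}$. Indeed, $t \in (0,1)$ forces $x_1 = 1 + t(v^{p_1}-1)$ to lie strictly between $1$ and $v^{p_1}$; combined with the convention $\sig(x_1 - v^{p_1}) = \sig(p_1)$ in the definition of $v$, this gives $\sig(v^{p_1}-1) = -\sig(p_1)$, which in both cases $p_1 > 0$ and $p_1 < 0$ is equivalent to $v < 1$. Therefore $\sig(1 - v^{p_2}) = \sig(p_2)$, and
$$\sig(v'_{x_1}) = \frac{\sig(p_2)}{(+1)\cdot\bigl(-\sig(p_1)\sig(p_2)\bigr)} = -\sig(p_1).$$

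The main technical hurdle is the uniform sign of $H$: a local Taylor expansion at $v=1$ would only pin down the sign near that point, and then one would have to separately rule out further zeros of $H$ in $(0,\infty)$. The integral representation handles all $v$ at once via a one-line monotonicity argument. Everything else is symbolic bookkeeping, structurally parallel to Lemma~\ref{depend} with the chord through $(1,1)$ playing the role there played by the tangent $\ell_+(v)$ to $\Gamma_Q$.
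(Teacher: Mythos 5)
Your proof is correct and follows essentially the same route as the paper: implicit differentiation of the collinearity relation between $(1,1)$, $x$, and $(v^{p_1},v^{p_2})$, followed by a sign analysis of the factor $H(v)=p_1v^{p_1-1}(1-v^{p_2})-p_2v^{p_2-1}(1-v^{p_1})$, which is (up to the positive factor $v^{p_1+p_2-1}(v^{p_2}-1)^{-2}$ and a sign) the paper's $h_1(v)$. The only cosmetic difference is that you fix $\sig H=-\sig(p_1p_2)$ via the integral representation $1-v^{p_i}=p_i\ili_v^1 s^{p_i-1}\,ds$, whereas the paper gets the equivalent fact from $h_1(1)=0$ together with the sign of $h_1'$; both are sound, and your explicit derivation of $v<1$ from the convention $\sig(x_1-v^{p_1})=\sig(p_1)$ is if anything slightly more careful than the paper's ``Clearly, $\sig(x_2-1)=-\sig(p_2)$''.
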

\begin{proof}[Proof]
We do the following trick. Obviously,
$$
x_{1}-1=\frac{v^{p_{1}}-1}{v^{p_{2}}-1}(x_{2}-1).
$$
Therefore, $1=(x_2-1)h^{\prime}(v) v^{\prime}_{x_1}$, where
$$
h(v)=\frac{v^{p_{1}}-1}{v^{p_{2}}-1}.
$$
Deriving, we get
\begin{multline*}
h^{\prime}(v)=\frac{p_{1}v^{p_{1}-1}(v^{p_{2}}-1)-p_{2}v^{p_{2}-1}(v^{p_{1}}-1)}{(v^{p_{2}}-1)^{2}}=\\=\frac{v^{p_{1}+p_{2}-1}}{(v^{p_{2}}-1)^{2}}(p_{1}-p_{2}+p_{2}v^{-p_{1}}-p_{1}v^{-p_{2}})=\frac{v^{p_{1}+p_{2}-1}}{(v^{p_{2}}-1)^{2}}h_{1}(v),
\end{multline*}
where
$$
h_{1}(v)=p_{1}-p_{2}+p_{2}v^{-p_{1}}-p_{1}v^{-p_{2}},
$$
thus
$$
\sig(v^{\prime}_{x_1}) = \sig(x_2-1) \sig(h_1(v)).
$$
Clearly, $\sig(x_2-1)=-\sig(p_2)$, and we only need to find $\sig(h_1(v))$. Since
$$
h_{1}^{\prime}(v)=-p_{2}p_{1}v^{-p_{1}-1}+p_{1}p_{2}v^{-p_{2}-1}=p_{1}p_{2}v^{-p_{2}-1}(1-v^{p_{2}-p_{1}}),
$$
we have $\sig(h_1^{\prime})=-\sig(p_1 p_2)$. Note that $h_1(1)=0$, whence $\sig(h_{1}(v))=\sig(p_{1}p_{2})$. Therefore, $\sig(v^{\prime}_{x_{1}})=-\sig(p_{1})$.

%
%
%
%
%
%
\end{proof}
\subsection{On the local concavity and derivatives in the sense of generalized functions.}\label{special}
In this paragraph we are going to discuss the following question. Assume $B$ is not smooth, but still locally concave. How to express it in the sense of derivatives? The answer is easy: we must demand $\frac{d^{2}B}{dx^{2}}\leqslant 0$ in the sense of generalized functions. More precisely, the following theorem is true:
\begin{theorem}
Function $B$ is locally concave in $\Om$ if and only if for every smooth function $\vf\geqslant 0$ with a compact support in the interior of $\Om$, and for every $\Delta_{1}, \Delta_{2}\in \R$ the following inequality holds:
$$
\ili B(x)\left[\vf^{\prime\prime}_{x_{1}x_{1}}\Delta_{1}^{2}+2\vf^{\prime\prime}_{x_{1}x_{2}}\Delta_{1}\Delta_{2}+\vf^{\prime\prime}_{x_{2}x_{2}}\Delta_{2}^{2}\right]dx\leqslant 0.
$$
\end{theorem}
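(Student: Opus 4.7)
The plan is a standard mollification argument, run in both directions. Fix a radially symmetric nonnegative mollifier $\rho\in C_c^\infty(\mathbb{R}^2)$ with $\int\rho=1$, put $\rho_\epsilon(x)=\epsilon^{-2}\rho(x/\epsilon)$, and set $B_\epsilon=B*\rho_\epsilon$ on $\Omega_\epsilon=\{x\in\Omega:\mathrm{dist}(x,\partial\Omega)>\epsilon\}$. We use that $B$ is locally bounded on $\Omega$ (automatic in our setting since $0\leq\mathcal{B}\leq 1$), so $B_\epsilon\to B$ in $L^1_{\mathrm{loc}}(\Omega)$ as $\epsilon\to 0$.

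For necessity, assume $B$ is locally concave. For $x,y\in\Omega_\epsilon$ with $[x,y]\subset\Omega_\epsilon$ and $\mu\in[0,1]$,
$$B_\epsilon\bigl(\mu x+(1-\mu)y\bigr)=\int\rho_\epsilon(z)B\bigl(\mu(x-z)+(1-\mu)(y-z)\bigr)dz\geq\mu B_\epsilon(x)+(1-\mu)B_\epsilon(y),$$
by $\rho_\epsilon\geq 0$ and local concavity of $B$ on a suitably chosen convex subset of $\Omega$. Since $B_\epsilon$ is smooth, this forces $\langle D^2B_\epsilon(x)\vec\Delta,\vec\Delta\rangle\leq 0$ pointwise on $\Omega_\epsilon$ for every $\vec\Delta=(\Delta_1,\Delta_2)$. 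For any $\varphi\geq 0$ in $C_c^\infty(\mathrm{int}\,\Omega)$, $\mathrm{supp}\,\varphi\subset\Omega_\epsilon$ for all small $\epsilon$; integrating by parts twice gives
$$\int B_\epsilon\bigl[\varphi_{x_1x_1}\Delta_1^2+2\varphi_{x_1x_2}\Delta_1\Delta_2+\varphi_{x_2x_2}\Delta_2^2\bigr]dx=\int\langle D^2B_\epsilon\vec\Delta,\vec\Delta\rangle\varphi\,dx\leq 0,$$
and letting $\epsilon\to 0$ using $L^1_{\mathrm{loc}}$ convergence of $B_\epsilon$ to $B$ yields the claimed distributional inequality.

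For sufficiency, assume the distributional inequality. Given smooth $\psi\geq 0$ with compact support in $\Omega_\epsilon$, $\psi*\rho_\epsilon$ is nonnegative, smooth, and compactly supported in $\Omega$. Plugging $\varphi=\psi*\rho_\epsilon$ into the hypothesis and using Fubini together with the fact that differentiation commutes with convolution,
$$\int\langle D^2B_\epsilon\vec\Delta,\vec\Delta\rangle\psi\,dx=\int B\,\langle D^2(\psi*\rho_\epsilon)\vec\Delta,\vec\Delta\rangle dx\leq 0.$$
Since $\psi\geq 0$ is arbitrary and $\langle D^2B_\epsilon\vec\Delta,\vec\Delta\rangle$ is continuous, this forces $\langle D^2B_\epsilon(y)\vec\Delta,\vec\Delta\rangle\leq 0$ pointwise on $\Omega_\epsilon$ for every $\vec\Delta$; hence $B_\epsilon$ is smooth and concave on $\Omega_\epsilon$. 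Fix $x\in\Omega$ and a convex neighborhood $U\subset\Omega$ of $x$. For each $y\in U$ the segment $[x,y]$ is compact in $U$, so $[x,y]\subset\Omega_\epsilon$ for all sufficiently small $\epsilon$, and therefore $B_\epsilon(\mu x+(1-\mu)y)\geq\mu B_\epsilon(x)+(1-\mu)B_\epsilon(y)$ for all $\mu\in[0,1]$. Passing $\epsilon\to 0$ using continuity of $B$ at the three points involved delivers the local concavity of $B$.

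The main obstacle is the pointwise-everywhere passage to the limit in the sufficiency direction: $B_\epsilon\to B$ in $L^1_{\mathrm{loc}}$ only yields the concavity inequality for almost every pair $(x,y)\in U\times U$, and upgrading it to every pair requires continuity (or at least upper semicontinuity) of $B$ on the interior of $\Omega$. This is harmless for the Bellman candidate constructed in the rest of the paper, which will be continuous by construction, but it is a regularity hypothesis implicitly present in the theorem as stated.
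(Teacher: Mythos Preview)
The paper does not supply a proof of this theorem; it is stated as a standard background fact and then immediately used. Your mollification argument is the canonical way to prove such a statement, and both directions are carried out correctly.

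A couple of remarks on the fine points you raise. In the necessity direction you do not actually need to assume local boundedness separately: local concavity on an open set already forces continuity (hence local integrability) in the interior, because a concave function on an open convex set is automatically continuous there, and every interior point of $\Omega$ has a convex open neighborhood contained in $\Omega$. So that direction is unconditional.

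Your flag on the sufficiency direction is exactly right and worth keeping. The distributional hypothesis is insensitive to modification of $B$ on null sets, while the pointwise inequality in the definition of local concavity is not; hence some regularity of $B$ is genuinely needed to close the argument. Continuity is the natural hypothesis, and it is precisely what the paper's candidate $B$ enjoys by construction (it is $C^2$ on each $\Omega_{\mathrm{I}}$--$\Omega_{\mathrm{IV}}$ and continuous across the tangent lines $\ell_\pm$). With continuity in hand, $B_\epsilon\to B$ locally uniformly, and the passage to the limit at the three points $x$, $y$, $\mu x+(1-\mu)y$ is immediate. So your proof matches what the paper needs and implicitly assumes, and your caveat is the honest way to state the theorem in full generality.
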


Our next procedure is the following: we take one of the integrals above, for example $\ili B(x)\vf^{\prime\prime}_{x_{1}x_{1}}(x)dx$, and perform an integration by parts. While doing that, we assume $B$ is continuous and $B_{x_{1}}$ is not, but we assume that it consists of two functions, which are differentiable.

Our motivation is the following: we are going to find $B$ which is twice differentiable in interiors of $\Om_{\textup{I}}, \Om_{\textup{II}}, \Om_{\textup{III}}$ and $\Om_{\textup{IV}}$, and which is continuous through $\ell_{\pm}$. However, first derivatives of $B$ will not be continuous in $\ell_{\pm}$, and we want to catch the influence of its jump on the integral above.

We state the following lemma, where $F$ plays the role of a derivative of $B$:
\begin{lemma}\label{specialfunctions}
Let $F(x_{1}, x_{2})=\begin{cases} f_{1}(x_{1}, x_{2}), &x_{2}\geqslant kx_{1}+m \\ f_{2}(x_{1}, x_{2}), &x_{2}< kx_{1}+m\end{cases}$. Let $\varphi$ be a smooth function with compact support. By $(f, \vf)$ we denote the action of the functional $f$ on the function $\vf$. Then, considering $F_{x_{1}}$ and $F_{x_{2}}$ as generalized functions, we get
$$
(F_{x_{2}}, \varphi)=\ili\ili F_{x_{2}}\varphi dx_{1}dx_{2} + \ili_{\R}(f_{1}(x_{1}, kx_{1}+m)-f_{2}(x_{1}, kx_{1}+m))\varphi(x_{1}, kx_{1}+m)dx_{1},
$$
and
$$
(F_{x_{1}}, \varphi)=\ili\ili F_{x_{1}}\varphi dx_{1}dx_{2}+k\ili_{\R} (f_{2}(x_{1}, kx_{1}+m)-f_{1}(x_{1}, kx_{1}+m))\varphi(x_{1}, kx_{1}+m)dx_{1}.
$$
\end{lemma}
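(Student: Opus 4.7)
The plan is to apply the distributional definition $(F_{x_i},\varphi)=-\ili\ili F\,\vf_{x_i}\,dx_1dx_2$, split the integral over the two half-planes $R_1=\{x_2>kx_1+m\}$ and $R_2=\{x_2<kx_1+m\}$ on which $F$ coincides with the smooth functions $f_1$ and $f_2$, and then integrate by parts in each piece. Since $\vf$ has compact support, no boundary term survives at infinity, so everything reduces to computing the contribution picked up at the interior interface $\{x_2=kx_1+m\}$.

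For the $F_{x_2}$ identity I would slice each region by vertical lines of fixed $x_1$ and integrate by parts in $x_2$. On $R_1$ the inner $x_2$-integral runs from $kx_1+m$ to $+\infty$ and produces the boundary value $f_1(x_1,kx_1+m)\vf(x_1,kx_1+m)$ together with the classical interior term; on $R_2$ the inner $x_2$-integral runs from $-\infty$ to $kx_1+m$ and produces $-f_2(x_1,kx_1+m)\vf(x_1,kx_1+m)$ together with its own classical interior term. Adding the two and integrating in $x_1$ yields exactly the claimed jump term $\ili_{\R}(f_1-f_2)\vf\,dx_1$ on top of the naive pointwise integral $\ili\ili F_{x_2}\vf\,dx_1dx_2$.

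For the $F_{x_1}$ identity it is cleaner not to slice in $x_1$ (which is awkward when $k$ is negative or zero), but instead to apply the divergence theorem to the vector field $(f_i\vf,0)$ on each region; equivalently one can change variables to $(u,v)=(x_1,x_2-kx_1-m)$ to straighten the interface. Along $\{x_2=kx_1+m\}$ the outward unit normal to $R_1$ is $(k,-1)/\sqrt{1+k^2}$ and the arc-length element is $\sqrt{1+k^2}\,dx_1$, so the boundary contribution from $R_1$ picks up a factor of $k$; the contribution from $R_2$ is the same with the opposite sign. Summing produces $k\ili_{\R}(f_2-f_1)\vf\,dx_1$ on top of $\ili\ili F_{x_1}\vf\,dx_1dx_2$, which is the second identity.

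The argument is essentially mechanical and I do not expect a genuine obstacle; the only finicky part is keeping track of the sign and of the factor of $k$ in the $F_{x_1}$ formula. The clean way to see why only the $x_1$-identity picks up the slope is that along a line of slope $k$ the outward unit normal has $x_1$-component $\pm k/\sqrt{1+k^2}$ while its $x_2$-component is $\pm 1/\sqrt{1+k^2}$; after multiplication by the arc-length element $\sqrt{1+k^2}\,dx_1$ these become $\pm k$ and $\pm 1$, which accounts precisely for the discrepancy between the two formulas.
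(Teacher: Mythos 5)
Your argument is correct, and it is precisely the ``pure integration by parts'' that the paper declines to write out: the signs and the factor of $k$ in the $F_{x_1}$ identity come out right from your normal-vector bookkeeping (outward normal $(k,-1)/\sqrt{1+k^2}$ to $R_1$ with arc-length element $\sqrt{1+k^2}\,dx_1$). No discrepancy with the paper's intended proof.
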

Proof of this lemma is a pure integration by parts and we avoid it.

We also make the following obvious remark.
\begin{zamech}
Take $F$ as above and assume $f_{1}(x_{1}, kx_{1}+m)$ is a constant $f_{1}$ and $f_{2}(x_{1}, kx_{1}+m)$ is a constant $f_{2}$. Then
$$
(F_{x_{1}}, \vf)=\ili F_{x_{1}}\vf dx + k(f_{2}-f_{1})\ili \vf(x_{1}, kx_{1}+m)dx_{1};
$$
$$
(F_{x_{2}}, \vf)=\ili F_{x_{2}}\vf dx +(f_{1}-f_{2})\ili \vf(x_{1}, kx_{1}+m)dx_{1}.
$$
\end{zamech}

Finally, denote $\Phi_{\pm}(\vf)=\ili \vf(x_{1}, Q^{-p_{2}}\frac{p_{2}}{p_{1}}\gamma_{\pm}^{p_{2}-p_{1}}(x_{1}-1)+1)dx_{1}$. We simply integrate $\vf$ over $\ell_{\pm}$. Note that if $\vf\geqslant 0$ then $\Phi_{\pm}(\vf)\geqslant 0$.

We are going to express the derivatives of $B$ in terms of functionals $\Phi_{\pm}$ and in terms of not generalized derivatives.

To simplify our calculations we shall state a technical lemma. It says that to check that the ``jump'' matrix is nonpositive it is sufficient to check that the derivative is nonpositive only for one direction, which is not parallel to $\ell_{\pm}$.
\begin{lemma}
Let $B$ be as above. To check that the Hessian is a nonpositive general function it is sufficient to check that in interiors of $\Om_{\textup{I}}$---$\Om_{\textup{IV}}$ the Hessian is nonpositive and that $B_{x_{2}x_{2}}\leqslant 0$ as a generalized function.
\end{lemma}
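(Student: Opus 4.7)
The plan is to split the generalized Hessian of $B$ into a regular part (the pointwise Hessian in the interiors of $\Om_{\textup{I}}$--$\Om_{\textup{IV}}$) and a singular part supported on the two lines $\ell_{\pm}$, and to show that the singular part is a rank one tensor whose sign is governed by the jump of $B_{x_{2}}$ alone.

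First I would use that $B$ is continuous across each line $\ell_{\pm}$, whose equation I write as $x_{2}=k_{\pm}x_{1}+m_{\pm}$ with $k_{\pm}=Q^{-p_{2}}\frac{p_{2}}{p_{1}}\gamma_{\pm}^{p_{2}-p_{1}}$. Differentiating the identity $[B]=0$ along the tangent direction $(1,k_{\pm})$ gives the Rankine--Hugoniot type relation
\begin{equation*}
[B_{x_{1}}]+k_{\pm}[B_{x_{2}}]=0,
\end{equation*}
where $[\,\cdot\,]$ denotes the jump from the ``upper'' side of $\ell_{\pm}$ to the ``lower'' side. Applying Lemma \ref{specialfunctions} once to $F=B_{x_{1}}$ and once to $F=B_{x_{2}}$ expresses the singular part of each second-order derivative as a multiple of the line measure $\delta_{\ell_{\pm}}$ (that is, of the functional $\Phi_{\pm}$). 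Using the relation above, the singular contribution to the Hessian on $\ell_{\pm}$ collapses to
\begin{equation*}
[B_{x_{2}}]\begin{pmatrix} k_{\pm}^{2} & -k_{\pm} \\ -k_{\pm} & 1 \end{pmatrix}\delta_{\ell_{\pm}}\;=\;[B_{x_{2}}]\begin{pmatrix} -k_{\pm} \\ 1 \end{pmatrix}\begin{pmatrix} -k_{\pm} & 1 \end{pmatrix}\delta_{\ell_{\pm}}.
\end{equation*}
In particular the singular parts of $B_{x_{1}x_{2}}$ and $B_{x_{2}x_{1}}$ automatically agree, so the Hessian is a well-defined symmetric tensor.

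The key observation is now that this singular part is a scalar ($[B_{x_{2}}]$) times the rank one positive semidefinite matrix $\mathbf{n}_{\pm}\mathbf{n}_{\pm}^{T}$ with $\mathbf{n}_{\pm}=(-k_{\pm},1)$. Hence in an arbitrary direction $(\Delta_{1},\Delta_{2})$ the singular contribution to the quadratic form is
\begin{equation*}
[B_{x_{2}}]\,(-k_{\pm}\Delta_{1}+\Delta_{2})^{2}\,\delta_{\ell_{\pm}}.
\end{equation*}
Taking $(\Delta_{1},\Delta_{2})=(0,1)$, a direction that is not parallel to $\ell_{\pm}$ because $k_{\pm}$ is finite, the hypothesis that $B_{x_{2}x_{2}}\leqslant 0$ as a generalized function, combined with the assumed pointwise nonpositivity of the regular part of $B_{x_{2}x_{2}}$, forces $[B_{x_{2}}]\leqslant 0$ on each $\ell_{\pm}$ (one tests with $\vf\geqslant 0$ concentrated near $\ell_{\pm}$: the regular part gives a bounded contribution, while the singular part blows up with a definite sign unless $[B_{x_{2}}]\leqslant 0$). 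Consequently $[B_{x_{2}}](-k_{\pm}\Delta_{1}+\Delta_{2})^{2}\leqslant 0$ for every $(\Delta_{1},\Delta_{2})$, so the singular part of the Hessian is nonpositive in every direction, and together with the nonpositivity of the regular part inside $\Om_{\textup{I}}$--$\Om_{\textup{IV}}$ we obtain nonpositivity of the full generalized Hessian.

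The main obstacle, such as it is, is purely bookkeeping: one must track carefully which side of $\ell_{\pm}$ plays the role of the ``upper'' function $f_{1}$ in Lemma \ref{specialfunctions} in each of the three cases $p_{1}>p_{2}>0$, $p_{1}>0>p_{2}$, $0>p_{1}>p_{2}$, and verify that the rank one identity above is independent of this convention. Once the rank one structure is in place, the remaining argument is an algebraic triviality driven by the fact that checking a rank one symmetric form in any single non-degenerate direction already pins down its sign.
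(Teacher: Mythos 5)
Your proposal is correct and takes essentially the same route the paper intends: the paper declares the proof ``obvious,'' citing precisely the symmetry $B''_{x_1x_2}=B''_{x_2x_1}$ and the degeneracy of the Hessian, which is exactly the rank-one structure of the singular (jump) part that you derive explicitly from the continuity of $B$ across $\ell_{\pm}$ via the relation $[B_{x_1}]+k_{\pm}[B_{x_2}]=0$. Your explicit $(-k_{\pm},1)\otimes(-k_{\pm},1)$ decomposition and the localization argument extracting the sign of $[B_{x_2}]$ simply supply the details the paper omits.
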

The proof is obvious and based on the fact that $B^{\prime\prime}_{x_{1}x_{2}}=B^{\prime\prime}_{x_{2}x_{1}}$ and $\det \frac{d^{2}B}{dx^{2}}=0$.
\subsection{On the approximation of $A_{p_{1}, p_{2}}^{Q}$-weights with bounded weights}\label{subsectionapproxim}
In this subsection we are going to prove two results about the approximation of $A_{p_{1}, p_{2}}^{Q}$-weights. The motivation is the following: if we have an integral of a function over a finite interval, it may be convenient to ``make'' the function to be bounded, because to bounded functions we can apply the Lebesgue Dominated Convergence Theorem.

We would like to prove following lemmata.
\begin{lemma}\label{cutoff}
Assume $w\in A_{p_{1}, p_{2}}^{Q}$. Take
$$
\underline{w}_{a}(t)=\begin{cases} w(t), &w(t)\leqslant a \\
                        a, &w(t)>a
                        \end{cases}.
$$
Then $\underline{w}_{a}\in A_{p_{1}, p_{2}}^{Q}$.
The same is true for the function
$$
\overline{w}_{a}(t)=\begin{cases} a, &w(t)\leqslant a \\
                        w(t), &w(t)>a
                        \end{cases}.
$$
\end{lemma}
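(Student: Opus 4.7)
The plan is a one-parameter monotonicity argument in the truncation level, executed symmetrically for $\underline{w}_a$ and $\overline{w}_a$. For $\underline{w}_a$, I would fix an arbitrary interval $J\subset I$ and embed the truncation in the family $w_{s}(t):=\min(w(t),s)$, $s\in[a,\infty)$, so that $w_{a}=\underline{w}_{a}$ and $w_{s}\to w$ pointwise as $s\to\infty$. Setting
$$
A_k(s):=\ili_J w_s(t)^{p_k}\,dt,\qquad R(s):=A_1(s)^{1/p_1}A_2(s)^{-1/p_2},
$$
the $A_{p_1,p_2}^Q$-condition for $w$ on $J$ reads $\lim_{s\to\infty}R(s)\leqslant Q|J|^{1/p_1-1/p_2}$, while the same bound at $s=a$ is exactly what we want for $\underline{w}_a$ on $J$. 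So it suffices to show $R$ is nondecreasing on $[a,\infty)$.

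To this end, I would differentiate pointwise in $t$: the derivative of $s\mapsto\min(w(t),s)^{p_k}$ is $p_ks^{p_k-1}$ for $s<w(t)$ and $0$ for $s>w(t)$, so by dominated convergence $A_k'(s)=p_ks^{p_k-1}|\{w>s\}\cap J|$ and hence
$$
\frac{d}{ds}\log R(s)=\frac{|\{w>s\}\cap J|\,s^{p_2-1}}{A_1(s)A_2(s)}\Bigl[s^{p_1-p_2}A_2(s)-A_1(s)\Bigr].
$$
All factors outside the bracket are positive (in particular $s^{p_2-1}>0$ regardless of the sign of $p_2$, since $s>0$). For the bracket, the pointwise bound $w_s\leqslant s$ combined with $p_1-p_2>0$ gives $w_s^{p_1-p_2}\leqslant s^{p_1-p_2}$, so $w_s^{p_1}\leqslant s^{p_1-p_2}w_s^{p_2}$ after multiplying by $w_s^{p_2}>0$, and integrating over $J$ yields $A_1(s)\leqslant s^{p_1-p_2}A_2(s)$. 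The bracket is thus $\geqslant 0$, $R$ is nondecreasing, and $R(a)\leqslant\lim_{s\to\infty}R(s)\leqslant Q|J|^{1/p_1-1/p_2}$, as required.

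For $\overline{w}_a$ the same plan works with $w_s:=\max(w,s)$ on $s\in[0,a]$: now $A_k'(s)=p_ks^{p_k-1}|\{w\leqslant s\}\cap J|$, and the reversed pointwise bound $w_s\geqslant s$ reverses the bracket inequality to $A_1(s)\geqslant s^{p_1-p_2}A_2(s)$, so $R$ is nonincreasing and $R(a)\leqslant R(0)\leqslant Q|J|^{1/p_1-1/p_2}$. The only thing I expect to be a (minor) obstacle is the sign bookkeeping, which must work uniformly in the three cases $p_1>p_2>0$, $p_1>0>p_2$, and $0>p_1>p_2$; but every quantity carrying a sign above --- $s^{p_2-1}$, $A_1A_2$, and the level-set measures --- is positive in every case, so no case split is needed, and integrability of $\underline{w}_a^{p_k}$ and $\overline{w}_a^{p_k}$ is automatic since on each piece each truncation equals either $w$ (integrable by hypothesis) or the constant $a$.
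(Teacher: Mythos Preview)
Your argument is correct and is a genuinely different route from the paper's. The paper fixes $J$, splits it as $J_1=\{w\le a\}$ and $J_2=\{w>a\}$, and compares the $A_{p_1,p_2}$-functional of $w$ with that of $\underline{w}_a$ via a two-step reduction: first it uses H\"older on $J_2$ to replace $w|_{J_2}$ by a constant $u$ (the $\partial\varphi/\partial s\ge 0$ step), and only then runs a monotonicity argument in that constant level $u\ge a$ (the $\partial\varphi/\partial u\ge 0$ step). Your approach bypasses the first reduction entirely by embedding $\underline{w}_a$ in the one-parameter family $w_s=\min(w,s)$ and differentiating the functional $R(s)$ directly; the key pointwise inequality $w_s^{p_1}\le s^{p_1-p_2}w_s^{p_2}$ you use is the integrated analogue of the paper's endgame inequality $u^{p_1}y_1-u^{p_2}z_1\ge 0$. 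For $\overline{w}_a$ the paper invokes the duality $w\mapsto w^{-1}$, $(p_1,p_2)\mapsto(-p_2,-p_1)$, while you run the symmetric computation with $\max(w,s)$; both are clean. Your version is shorter and avoids the auxiliary H\"older step, at the mild cost of needing to justify differentiation under the integral and the passage to the limit $s\to\infty$ (resp.\ $s\to 0^+$), which you do correctly. One cosmetic point: in the $\overline{w}_a$ case, $A_k'(s)=p_k s^{p_k-1}|\{w<s\}\cap J|$ rather than $|\{w\le s\}\cap J|$, but the two agree for a.e.\ $s$ and this does not affect the monotonicity conclusion.
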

\begin{zamech}
Note that
$$
\underline{w}_{a}\leqslant w \leqslant \overline{w}_{a}.
$$
\end{zamech}
\begin{zamech}
Note that it is sufficient to prove the lemma only for $\underline{w}_{a}$. The result for $\overline{w}_{a}$ will follow immediately, since instead of $w, p_{1}, p_{2}$ we can consider $w^{-1}, -p_{2}, -p_{1}$.
\end{zamech}
\begin{proof}[Proof]
First, we fix an interval $J\subset[0,1]$ and denote $J_{1}=\{t\in J\colon w(t)\leqslant a\}$, $J_{2}=\{t\in J\colon w(t)>a\}$. Denote also
$$
z_i=\av{w^{p_1}}{J_i},\qquad y_i=\av{w^{p_2}}{J_i},\qquad\alpha_i=\frac{|J_i|}{|J|}\,.
$$
Then we want to prove
\begin{equation}
\label{phi}
\begin{aligned}
\av{&w^{p_1}}J^{\frac1{p_1}}\av{w^{p_2}}J^{-\frac1{p_2}}- \av{w_a^{p_1}}J^{\frac1{p_1}}\av{w_a^{p_2}}J^{-\frac1{p_2}}
\\
&=(\alpha_1z_1+\alpha_2z_2)^{\frac1{p_1}} (\alpha_1y_1+\alpha_2y_2)^{-\frac1{p_2}}-
(\alpha_1z_1+\alpha_2a^{p_1})^{\frac1{p_1}} (\alpha_1y_1+\alpha_2a^{p_2})^{-\frac1{p_2}}
\geqslant0\,.
\end{aligned}
\end{equation}

By H\"older's inequality, we get $z_i^{\frac1{p_1}}\geqslant y_i^{\frac1{p_2}}$. Therefore, if we denote $y_2^{\frac1{p_2}}$ by $u$, then $z_2^{\frac1{p_1}}=su$ for a number $s\geqslant 1$ and expression~\eqref{phi}, which we need to estimate, can be written as the following function of $s$ and $u$:
$$
\vf(s,u)=(\alpha_1z_1+\alpha_2s^{p_1}u^{p_1})^{\frac1{p_1}} (\alpha_1y_1+\alpha_2u^{p_2})^{-\frac1{p_2}}-
(\alpha_1z_1+\alpha_2a^{p_1})^{\frac1{p_1}} (\alpha_1y_1+\alpha_2a^{p_2})^{-\frac1{p_2}}\,.
$$
Since
$$
\frac{\partial\vf}{\partial s}=\alpha_2 s^{p_1-1}u^{p_1} (\alpha_1z_1+\alpha_2s^{p_1}u^{p_1})^{\frac1{p_1}-1}\geqslant0\,,
$$
the function $\vf$ is increasing in $s$ and therefore $\vf(s,u)\geqslant\vf(1,u)$, i.e., it has the minimal value when $w(t)$ is equal to $u$ on $J_2$ identically.

Now we have $u=w(t)|_{J_2}>a$ and since $\vf(1,a)=0$, the desired inequality will be proved after checking that $\frac{\partial\vf}{\partial u}(1,u)\geqslant0$. We write

\begin{align*}
\frac{\partial\vf}{\partial u}&(1,u)
\\
&=\alpha_2u^{-1}(\alpha_1z_1+\alpha_2u^{p_1})^{\frac1{p_1}-1} (\alpha_1y_1+\alpha_2u^{p_2})^{-\frac1{p_2}-1}\times
\\
&\qquad\qquad\qquad\qquad\qquad\qquad\qquad \times\big[u^{p_1}(\alpha_1y_1+\alpha_2u^{p_2})- u^{p_2}(\alpha_1z_1+\alpha_2u^{p_1})\big]
\\
&=\alpha_1\alpha_2u^{-1}(\alpha_1z_1+\alpha_2u^{p_1})^{\frac1{p_1}-1} (\alpha_1y_1+\alpha_2u^{p_2})^{-\frac1{p_2}-1}[u^{p_1}y_1-u^{p_2}z_1],
\end{align*}
and we are done because $u^{p_1}y_1-u^{p_2}z_1\geqslant0$. Indeed,
since $u\geqslant w(t)$ and $p_1\geqslant p_2$, we have $u^{p_1-p_2}\geqslant w(t)^{p_1-p_2}$, whence $u^{p_1}w^{p_2}\geqslant u^{p_2}w^{p_1}$. Therefore,
$$
u^{p_1}y_1-u^{p_2}z_1=\av{u^{p_1}w^{p_2}-u^{p_2}w^{p_1}}{J_1}\geqslant0\,,
$$
what completes the proof.
\end{proof}

\section{Searching for $B$}\label{search}
\subsection{Domain $\Omega_{\textup I}$}\label{subsectionomega1}
\begin{lemma}\label{om1conv}
For every point $x=(x_{1}, x_{2})\in \Om_{\textup{I}}$ there are two numbers $u\geqslant 1$ and  $v\geqslant 1$ such that $x$ lies on the line segment which connects $(u^{p_{1}}, u^{p_{2}})$ and $(v^{p_{1}}, v^{p_{2}})$, and this line segment lies in $\Om_{\textup{I}}$.
\end{lemma}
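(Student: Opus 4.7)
My plan is to exploit the concavity of $\Gamma_1$. In each of the three sign cases, $\Gamma_1$ is the graph of a concave function (of $x_1$ when $p_1>0$, of $x_2$ when $p_1<0$), so $\Omega$ lies below $\Gamma_1$ and every point of $\Omega$ is a convex combination of two points of $\Gamma_1$. The question thus reduces to producing endpoints with parameter at least $1$ such that the chord between them stays above $\Gamma_Q$. The critical such chord is already in hand: by Lemma~\ref{lemma1} (with $v=1$), $\ell_+(1)$ is exactly the chord of $\Gamma_1$ joining $(1,1)$ to $(v_+^{p_1},v_+^{p_2})$ that is tangent to $\Gamma_Q$; by concavity of $\Gamma_1$, any shorter chord of $\Gamma_1$ with parameters in $[1,v_+]$ sits above $\ell_+(1)$ and so inside $\Omega$.

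I would split $\Omega_{\textup{I}}$ according to the two sets in its definition. If $x$ lies strictly above $\ell_+(1)$, draw the line $L$ through $(1,1)$ and $x$. In Case~1, the relevant part of this region forces $x_1>1$ (for $x_1<1$ the line $\ell_+(1)$ is above $\Gamma_1$, so the set is empty), and the slope of $L$ strictly exceeds the slope of $\ell_+(1)$. Hence $L$ lies outside the cone of tangent slopes from $(1,1)$ to $\Gamma_Q$ (whose boundary rays are $\ell_\pm(1)$), so $L\cap\Gamma_Q=\varnothing$. Concavity of $\Gamma_1$ together with $x$ lying strictly below $\Gamma_1$ forces $L$ to meet $\Gamma_1$ at $(1,1)$ and at exactly one further point $(v^{p_1},v^{p_2})$, and the position of $x$ to the right of $(1,1)$ and above $\ell_+(1)$ gives $v\geq 1$. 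Taking $u=1$, the chord $[(1,1),(v^{p_1},v^{p_2})]$ contains $x$ and lies in $\Omega$.

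If instead $x_1>\gamma_+^{p_1}$ but $x$ is not above $\ell_+(1)$, I cannot take $(1,1)$ as an endpoint, so I would draw from $x$ the analog of $\ell_+$: the tangent to $\Gamma_Q$ whose tangency point lies to the right of $x$. Existence follows from the lemma on the two tangents through a point of $\Omega\setminus\Gamma_Q$. This tangent meets $\Gamma_1$ at two points $(u^{p_1},u^{p_2})$ and $(v^{p_1},v^{p_2})$, and the chord between them automatically lies in $\Omega$ because a tangent to $\Gamma_Q$ is above $\Gamma_Q$. Cases~2 and~3 are handled by the same scheme with the obvious reversals of inequalities.

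The main obstacle is this second case: verifying $u,v\geq 1$. The hypothesis $x_1>\gamma_+^{p_1}$ is designed precisely so that the tangency point of the chosen tangent on $\Gamma_Q$ has first coordinate bigger than $\gamma_+^{p_1}$, which in turn forces the leftmost $\Gamma_1$-intersection to have parameter at least $1$. The quantitative verification is a rerun of the bookkeeping behind Lemma~\ref{lemma1}: from the explicit form~\eqref{tangents} of the tangent and the defining relation~\eqref{gamma} for $\gamma_\pm$, one reads off the two $\Gamma_1$-intersections and checks the inequality. Once this is established, the concavity observation from the first paragraph upgrades the conclusion ``chord lies in $\Omega$'' to ``chord lies in $\Omega_{\textup{I}}$''.
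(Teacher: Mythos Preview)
The paper's own ``proof'' of this lemma is the single sentence ``This lemma is obvious from the picture,'' so you are not so much reproducing the paper's argument as supplying one where none was given. Your two-case split (points above $\ell_+(1)$ handled by a chord from $(1,1)$; points with $x_1>\gamma_+^{p_1}$ handled by a tangent chord to $\Gamma_Q$) is exactly the right way to make the picture rigorous, and your observation that a tangent to $\Gamma_Q$ at parameter $a$ meets $\Gamma_1$ at parameters $a/\gamma_+$ and $a/\gamma_-$ --- so that $a>\gamma_+$ forces both endpoints to have parameter $\ge 1$ --- is the clean quantitative fact that drives Case~2.

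There is one small slip in Case~1. You argue that the line $L$ through $(1,1)$ and $x$ has slope strictly greater than that of $\ell_+(1)$ and conclude that $L$ misses $\Gamma_Q$. But the ``cone of tangent slopes'' from $(1,1)$ to $\Gamma_Q$ is the interval between the slopes of $\ell_+(1)$ and $\ell_-(1)$, and it is the lines \emph{inside} this cone that avoid $\Gamma_Q$; lines with slope below that of $\ell_+(1)$ or above that of $\ell_-(1)$ cut through $\Gamma_Q$. So you also need the slope of $L$ to be below that of $\ell_-(1)$. This is automatic, however: $\ell_-(1)$ is the chord of $\Gamma_1$ with endpoints at parameters $v_-<1$ and $1$, so by the concavity of $\Gamma_1$ it lies above $\Gamma_1$ for $x_1>1$; hence every point of $\Omega$ with $x_1>1$ lies below $\ell_-(1)$, and the slope of $L$ is strictly less than that of $\ell_-(1)$. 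With that addition your Case~1 argument goes through, and the upgrade from ``chord in $\Omega$'' to ``chord in $\Omega_{\textup I}$'' follows because the chord you produce has both $\Gamma_1$-parameters in $[1,v_+]$ and hence (again by concavity of $\Gamma_1$) sits above $\ell_+(1)$.
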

This lemma is obvious from the picture.
\begin{lemma}\label{Bisone}
For every $x$, such that $x\in \Om_{\textup{I}}$, we have $\mathcal{B}(x)=1$.
\end{lemma}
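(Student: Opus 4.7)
The plan is straightforward: combine Lemma \ref{om1conv} with Lemma \ref{kusochnaya} to exhibit an explicit test function $w \in A_{p_1,p_2}^Q$ whose values on $I$ are both at least $1$, so that the set $\{t : w(t) \geq 1\}$ has full measure $1$.

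More precisely, fix $x = (x_1,x_2) \in \Omega_{\textup{I}}$. By Lemma \ref{om1conv}, there exist $u,v \geq 1$ such that $x$ lies on the line segment joining $(u^{p_1}, u^{p_2})$ and $(v^{p_1}, v^{p_2})$, and this segment is entirely contained in $\Omega_{\textup{I}} \subset \Omega$. Choose $\mu \in [0,1]$ such that
\[
x_k = \mu u^{p_k} + (1-\mu) v^{p_k}, \qquad k=1,2,
\]
and define the piecewise constant test function
\[
w(t) = \begin{cases} u, & t \in [0,\mu), \\ v, & t \in [\mu, 1]. \end{cases}
\]
By Lemma \ref{kusochnaya} (whose hypothesis is exactly that the segment lies in $\Omega$), we have $w \in A_{p_1,p_2}^Q$, and by the remark following it, $\langle w^{p_1} \rangle = x_1$ and $\langle w^{p_2} \rangle = x_2$.

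Since $u \geq 1$ and $v \geq 1$, we have $w(t) \geq 1$ for every $t \in I$, so $|\{t : w(t) \geq 1\}| = 1$. Taking $w$ as an admissible candidate in the supremum in \eqref{defB}, we conclude $\mathcal{B}(x) \geq 1$. The reverse inequality $\mathcal{B}(x) \leq 1$ is trivial since the set whose measure we take is a subset of $I = [0,1]$, so $\mathcal{B}(x) = 1$.

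There is no genuine obstacle here: the whole content of the lemma is packaged into Lemma \ref{om1conv} (a purely geometric statement about the definition of $\Omega_{\textup{I}}$) and Lemma \ref{kusochnaya} (which says that piecewise constant functions with two values whose $(p_1,p_2)$-coordinates lie on a chord inside $\Omega$ are automatically $A_{p_1,p_2}^Q$). The only small point worth noting is why $u,v \geq 1$ is compatible with $\Omega_{\textup{I}}$ in each of the three cases described in the figures; this is visible from the definitions, since $\Omega_{\textup{I}}$ is precisely the region swept out by chords between points of $\Gamma_1$ corresponding to $u,v \geq 1$.
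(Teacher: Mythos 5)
Your proof is correct and follows exactly the same route as the paper: invoke Lemma \ref{om1conv} to get $u,v\geqslant 1$ with $x$ on the chord between $(u^{p_1},u^{p_2})$ and $(v^{p_1},v^{p_2})$, build the two-valued piecewise-constant test function, apply Lemma \ref{kusochnaya} for membership in $A_{p_1,p_2}^Q$, and observe $|\{w\geqslant 1\}|=1$ together with the trivial upper bound $\mathcal{B}\leqslant 1$. Nothing to add.
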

\begin{proof}[Proof of the Lemma \ref{Bisone}]
Take a point $x\in \Om_{\textup{I}}$ and numbers $u, v$ from the Lemma \ref{om1conv}. Then for some $\mu\in[0,1]$ we have $x_{k}=\mu u^{p_{k}}+(1-\mu)v^{p_{k}}$. For this $\mu$ take
$$
w(t)=\begin{cases} u, &t\in [0, \mu) \\ v, &t\in [\mu, 1] \end{cases}.
$$
By the Lemma \ref{kusochnaya}, $w\in A_{p_{1}, p_{2}}^{Q}$. Further,
$$
\ave{w^{p_{k}}}=\mu u^{p_{k}}+(1-\mu)v^{p_{k}}=x_{k}.
$$
We took $u,v\geqslant 1$, thus $|\{ w(t)\geqslant 1 \}|=1$. Since $\ave{w^{p_{k}}}=x_{k}$, and $w\in A_{p_{1}, p_{2}}^{Q}$, we get $\mathcal{B}(x)\geqslant  |\{ w(t)\geqslant 1 \}|=1$. On the other hand, by definition, $\mathcal{B}(x)\leqslant 1$. Therefore, $\mathcal{B}(x)=1$.
\end{proof}
\subsection{Domain $\Omega_{\textup{III}}$}
In this section we find $B$ in $\Om_{\textup{III}}$. As it was said in the subsection \ref{MongeAmpere}, we need to find lines on which $B$ is linear. These lines will simply be lines which connects the point $(1,1)$ with points on $\Gamma$. Our setting is the following: we fix a point $(v^{p_{1}}, v^{p_{2}})\in \Gamma$ and take a line with an equation
$$
v^{p_{2}}(1-x_{1})-v^{p_{1}}(1-x_{2})=x_{2}-x_{1}.
$$
Obviously, this line contains $(1,1)$ and $(v^{p_{1}}, v^{p_{2}})$. We assume that $B$ is linear on our line. Using that $B(1,1)=1$ and $B(v^{p_{1}}, v^{p_{2}})=0$, we get
$$
B(x)=\frac{x_{1}-v^{p_{1}}}{1-v^{p_{1}}}=\frac{x_{2}-v^{p_{2}}}{1-v^{p_{2}}}.
$$
\subsection{Domain $\Omega_{\textup{II}}$}
To find $B$ in $\Om_{\textup{II}}$, we use the following simple observation. $\Om_{\textup{II}}$ has three parts of boundary: parts of $\ell_{\pm}$ and part of $\Gamma_{Q}$. Since our candidate for $\mathcal{B}$ is linear on mentioned parts of $\ell_{\pm}$, it is natural to assume $B$ to be linear in the whole domain $\Om_{\textup{II}}$. Restriction of $B$ to these parts are also linear. It gives us a hope that we can find a fully linear $B$, namely, $B(x)=ax_{1}+bx_{2}+c$. How do we find $a,b,c$? We want $B$ to be continuous on $\ell_{\pm}$ and, therefore, we want $B(1,1)=1$, $B(v_{-}^{p_{1}}, v_{-}^{p_{2}})=0$, $B(v_{+}^{p_{1}}, v_{+}^{p_{2}})=1$. It gives us three equations:
$$ \left\{
  \begin{aligned}
  &a+b+c = 1\\
  &av_-^{p_1}+bv_-^{p_2}+c = 0 \\
  &av_+^{p_1}+bv_+^{p_2}+c = 1
  \end{aligned}
\right. .
$$
Solving this linear system (and using that $v_{+}=\frac{1}{\;v_{-}}$) one gets
$$
\left\{
  \begin{aligned}
  &a=\frac{v_{-}^{p_{1}}}{(1-v_{-}^{p_{1}})(v_{-}^{p_{1}}-v_{-}^{p_{2}})}\\
  &b=\frac{v_{-}^{p_{2}}}{(v_{-}^{p_{2}}-1)(v_{-}^{p_{1}}-v_{-}^{p_{2}})} \\
  &c =1-\frac{1}{(v_{-}^{p_{1}}-1)(v_{-}^{p_{2}}-1)}
  \end{aligned}
\right. .
$$
\subsection{Domain $\Omega_{\textup{IV}}$}
Now we shall find $B$ in $\Om_{\textup{IV}}$. We guess that if $x\in \Om_{\textup{IV}}$ then $B$ is linear on the tangent from $x$ to $\Gamma_{Q}$, which corresponds to $\gamma_{+}$. We remind what it means. For every point $x$ there is a unique point $(v^{p_1}, v^{p_2})\in\Gamma_1$, such that $x$ lies on the line, going from this point, tangent to $\Gamma_Q$ and having $\gamma_+$ in its slope. Namely, the equation of this tangent is
$$
x_{2}=Q^{-p_{2}}\frac{p_{2}}{p_{1}}\gamma_{+}^{p_{2}-p_{1}}v^{p_{2}-p_{1}}(x_{1}-v^{p_{1}})+v^{p_{2}}.
$$
We know that on this line $t_0, t_1, t_2$ are supposed to be constants. It means that they can depend only on $v$.
Therefore, we divide the equation
$$
dt_{0}+x_{1}dt_{1}+x_{2}dt_{2}=0
$$
over $dv$ and get:
\begin{equation}\label{diffff}
t_{0}^{\prime}(v)+x_{1}t_{1}^{\prime}(v)+x_{2}t_{2}^{\prime}(v)=0,
\end{equation}
when
$$
x_{2}=Q^{-p_{2}}\frac{p_{2}}{p_{1}}\gamma_{+}^{p_{2}-p_{1}}v^{p_{2}-p_{1}}(x_{1}-v^{p_{1}})+v^{p_{2}}.
$$
Now we substitute $x_{2}$ from this equation into \eqref{diffff} and use that \eqref{diffff} is true for infinitely many values of $x_{1}$. Therefore, the coefficient of $x_{1}$ must be equal to zero, which yields
\eq[diff]{
t_{1}^{\prime}+Q^{-p_{2}}\frac{p_{2}}{p_{1}}\gamma_{+}^{p_{2}-p_{1}}v^{p_{2}-p_{1}}t_{2}^{\prime}=0.
}
Since $B(v^{p_{1}}, v^{p_{2}})=0$, we get
$$
t_{0}+v^{p_{1}}t_{1}+v^{p_{2}}t_{2}=0, \; \; dt_{0}+v^{p_{1}}dt_{1}+v^{p_{2}}dt_{2}=0,
$$
so
$$
dt_{0}+v^{p_{1}}dt_{1}+p_{1}v^{p_{1}-1}t_{1}dv+v^{p_{2}}dt_{2}+p_{2}v^{p_{2}-1}t_{2}dv=0
$$
or
\eq[tratata]{
p_{1}t_{1}v^{p_{1}-1}+p_{2}t_{2}v^{p_{2}-1}=0,
}
thus
$$
t_{2}^{\prime}=-\frac{p_{1}}{p_{2}}v^{p_{1}-p_{2}-1}(t_{1}^{\prime}v+(p_{1}-p_{2})t_{1}).
$$
Combining the last equation with \eqref{diff} we obtain
$$
t_{1}^{\prime}(1-Q^{-p_{2}}\gamma_{+}^{p_{2}-p_{1}})=\frac{p_{1}-p_{2}}{v}t_{1}Q^{-p_{2}}\gamma_{+}^{p_{2}-p_{1}},
$$
so
$$
t_{1}=Cv^{\frac{(p_{1}-p_{2})Q^{-p_{2}}\gamma_{+}^{p_{2}-p_{1}}}{1-Q^{-p_{2}}\gamma_{+}^{p_{2}-p_{1}}}}=Cv^{\frac{p_1}{\gamma_+^{p_1}-1}.}
$$
From \eqref{tratata} we get
$$
t_{2}=-\frac{p_{1}}{p_{2}}Cv^{{\frac{p_{1}-p_{2}}{1-Q^{-p_{2}}\gamma_{+}^{p_{2}-p_{1}}}}},
$$
and from $t_{0}+v^{p_{1}}t_{1}+v^{p_{2}}t_{2}=0$ we get
$$
t_{0}=\left(\frac{p_{1}}{p_{2}}-1\right)Cv^{{\frac{p_{1}-p_{2}Q^{-p_{2}}\gamma_{+}^{p_{2}-p_{1}}}{1-Q^{-p_{2}}\gamma_{+}^{p_{2}-p_{1}}}}}=\frac{p_1-p_2}{p_1}\cdot C\cdot v^{\frac{p_1 \gamma_+^{p_1}}{\gamma_+^{p_1}-1}}.
$$
We shall find $C$ such that $B$ is continuous in $\Om_{\textup{III}}\cap\Om_{\textup{IV}}$. As before, $A=Q^{-p_{2}}\gamma_{+}^{p_{2}-p_{1}}$. On the line
$$
x_{2}=A\frac{p_{2}}{p_{1}}v_{-}^{p_{2}-p_{1}}x_{1}+v_{-}^{p_{2}}\left(1-\frac{p_{2}}{p_{1}}A\right)
$$
we have
\begin{align*}
B(x_{1}, x_{2})&= C\Bigl[\frac{p_{1}-p_{2}}{p_{2}}v_{-}^{\frac{p_{1}-p_{2}A}{1-A}}+x_{1}v_{-}^{\frac{(p_{1}-p_{2})A}{1-A}}-A\frac{p_{2}}{p_{1}}v_{-}^{p_{2}-p_{1}}x_{1}\frac{p_{1}}{p_{2}}v_{-}^{\frac{p_{1}-p_{2}}{1-A}}-\\
 & \qquad \qquad \qquad \qquad\qquad\qquad\qquad-\frac{p_{1}-p_{2}A}{p_{1}}v_{-}^{p_{2}}\frac{p_{1}}{p_{2}}v_{-}^{\frac{p_{1}-p_{2}}{1-A}}\Bigr]=\\
&=C(1-A)\left[x_{1}v_{-}^{\frac{(p_{1}-p_{2})A}{1-A}}-v_{-}^{\frac{p_{1}-p_{2}A}{1-A}}\right].
\end{align*}
But on this line
$$
B(x_{1}, x_{2})=\frac{x_{1}-v_{-}^{p_{1}}}{1-v_{-}^{p_{1}}},
$$
so
$$
C=\frac{1}{1-A}\frac{v_{-}^{-\frac{(p_{1}-p_{2})A}{1-A}}}{1-v_{-}^{p_{1}}}.
$$
\subsection{Answer for $B$}
Now we would like to state the answer for $B$.
\begin{equation}\label{answerforb}
B(x)=\begin{cases} 1, &x\in\Om_{\textup{I}} \\
                   \frac{v_{-}^{p_{1}}}{(1-v_{-}^{p_{1}})(v_{-}^{p_{1}}-v_{-}^{p_{2}})}x_{1}+\frac{v_{-}^{p_{2}}}{(v_{-}^{p_{2}}-1)(v_{-}^{p_{1}}-v_{-}^{p_{2}})}x_{2}+ 1-\frac{1}{(v_{-}^{p_{1}}-1)(v_{-}^{p_{2}}-1)}, &x\in \Om_{\textup{II}}\\ \\
                   \frac{x_{1}-v^{p_{1}}}{1-v^{p_{1}}}, &x\in \Om_{\textup{III}}\\ \\
                   \frac{1}{1-A}\frac{v_{-}^{-\frac{(p_{1}-p_{2})A}{1-A}}}{1-v_{-}^{p_{1}}}v^{\frac{p_{1}-p_{2}}{1-A}} \left( \frac{p_{1}-p_{2}}{p_{2}}v^{p_{2}}+x_{1}v^{p_{2}-p_{1}}-\frac{p_{1}}{p_{2}}x_{2}\right), &x\in \Om_{\textup{IV}} \end{cases},
\end{equation}
where $v$ is defined as a solution of equation
$$
\begin{cases} v^{p_{2}}(1-x_{1})-v^{p_{1}}(1-x_{2})=x_{2}-x_{1}, &x\in \Om_{\textup{III}} \\
              x_{2}=Q^{-p_{2}}\frac{p_{2}}{p_{1}}\gamma_{+}^{p_{2}-p_{1}}v^{p_{2}-p_{1}}(x_{1}-v^{p_{1}})+v^{p_{2}}, &x\in \Om_{\textup{IV}}
\end{cases},
$$
and has the property that $\sig(x_{1}-v^{p_{1}})=\sig(p_{1})$.
\section{The estimate from above: $B\geqslant \mathcal{B}$}\label{bolshe}
In the proof of the Theorem \ref{th1} we will use the following lemma. However, we postpone its prove to the Section \ref{profconc}.
\begin{lemma}\label{lemma-1-1}
$B$ is locally concave.
\end{lemma}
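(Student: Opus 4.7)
The plan is to verify local concavity cell-by-cell, using the framework of Subsection~\ref{special}. By the final lemma of that subsection, and since $\det(d^2B/dx^2)=0$ in every smooth piece by construction (this was the very Monge--Amp\`ere ansatz that produced $B$), it suffices to check three things: (a) in the interior of each cell $\Om_{\textup{I}},\Om_{\textup{II}},\Om_{\textup{III}},\Om_{\textup{IV}}$, $B$ is smooth with $B_{x_2 x_2}\leqslant 0$; (b) $B$ is continuous across every interior interface ($\ell_+$, $\ell_-$, and the relevant part of $\Gamma_1$); and (c) the jump of $B_{x_2}$ across each such interface has the sign that makes the resulting delta contribution to the generalized $B_{x_2 x_2}$ nonpositive, as quantified by Lemma~\ref{specialfunctions}.

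Step (a) is trivial on $\Om_{\textup{I}}$ and $\Om_{\textup{II}}$, where $B$ is respectively constant and affine, so the Hessian vanishes identically. On $\Om_{\textup{III}}$ the formula $B=(x_2-v^{p_2})/(1-v^{p_2})$ is linear along the segment from $(1,1)$ to $(v^{p_1},v^{p_2})$, so a single transverse differentiation remains; computing it by the chain rule and implicit differentiation of the defining equation for $v(x)$ (analogously to Lemma~\ref{depend}) reduces $B_{x_2 x_2}\leqslant 0$ to an expression whose sign is pinned down by $\sig(p_1),\sig(p_2)$ and $\sig(v-1)$. On $\Om_{\textup{IV}}$, Lemma~\ref{depend} already furnishes the closed-form $v'_{x_1},v'_{x_2}$ and the sign of $\Pi$; substituting into the explicit formula for $B$ I would express $B_{x_2 x_2}$ as a product of factors whose signs are controlled by Lemma~\ref{estimate} (giving $1-A>0$) together with Remark~\ref{zamechanie}.

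Step (b) is essentially built-in: continuity across the portion of $\ell_+$ separating $\Om_{\textup{III}}$ from $\Om_{\textup{IV}}$ is precisely the equation that fixed the constant $C$; continuity across $\ell_-$ between $\Om_{\textup{II}}$ and $\Om_{\textup{III}}$ holds because both sides are affine and match at the two endpoints $(1,1)$ and $(v_-^{p_1},v_-^{p_2})$; continuity between $\Om_{\textup{II}}$ and $\Om_{\textup{IV}}$ (and between $\Om_{\textup{I}}$ and its neighbouring cells across $\Gamma_1$) follows from the same two matches together with the boundary values $B(1,1)=1$ and $B\equiv 1$ on $\Om_{\textup{I}}$. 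For step (c), I would compute $\lim B_{x_2}$ from each side of each interface $\ell_\pm$ using the explicit formulas, substitute into Lemma~\ref{specialfunctions} and the Remark following it, and verify nonpositivity of the coefficient of $\Phi_\pm(\vf)$.

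The main obstacle will be the sign bookkeeping in (a) on $\Om_{\textup{IV}}$ and in (c) at the interfaces $\ell_\pm$. The $\Om_{\textup{IV}}$ formula features the exponent $\frac{p_1-p_2}{1-A}$ and the constant $C$, whose signs flip depending on which of the regimes $p_1>p_2>0$, $p_1>0>p_2$, or $0>p_1>p_2$ is in force; similarly the jump $[B_{x_2}]$ across $\ell_\pm$ must be shown to have the correct orientation in all three cases simultaneously. Lemma~\ref{estimate}, the identity $\gamma_+ v = a_+$, and the sign data of Lemma~\ref{depend} are precisely the ingredients needed to reduce each regime to an elementary inequality, so I anticipate no conceptual difficulty, only a somewhat lengthy case-by-case verification.
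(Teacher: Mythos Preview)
Your plan is essentially the paper's own proof (Section~\ref{profconc}): verify $d^2B/dx^2\leqslant 0$ smoothly inside each of the four cells, then control the delta contributions from jumps of $B_{x_2}$ across the interior interfaces via Lemma~\ref{specialfunctions} and the reduction lemma at the end of Subsection~\ref{special}.

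A few geometric corrections to your interface bookkeeping. The boundary between $\Om_{\textup{III}}$ and $\Om_{\textup{IV}}$ lies along $\ell_-$, not $\ell_+$: indeed $\ell_+(v_-)$ and $\ell_-(1)$ coincide, since both are tangent to $\Gamma_Q$ at the point $(\gamma_-^{p_1},Q^{-p_2}\gamma_-^{p_2})$ (recall $\gamma_+ v_-=\gamma_-$). The cells $\Om_{\textup{II}}$ and $\Om_{\textup{IV}}$ do not share a common boundary, so no check is needed there. And $\Gamma_1$ is part of $\partial\Omega$, not an interior interface, so it carries no jump analysis either. The three genuine interior interfaces are $\Om_{\textup{I}}/\Om_{\textup{II}}$ along $\ell_+$, $\Om_{\textup{II}}/\Om_{\textup{III}}$ along part of $\ell_-$, and $\Om_{\textup{III}}/\Om_{\textup{IV}}$ along the remaining part of $\ell_-$. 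One pleasant simplification you will find at the last of these: the first derivatives $B_{x_1},B_{x_2}$ are actually continuous across $\Om_{\textup{III}}\cap\Om_{\textup{IV}}$ (the choice of $C$ forces this), so only the first two interfaces contribute nontrivial delta terms.
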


We would like to prove that $B(x)\geqslant \mathcal{B}(x)$. To do it, we enlarge our $\Omega$ and consider another $B$. Precisely, take a number $Q_1$, $Q_1>Q$, and a domain $\Om_{Q_1}=\{x=(x_{1}, x_{2}): 1\leqslant x_{1}^{\frac{1}{p_{1}}}x_{2}^{-\frac{1}{p_{2}}}\leqslant Q_1 \}$. Now for this domain we build a function $B_{Q_1}(x)$ in the same way we built our $B$ in $\Om$. In fact, we use formulas \eqref{answerforb}, but we change $Q$ by $Q_1$. Note that $\Om\subset \Om_{Q_1}$ and $A_{p_{1}, p_{2}}^{Q}\subset A_{p_{1}, p_{2}}^{Q_1}$. We need the following lemma.
\begin{lemma}\label{lemma1-2}
Fix $Q_1>Q>1$. Then for every $w\in A_{p_{1}, p_{2}}^{Q}$ there are two intervals $I^{+}$ and $I^{-}$ such that $I=I^{-}\cup I^{+}$ and if $x^{\pm}=\left( \ave{w^{p_{1}}}_{I_{\pm}}, \ave{w^{p_{2}}}_{I_{\pm}}\right)$ then $[x^{-}, x^{+}]\subset \Om_{Q_1}$.
Also the parameters $\alpha^{\pm}$ can be taken separated from $0$ and $1$ uniformly with respect to $w$.
\end{lemma}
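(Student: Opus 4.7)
The plan is to take $I^-(\alpha) = [0,\alpha]$, $I^+(\alpha) = [\alpha,1]$ and to find the splitting parameter $\alpha$ in a fixed subinterval $[c, 1-c]$ with $c = c(Q, Q_1) > 0$ independent of $w$. Several observations are immediate: since the $A_{p_1,p_2}^Q$-condition is hereditary to subintervals, $x^\pm(\alpha) \in \Om$ for every $\alpha \in (0,1)$; and $X := (\ave{w^{p_1}}, \ave{w^{p_2}}) = \alpha\, x^-(\alpha) + (1-\alpha)\, x^+(\alpha)$, so the chord $[x^-(\alpha), x^+(\alpha)]$ always passes through $X \in \Om$. The substantive task is to ensure the whole chord lies in the larger domain $\Om_{Q_1}$.

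The obstacle is that $\Om$ itself is not convex: the H\"older ratio $h(y) := y_1^{1/p_1} y_2^{-1/p_2}$ can exceed $Q$ along a chord between two points of $\Om$, so the chord may bulge out. I would absorb this overshoot using the slack $Q_1 - Q > 0$. After reducing to bounded $w$ via Lemma \ref{cutoff} (so that $\alpha \mapsto x^\pm(\alpha)$ is Lipschitz and takes values in a compact subset of $\Om$), I would study the continuous function
\[
\Phi(\alpha) := \sup_{\tau \in [0,1]} h\bigl((1-\tau)\, x^-(\alpha) + \tau\, x^+(\alpha)\bigr)
\]
and aim to show that $\Phi(\alpha) \leq Q_1$ for some $\alpha \in [c, 1-c]$. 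The key analytic input is a quantitative separation estimate: since $X \in \Om$ is an $(\alpha, 1-\alpha)$-convex combination of $x^\pm(\alpha)$, classical power-mean inequalities applied to the parameters realizing $x^\pm$ on level curves of $h$ force an upper bound on $|x^+(\alpha) - x^-(\alpha)|$ whenever $\alpha \in [c, 1-c]$ (both $x^\pm$ cannot simultaneously lie near $\Gamma_Q$ without pushing $X$ off of $\Om$). Combining this with the openness of $\Om_{Q_1}$ around $\Om$ would yield the universal constant $c = c(Q, Q_1)$.

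The hard part is making this last step uniform in $w$: a naive continuity argument only produces $\Phi(\alpha) \leq Q_1$ for some $\alpha$ that might approach $0$ or $1$ as $w$ varies. The resolution I expect to pursue is to exploit the strict inequality $Q < Q_1$ together with the precise shape of $\Om$ near $\Gamma_Q$, ruling out that the bad set $\{\alpha : \Phi(\alpha) > Q_1\}$ covers all of $[c, 1-c]$ for $c$ sufficiently small depending only on $Q$ and $Q_1$.
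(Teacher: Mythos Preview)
The paper does not prove this lemma; it simply cites Vasyunin~\cite{Va} and moves on. So there is no ``paper's own proof'' to compare against, and your write-up is not a proof either: it is an outline in which the decisive step is explicitly left open (``the hard part is making this last step uniform in $w$ \ldots\ the resolution I expect to pursue \ldots''). The task therefore reduces to whether your outline can be completed as stated. It cannot, for the following reason.

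Your argument uses only three facts: $x^{-}(\alpha)\in\Omega$, $x^{+}(\alpha)\in\Omega$, and $X=\alpha x^{-}(\alpha)+(1-\alpha)x^{+}(\alpha)\in\Omega$. These alone do not force $[x^{-},x^{+}]\subset\Omega_{Q_1}$ for any $\alpha\in[c,1-c]$ once $Q_1$ is close to $Q$. In the $A_2$ case ($p_1=1$, $p_2=-1$, $h(y)=y_1y_2$), take $Q=4$, $x^{-}=(1,1)\in\Gamma_1$ and $x^{+}=(t,4/t)\in\Gamma_Q$ with $t$ chosen so that $x_1^{-}x_2^{+}+x_1^{+}x_2^{-}=3Q-1=11$ (i.e.\ $t/1+4/t\cdot 1=?$; solving $4/u+u=11$ gives $u\approx10.6$). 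Then $X=\tfrac12(x^{-}+x^{+})$ satisfies $X_1X_2=Q$, so $X\in\Omega$, yet the product $z_1z_2$ along the chord peaks at $\tau=3/4$ with value $(9Q-1)/8=35/8>Q$; hence for any $Q_1<35/8$ the chord leaves $\Omega_{Q_1}$. Since in a purely geometric formulation $x^{\pm}$ do not depend on $\alpha$, no choice of $\alpha$ repairs this.

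What blocks this configuration from arising as $x^{\pm}(1/2)$ for an actual $w\in A_{1,-1}^{4}$ is the $A^{Q}$-condition on \emph{other} subintervals: if $w\equiv1$ on $[0,\tfrac12]$ and has the above averages on $[\tfrac12,1]$, then already $\av{w}{[1/4,1]}\av{w^{-1}}{[1/4,1]}>4$. Your sketch never invokes this; the phrases ``classical power-mean inequalities'' and ``upper bound on $|x^{+}-x^{-}|$'' use only the three-point data (and the latter is not even scale-invariant, while the problem is). The proof in~\cite{Va} exploits precisely this extra rigidity --- that $(\av{w^{p_1}}{J},\av{w^{p_2}}{J})\in\Omega$ for \emph{every} subinterval $J$, equivalently that the curves $\alpha\mapsto x^{\pm}(\alpha)$ move with velocity directed from $x^{\pm}(\alpha)$ toward a point of $\Gamma_1$ --- to manufacture the uniform $c$. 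You should consult that reference rather than try to close the gap with the tools you listed.
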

This lemma was proved in ~\cite{Va}. Using it we prove the following theorem.
\begin{theorem}\label{th1}
For every point $x\in \Omega$ and for every $Q_1>Q$ we have $B_{Q_1}(x)\geqslant \mathcal{B}(x)$.
\end{theorem}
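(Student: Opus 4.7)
The plan is a standard Bellman induction. Fix $Q_1>Q$, take any $w\in A_{p_1,p_2}^Q$ with $(\ave{w^{p_1}},\ave{w^{p_2}})=x\in\Omega\subset\Omega_{Q_1}$, and show that
$$
B_{Q_1}(x)\geqslant |\{t\in I\colon w(t)\geqslant 1\}|;
$$
then taking the supremum over $w$ gives the theorem. First I would reduce to bounded $w$: by Lemma \ref{cutoff} both $\underline{w}_a$ and $\overline{w}_a$ remain in $A_{p_1,p_2}^Q$. Choosing the appropriate truncation depending on $\sig(p_1),\sig(p_2)$ so that the relevant averages are controlled by monotone convergence, and noting that for $a\geqslant1$ the level set $\{w\geqslant1\}$ is unchanged, one reduces to the case of a bounded weight $w$, whose averages on $I$ are close to the original $x$.

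The core of the argument is a single Bellman step: by Lemma \ref{lemma1-2} there exist $I=I^-\sqcup I^+$ with $\alpha^\pm=|I^\pm|\in[\delta,1-\delta]$ such that the points
$$
x^\pm=\bigl(\av{w^{p_1}}{I^\pm},\,\av{w^{p_2}}{I^\pm}\bigr)
$$
satisfy $[x^-,x^+]\subset\Omega_{Q_1}$. Since $x=\alpha^- x^-+\alpha^+ x^+$, the local concavity of $B_{Q_1}$ on $\Omega_{Q_1}$ (Lemma \ref{lemma-1-1} applied to the enlarged domain) gives
$$
B_{Q_1}(x)\geqslant \alpha^- B_{Q_1}(x^-)+\alpha^+ B_{Q_1}(x^+).
$$
The restrictions $w|_{I^\pm}$ (rescaled to $I$) still belong to $A_{p_1,p_2}^Q$, so the same splitting applies to each of them. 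Iterating $n$ times produces a partition $\mathcal{P}_n$ of $I$ into dyadic-like subintervals with
$$
B_{Q_1}(x)\geqslant \sum_{J\in\mathcal{P}_n}|J|\,B_{Q_1}(x^J),
\qquad x^J=\bigl(\av{w^{p_1}}{J},\av{w^{p_2}}{J}\bigr).
$$
Because $\alpha^\pm$ are separated from $0$ and $1$ uniformly in $w$, the mesh of $\mathcal{P}_n$ tends to zero as $n\to\infty$.

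Finally I would pass to the limit $n\to\infty$. By the Lebesgue differentiation theorem, for almost every $t\in I$ the point $x^{J_n(t)}$ associated to the interval $J_n(t)\in\mathcal{P}_n$ containing $t$ converges to $(w(t)^{p_1},w(t)^{p_2})\in\Gamma_1$; by formula \eqref{bound}, $B_{Q_1}$ equals $\mathbf{1}_{[1,\infty)}(v)$ on $\Gamma_1$. Since $w$ is bounded, all $x^J$ lie in a fixed compact subset of $\Omega_{Q_1}$ and $B_{Q_1}$ is bounded (by the boundary values and concavity), so by the bounded convergence theorem
$$
\lim_{n\to\infty}\sum_{J\in\mathcal{P}_n}|J|\,B_{Q_1}(x^J)=\int_I B_{Q_1}\bigl(w(t)^{p_1},w(t)^{p_2}\bigr)\,dt=|\{t\colon w(t)\geqslant 1\}|,
$$
which together with the iterated Bellman inequality yields $B_{Q_1}(x)\geqslant|\{w\geqslant 1\}|$.

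The main obstacle is justifying this passage to the limit rigorously: one needs lower semicontinuity of $B_{Q_1}$ when approaching $\Gamma_1$ from the interior of $\Omega_{Q_1}$ (so that the liminf of $B_{Q_1}(x^J)$ is at least $\mathbf{1}_{[1,\infty)}(v(t))$ on $\Gamma_1$), and one must handle the jump of the boundary value at $v=1$. This is essentially the reason the enlargement $Q_1>Q$ is needed: the segments $[x^-,x^+]$ produced by Lemma \ref{lemma1-2} are only guaranteed to lie in $\Omega_{Q_1}$, not in $\Omega$ itself, so the local concavity inequality can only be invoked for the Bellman function built on the strictly larger domain. Once the limit is justified, taking the supremum over $w\in A_{p_1,p_2}^Q$ completes the proof.
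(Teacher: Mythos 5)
Your proposal is correct and follows essentially the same route as the paper: split $I$ via Lemma \ref{lemma1-2} so that the segment $[x^-,x^+]$ lies in the enlarged domain $\Omega_{Q_1}$, apply local concavity of $B_{Q_1}$, iterate, and pass to the limit using a.e.\ convergence of the step functions $x^n(t)$ to $(w^{p_1}(t),w^{p_2}(t))\in\Gamma_1$ together with the boundary values \eqref{bound}. The only cosmetic difference is that the paper skips your truncation step and instead justifies dominated convergence directly from the bound $B_{Q_1}\leqslant 1$ (established via the extremizers of Section \ref{menshe}); the continuity issue at $v=1$ that you flag is indeed the delicate point, and the paper treats it no more carefully than you do.
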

\begin{collorary}
For every point $x\in \Omega$ we have $B(x)\geqslant \mathcal{B}(x)$.
\end{collorary}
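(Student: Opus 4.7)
The strategy is a straightforward limiting argument: we let $Q_1\to Q^{+}$ in Theorem~\ref{th1} and use continuity of the candidate function in the parameter $Q$.

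Fix $x\in\Omega$. For each $Q_1>Q$, Theorem~\ref{th1} gives $B_{Q_1}(x)\geqslant \mathcal{B}(x)$, where $B_{Q_1}$ is constructed by the formulas \eqref{answerforb} with the parameter $Q$ replaced by $Q_1$. Since $\mathcal{B}(x)$ does not depend on $Q_1$, it suffices to show
\eq[convcla]{\lim_{Q_{1}\to Q^{+}} B_{Q_1}(x)= B(x).}

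The plan for \eqref{convcla} is to track the continuous dependence on $Q_1$ of all the auxiliary quantities appearing in the definition of $B_{Q_1}$. First, the numbers $\gamma_\pm=\gamma_\pm(Q_1)$ are defined (Lemma~\ref{twogammas}) as the two roots of $f(\gamma)=Q_1^{p_2}$; since $f$ is strictly monotone on each side of $\gamma=1$, the roots depend continuously on $Q_1$, and hence $v_\pm(Q_1)=\gamma_\pm(Q_1)^{\mp 1}\gamma_\mp(Q_1)^{\pm 1}$ and $A(Q_1)=Q_1^{-p_2}\gamma_+(Q_1)^{p_2-p_1}$ are continuous in $Q_1$ as well. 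Second, the subdomains $\Omega_{\textup{I}},\ldots,\Omega_{\textup{IV}}$ depend continuously on $Q_1$ through $\gamma_\pm$, and the function $v=v(x;Q_1)$ defined implicitly on $\Omega_{\textup{III}}\cup\Omega_{\textup{IV}}$ by the equations in the last display of Section~\ref{search} depends continuously on $Q_1$ by the implicit function theorem (the Jacobians were computed in Lemma~\ref{depend} and the analogous computation in $\Omega_{\textup{III}}$, and they are nonzero).

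Thus for $x$ in the interior of a fixed piece $\Omega_{\textup{k}}(Q)$, the point $x$ also lies in $\Omega_{\textup{k}}(Q_1)$ for $Q_1$ close enough to $Q$, and the explicit expressions in \eqref{answerforb} give $B_{Q_1}(x)\to B(x)$ by continuity of $v$, $v_\pm$ and $A$. For $x$ on a common boundary between two pieces (either on $\ell_\pm$ or on $\Gamma_Q$), the construction in Section~\ref{search} was arranged precisely so that $B_{Q_1}$ is continuous across the boundaries of its subdomains, so one can pick either expression and the limit is the same, yielding $B_{Q_1}(x)\to B(x)$.

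The only mild subtlety, and what I expect to be the main check, is the case $x\in\Gamma_Q\subset\Omega$: for $Q_1>Q$, the point $x$ lies strictly in the interior of $\Omega_{Q_1}$, so one has to verify that the piece of $\Omega_{Q_1}$ containing $x$ has a well-behaved formula that reduces, as $Q_1\to Q^+$, to the boundary value prescribed on $\Gamma_Q$ (namely $B(x)=1$ on $\Gamma_Q\cap\Omega_{\textup{I}}$ and the formula for $\Omega_{\textup{II}}$ on $\Gamma_Q\cap\Omega_{\textup{II}}$, etc.). This reduces to checking that the formulas in \eqref{answerforb} are continuous in $Q_1$ uniformly on compact subsets of $\Omega$, which in turn follows from the continuous dependence of $\gamma_\pm,v_\pm,A,v$ already noted. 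Combining with $B_{Q_1}(x)\geqslant \mathcal{B}(x)$ and passing to the limit completes the proof.
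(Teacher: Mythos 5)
Your argument is correct and is essentially the paper's own: the authors state the corollary as an immediate consequence of Theorem~\ref{th1} by continuity of $B_{Q}(x)$ in the parameter $Q$, exactly the limiting step you carry out. You merely spell out the chain of continuous dependences ($\gamma_\pm$, $v_\pm$, $A$, implicit $v$) that the paper leaves implicit.
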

\begin{proof}[Proof of Corollary]
It is obvious since $B_{Q}(x)$ is continuous with respect to $Q$.
\end{proof}
\begin{proof}[Proof of the Theorem]
We want to prove that for any function $w\in A_{p_{1}, p_{2}}^{Q}(I)$ and $x=(\ave{w^{p_{1}}}, \ave{w^{p_{2}}})$ it is true that
\begin{equation}\label{tirlimbombom}
B_{Q_1}(x)\geqslant |\{w\geqslant 1\}|.
\end{equation}
Then, passing to the supremum in the right-hand side, we get what we need.
Assume $w\in A_{p_{1}, p_{2}}^{Q}$.
We take a splitting of our interval $I$ by the rule from the Lemma \ref{lemma1-2}. By $D_{n}$ we denote the set of intervals of $n$-th generation. For example, $D_{0}=\{I\}$ and $D_{1}=\{ I^{-}, I^{+}\}$. For every interval $J\in D_{n}$ we denote
$$
x^{J}=(\av{w^{p_{1}}}{J}, \av{w^{p_{2}}}{J}).
$$

Since $B_{Q_1}(x)$ is locally concave, we get
\begin{equation}\label{chitoguritto}
B_{Q_1}(x)\geqslant |I^{-}|B_{Q_1}(x^{I^-})+|I^{+}|B_{Q_1}(x^{I^+})\geqslant \sli_{J\in D_{n}}|J|B_{Q_1}(x^{J})=\ili_{0}^{1} B_{Q_1}(x^{n}(t))dt,
\end{equation}
where
$x^{n}(t)$ is a step-function, defined in the following way: take $J\in D_{n}$ and denote $x^{n}(t)=x^{J}, \; t\in J$.

Since we assume that $w^{p_i}\in L_{1, loc}$, we get
$$
x^{n}(t)\to (w^{p_{1}}(t), w^{p_{2}}(t)) \; a.e.
$$
Moreover, in the Section \ref{menshe} it will be proved that for every $x\in \Omega$ there exists a function $w\in A_{p_{1}, p_{2}}^{Q}$ such that $B(x)=|\{w\geqslant 1\}|$. The same can be shown for $Q_1$ instead of $Q$, so we get $B_{Q_1}(x)\leqslant 1$. Therefore, by the Lebesgue Dominated Convergence Theorem, we can pass to the limit in \eqref{chitoguritto}. Then we get
$$
B_{Q_1}(x)\geqslant \ili_{0}^{1}B_{Q_1}(w^{p_{1}}(t), w^{p_{2}}(t))dt.
$$
But for every $t$ we have $(w^{p_{1}}(t), w^{p_{2}}(t))\in \Gamma$, where we know $B_{Q_1}$ by the Lemma ~\ref{boundlambda}. Therefore,
$$
B_{Q_1}(x)\geqslant |\{t: w(t)\geqslant 1\}|,
$$
which is what we need.
\end{proof}
\section{Proof of concavity}\label{profconc}
In this section we prove the following lemma.
\begin{lemma}
The following inequality holds in the sense of distributions:
$$
\frac{d^{2}B}{dx^{2}}\leqslant 0.
$$
\end{lemma}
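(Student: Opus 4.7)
The plan is to invoke the technical reduction stated in Subsection \ref{special}, which says it suffices to verify (i) that in the interior of each $\Om_{\textup{I}}$, $\Om_{\textup{II}}$, $\Om_{\textup{III}}$, $\Om_{\textup{IV}}$ the classical Hessian of $B$ is nonpositive, and (ii) that $B_{x_2x_2}\leqslant 0$ holds globally in the sense of distributions. Since $B$ is smooth inside each subdomain, the distributional $B_{x_2x_2}$ equals the pointwise $B_{x_2x_2}$ plus Dirac-type contributions along the interfaces $\ell_\pm$ coming from the jumps of $B_{x_2}$, as spelled out in Lemma \ref{specialfunctions}. Thus, once (i) is in hand, (ii) reduces to checking that the jump $[B_{x_2}]$ across each of $\ell_+$ and $\ell_-$ has the correct sign.

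For (i), the subdomains $\Om_{\textup{I}}$ and $\Om_{\textup{II}}$ are immediate: $B$ is constant on the first and affine on the second, so their Hessians vanish. In $\Om_{\textup{III}}$ and $\Om_{\textup{IV}}$, $B$ was built to be linear on the one-parameter family of extremal lines parametrized by $v$, so by construction $\det\!\big(d^{2}B/dx^{2}\big)=0$ and the Hessian has rank at most one. Consequently, negative semidefiniteness is equivalent there to nonpositivity of any nonzero diagonal entry, and I take $B_{x_2x_2}$. Using the Monge--Amp\`ere representation $B_{x_k}=t_k(v)$ one gets $B_{x_2x_2}=t_2'(v)\,v'_{x_2}$; combining the sign of $v'_{x_2}$ given by Lemma \ref{depend} in $\Om_{\textup{IV}}$ (and the analogous one-line computation in $\Om_{\textup{III}}$ based on the equation for $v$) with the explicit $t_2(v)$ from Section \ref{search}, one verifies $t_2'(v)\,v'_{x_2}\leqslant 0$. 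In every case the underlying algebraic inequality is $1-Q^{-p_2}\gamma_+^{p_2-p_1}>0$ (Lemma \ref{estimate}) together with $\gamma_-<1<\gamma_+$.

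For (ii), it remains to verify that $[B_{x_2}]$ has the right sign across both $\ell_+$ and $\ell_-$. Because $B$ is continuous across $\ell_\pm$ by construction (this is exactly how the constant $C$ in $\Om_{\textup{IV}}$ and the affine coefficients $a,b,c$ in $\Om_{\textup{II}}$ were pinned down), the tangential derivative matches on both sides, so the jump of $\nabla B$ is proportional to the normal of $\ell_\pm$, and it suffices to control a single scalar jump. Concretely, on $\ell_+$ I compare the affine $\Om_{\textup{II}}$-formula with the $\Om_{\textup{III}}$-formula $(x_1-v_+^{p_1})/(1-v_+^{p_1})$, and on $\ell_-$ I compare the affine $\Om_{\textup{II}}$-formula with the $\Om_{\textup{III}}$- or $\Om_{\textup{IV}}$-formula (depending on where the interface lives). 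A short computation in each case reduces the required inequality to the positivity of quantities such as $1-v_\pm^{p_1}$, $1-Q^{-p_2}\gamma_\pm^{p_2-p_1}$, and $\gamma_+-\gamma_-$, all of which have already been established.

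The main obstacle is sign bookkeeping. Depending on which of the three cases for $(p_1,p_2)$ one is in, several factors flip sign, and the two interfaces $\ell_\pm$ play asymmetric roles because only the $\ell_+$-family of tangents was used to build $B$ in $\Om_{\textup{IV}}$. I expect the cleanest organization is to express everything in terms of $\sig(p_1)$, $\sig(p_2)$, and $\sig(p_1-p_2)$, so that all three cases can be treated uniformly, with the genuine inequality in every instance reducing to the defining relation \eqref{gamma} for $\gamma_\pm$ combined with Lemma \ref{estimate}.
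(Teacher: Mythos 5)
Your overall strategy coincides with the paper's: reduce to (i) pointwise nonpositivity of the Hessian in the interiors of the four subdomains and (ii) the sign of the distributional $B_{x_2x_2}$, which by Lemma \ref{specialfunctions} amounts to the sign of the jump of $B_{x_2}$ across the interfaces. Your interior treatment is fine and in fact slightly cleaner than the paper's in $\Om_{\textup{III}}$ (the paper computes all second derivatives by brute force there, whereas using $B_{x_k}=t_k(v)$ and the degeneracy of the Hessian, as you do, shortens this). One small caveat: the interior sign checks rest mainly on $\sig(v'_{x_1})=-\sig(p_1)$, $\sig(v'_{x_2})=\sig(p_2)$ and $\sig(1-v_-^{p_1})=\sig(p_1)$, not really on Lemma \ref{estimate}, which enters only in the boundary computation.

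The genuine problem is in step (ii): you have the adjacency structure of the subdomains wrong, so the jump comparisons you propose are at the wrong interfaces. The line $\ell_+$ separates $\Om_{\textup{I}}$ (where $B\equiv 1$, so $B_{x_2}=0$) from $\Om_{\textup{II}}$; there is no $\Om_{\textup{II}}/\Om_{\textup{III}}$ interface on $\ell_+$, and the expression $(x_1-v_+^{p_1})/(1-v_+^{p_1})$ is not the value of $B$ on either side there (indeed $B\equiv 1$ on $\ell_+\cap\Om$, since both endpoints $(1,1)$ and $(v_+^{p_1},v_+^{p_2})$ carry the value $1$). The correct comparison on $\ell_+$ is between $0$ and $b$. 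Likewise, $\Om_{\textup{II}}$ and $\Om_{\textup{IV}}$ meet only at the single tangency point of $\ell_-$ with $\Gamma_Q$, so there is no $\Om_{\textup{II}}$-versus-$\Om_{\textup{IV}}$ jump to compute; the two interfaces carried by $\ell_-$ are $\Om_{\textup{II}}/\Om_{\textup{III}}$ (where one compares $b$ with $B_{x_2}(\ell_-)$ computed from the $\Om_{\textup{III}}$ formula with $v=v_-$) and $\Om_{\textup{III}}/\Om_{\textup{IV}}$, which your plan never addresses. On that last interface the paper verifies that $\nabla B$ is actually continuous (the constant $C$ only enforces continuity of $B$ itself, so the matching of $t_1(v_-)$ and $t_2(v_-)$ with the $\Om_{\textup{III}}$-side derivatives is a separate, nontrivial check); without it the distributional Hessian could pick up a singular term of the wrong sign there. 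As it happens, your miscast comparisons would produce the correct numbers (the spurious formula on $\ell_+$ has vanishing $x_2$-derivative, matching the true $\Om_{\textup{I}}$ value, and $t_2(v_-)$ from $\Om_{\textup{IV}}$ equals $B_{x_2}(\ell_-)$ from $\Om_{\textup{III}}$), but that is an accident; as written, the argument omits one interface and verifies another against a function that is not a branch of $B$.
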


We break the proof of this lemma into parts. According to the paragraph \ref{special}, first we check that in interiors of $\Om_{\textup{I}}$---$\Om_{\textup{IV}}$ the Hessian of $B$ is nonpositive.

Then we study jumps of $B_{x_{1}}$ and $B_{x_{2}}$ over $\ell_{\pm}$.

We warn the reader that this section is rather technical.
\subsection{Domains $\Om_{\textup{I}}$ and $\Om_{\textup{II}}$}
Here $B$ is fully linear and, therefore, $\frac{d^{2}B}{dx^{2}}=0$.
\subsection{Domain $\Om_{\textup{III}}$}
As we know, here
$$
B(x)=\frac{x_{2}-v^{p_{2}}}{1-v^{p_{2}}}=\frac{x_{2}-1}{1-v^{p_{2}}}+1.
$$
Recall that $v^{p_{2}}(1-x_{1})-v^{p_{1}}(1-x_{2})=x_{2}-x_{1}$ so $$
(p_{2}v^{p_{2}-1}(1-x_{1})-p_{1}v^{p_{1}-1}(1-x_{2}))v^{\prime}_{x_{1}}-v^{p_{2}}=-1,$$ or
$$
v^{\prime}_{x_{1}}=v\frac{v^{p_{2}}-1}{\Upsilon},
$$
where
$$
\Upsilon=\Upsilon(v)=p_{2}v^{p_{2}}(1-x_{1})-p_{1}v^{p_{1}}(1-x_{2}).
$$
Put
$$
f(v)=\frac{v^{p_{2}}}{v^{p_{2}}-1}=1+\frac{1}{v^{p_{2}}-1}.
$$
Once again,
$$
B(x)=\frac{x_{2}-v^{p_{2}}}{1-v^{p_{2}}}=\frac{x_{2}-1}{1-v^{p_{2}}}+1,
$$
so
\begin{multline}\label{derivOm3}
B^{\prime}_{x_{1}}=(x_{2}-1)\cdot\frac{p_{2}v^{p_{2}-1}}{(1-v^{p_{2}})^{2}}v^{\prime}_{x_{1}}=p_{2}(x_{2}-1)\cdot\frac{v^{p_{2}-1}}{(1-v^{p_{2}})^{2}}\cdot\frac{v(v^{p_{2}}-1)}{\Upsilon}=\\=p_{2}(x_{2}-1)\cdot\frac{v^{p_{2}}}{v^{p_{2}}-1}\cdot\frac{1}{\Upsilon}=p_{2}(x_{2}-1)\frac{f(v)}{\Upsilon}.
\end{multline}
Observe that
$$
f^{\prime}_{x_{1}}(v)=-p_{2}\cdot\frac{v^{p_{2}-1}}{(v^{p_{2}}-1)^{2}}\cdot v^{\prime}_{x_{1}}=-p_{2}\cdot\frac{f(v)}{\Upsilon},
$$
therefore
\begin{align*}
&B^{\prime\prime}_{x_{1}x_{1}}=\\&=p_{2}(x_{2}-1)\left[-p_{2}\frac{f(v)}{\Upsilon^{2}}-\frac{(p_{2}^{2}v^{p_{2}-1}(1-x_{1})-p_{1}^{2}v^{p_{1}-1}(1-x_{2}))v^{\prime}_{x_{1}}-p_{2}v^{p_{2}}}{\Upsilon^{2}}f(v)\right]\\
&=\frac{p_{2}(1-x_{2})f(v)}{\Upsilon^{2}}\left[p_{2}-p_{2}v^{p_{2}}+(p_{2}^{2}v^{p_{2}-1}(1-x_{1})-p_{1}^{2}v^{p_{1}-1}(1-x_{2}))\frac{v(v^{p_{2}}-1)}{\Upsilon}\right]\\
&=-\frac{p_{2}(x_{2}-1)f(v)}{\Upsilon^{2}}(v^{p_{2}}-1)\left[\frac{p_{2}^{2}v^{p_{2}}(1-x_{1})-p_{1}^{2}v^{p_{1}}(1-x_{2})}{p_{2}v^{p_{2}}(1-x_{1})-p_{1}v^{p_{1}}(1-x_{2})}-p_{2}\right]\\
&=-\frac{p_{2}(x_{2}-1)f(v)(v^{p_{2}}-1)}{\Upsilon^{2}}\cdot\frac{p_{2}p_{1}v^{p_{1}}(1-x_{2})-p_{1}^{2}v^{p_{1}}(1-x_{2})}{\Upsilon}\\
&=-\frac{p_{1}p_{2}(x_{2}-1)f(v)(v^{p_{2}}-1)v^{p_{1}}(1-x_{2})(p_{2}-p_{1})}{\Upsilon^{3}}\\
&=\frac{(x_{2}-1)^{2}(p_{2}-p_{1})v^{p_{1}+p_{2}}}{\Upsilon^{2}}\cdot\frac{p_{1}p_{2}}{\Upsilon}.
\end{align*}
Now we calculate $B^{\prime\prime}_{x_{2}x_{2}}$. We use that
$$
B(x)=\frac{x_{1}-v^{p_{1}}}{1-v^{p_{1}}}=\frac{x_{1}-1}{1-v^{p_{1}}}+1.
$$
By straight-forward calculations we get
$$
B^{\prime\prime}_{x_{2}x_{2}}=\frac{(x_{1}-1)^{2}(p_{2}-p_{1})v^{p_{1}+p_{2}}}{\Upsilon^{2}}\cdot\frac{p_{1}p_{2}}{\Upsilon}.
$$
Using that $\det \frac{d^{2}B}{dx^{2}}=0$, we immediately get
$$
B^{\prime\prime}_{x_{1}x_{2}}=\pm\frac{(1-x_{1})(1-x_{2})v^{p_{1}+p_{2}}(p_{2}-p_{1})}{\Upsilon^{2}}\cdot\frac{p_{1}p_{2}}{\Upsilon},
$$
and we do not care if there is plus or minus.

Finally,
$$
\sli_{i,j}B^{\prime\prime}_{x_{i}x_{j}}\Delta_{i}\Delta_{j}=\frac{v^{p_{1}+p_{2}}(p_{2}-p_{1})}{\Upsilon^{2}}\cdot\frac{p_{1}p_{2}}{\Upsilon}\left((1-x_{1})\Delta_{1}\pm (1-x_{2})\Delta_{2}\right)^{2}.
$$
Recall that
$$
\Upsilon=v\cdot \frac{v^{p_{2}}-1}{v^{\prime}_{x_{1}}},
$$
so
\begin{multline}
\sli_{i,j}B^{\prime\prime}_{x_{i}x_{j}}\Delta_{i}\Delta_{j}=\frac{v^{p_{1}+p_{2}}(p_{2}-p_{1})}{\Upsilon^{2}}\cdot\frac{1}{v}\cdot\frac{p_{1}p_{2}v^{\prime}_{x_{1}}}{v^{p_{2}}-1}\left((1-x_{1})\Delta_{1}\pm (1-x_{2})\Delta_{2}\right)^{2}=\\
=\frac{v^{p_{1}+p_{2}}(p_{2}-p_{1})}{\Upsilon^{2}}\cdot\frac{1}{v}\cdot\frac{p_{2}}{v^{p_{2}}-1}\cdot p_{1}v^{\prime}_{x_{1}}\left((1-x_{1})\Delta_{1}\pm (1-x_{2})\Delta_{2}\right)^{2}.
\end{multline}
Observe that $\sig(v^{p_{2}}-1)=-\sig(p_{2})$, $\sig{v^{\prime}_{x_{1}}}=-\sig{p_{1}}$ and $p_{2}-p_{1}<0$. It gives that
$$
\sli_{i,j}B^{\prime\prime}_{x_{i}x_{j}}\Delta_{i}\Delta_{j}\leqslant 0.
$$
\subsection{Domain $\Om_{\textup{IV}}$}
We know that $$
B^{\prime}_{x_{1}}=t_{1}=\frac{1}{1-A}\cdot\frac{v_{-}^{-\alpha}}{1-v_{-}^{p_{1}}}v^{\frac{(p_{1}-p_{2})A}{1-A}}.$$
We do not need to write the full expression for $\alpha$, since it does not matter for the sign of anything. Moreover, put $V_{-}=\frac{v_{-}^{-\alpha}}{1-v_{-}^{p_{1}}}$. Then we get
$$
B^{\prime\prime}_{x_{1}x_{1}}=\frac{(p_{1}-p_{2})A}{(1-A)^{2}}V_{-}v^{\frac{(p_{1}-p_{2})A}{1-A}-1}v^{\prime}_{x_{1}}.
$$
Similarly,
$$
B^{\prime}_{x_{2}}=t_{2}=-\frac{p_{1}}{p_{2}}\cdot\frac{1}{1-A}V_{-}v^{\frac{p_{1}-p_{2}}{1-A}},
$$
$$
B^{\prime\prime}_{x_{2}x_{2}}=-\frac{p_{1}}{p_{2}}\cdot\frac{p_{1}-p_{2}}{(1-A)^{2}}V_{-}v^{\frac{p_{1}-p_{2}}{1-A}-1}v^{\prime}_{x_{2}},
$$
$$
B^{\prime\prime}_{x_{1}x_{2}}=B^{\prime\prime}_{x_{2}x_{1}}=-\frac{p_{1}}{p_{2}}\cdot\frac{p_{1}-p_{2}}{(1-A)^{2}}V_{-}v^{\frac{p_{1}-p_{2}}{1-A}-1}v^{\prime}_{x_{1}}.
$$
As we know from the Lemma \ref{depend},
$$
v^{\prime}_{x_{1}}=\frac{1}{p_{1}\Pi}\cdot\frac{A}{v^{p_{1}}},
$$
$$
v^{\prime}_{x_{2}}=-\frac{1}{p_{2}\Pi}\cdot\frac{1}{v^{p_{2}}}.
$$
We have
$$
\sig(B^{\prime\prime}_{x_1x_1})=\sig(1-v_-^{p_1})\sig(v^{\prime}_{x_1})=-1.
$$
Similarly $\sig(B^{\prime\prime}_{x_2x_2})=-1$, and since $\det(\frac{d^{2}B}{dx^2})=0$, we get that $\frac{d^{2}B}{dx^2}\leqslant 0$.
\subsection{Boundary}
Now we proceed to ``jumps'' of first derivatives of $B$. We remind that in $\Om_{\textup{II}}$ we have
$$
B(x)=ax_{1}+bx_{2}+c,
$$
where
$$
b=\frac{v_{-}^{p_{2}}}{(v_{-}^{p_{2}}-1)(v_{-}^{p_{1}}-v_{-}^{p_{2}})}.
$$
We also remind the following notation. As before, for a smooth compactly supported test function $\vf$, we introduce two functionals
$$
\Phi_{\pm}(\vf)=\ili \vf(x_{1}, Q^{-p_{2}}\frac{p_{2}}{p_{1}}\gamma_{\pm}^{p_{2}-p_{1}}(x_{1}-1)+1)dx_{1}.
$$

\subsubsection{Boundary $\Om_{\textup{I}}\cap \Om_{\textup{II}}$}
Observe that if $p_{2}>0$ then
$$
B^{\prime}_{x_{2}}=\begin{cases} 0, &x_{2}>Q^{-p_{2}}\frac{p_{2}}{p_{1}}\gamma_{+}^{p_{2}-p_{1}}(x_{1}-1)+1 \\
b, &x_{2}<Q^{-p_{2}}\frac{p_{2}}{p_{1}}\gamma_{+}^{p_{2}-p_{1}}(x_{1}-1)+1\end{cases}.
$$
and if $p_{2}<0$ then
$$
B^{\prime}_{x_{2}}=\begin{cases} b, &x_{2}>Q^{-p_{2}}\frac{p_{2}}{p_{1}}\gamma_{+}^{p_{2}-p_{1}}(x_{1}-1)+1 \\
0, &x_{2}<Q^{-p_{2}}\frac{p_{2}}{p_{1}}\gamma_{+}^{p_{2}-p_{1}}(x_{1}-1)+1\end{cases}.
$$

Therefore, in the sense of distributions,
$$
B^{\prime\prime}_{x_{2}x_{2}}=-\sig(p_{2})b\Phi_{+}.
$$
Notice that $\Phi_+$ is a non-negative functional, and therefore sign of $B^{\prime\prime}_{x_2x_2}$ is defined by the sign of $-\sig(p_2)b$.
Since
$$
b=\frac{v_{-}^{p_{2}}}{(v_{-}^{p_{2}}-1)(v_{-}^{p_{1}}-v_{-}^{p_{2}})},
$$
and
\begin{align}\label{tralyalya1}
&\sig(v_{-}^{p_{2}}-1)=-\sig(p_{2}),\\ \label{tralyalya2}
&v_{-}^{p_{1}}-v_{-}^{p_{2}}=v_{-}^{p_{1}}(1-v_{-}^{p_{2}-p_{1}})<0,
\end{align}
we get
$$
\sig(b)=\sig(p_{2}),
$$
so
$$
B^{\prime\prime}_{x_{2}x_{2}}\leqslant 0.
$$
\subsubsection{Boundary $\Om_{\textup{II}}\cap \Om_{\textup{III}}$}
Our plan is the following. First we count $B^{\prime}_{x_{k}}$ on the line $\frac{x_{2}-1}{v_{-}^{p_{2}}-1}=\frac{x_{1}-1}{v_{-}^{p_{1}}-1}$, i.e. on $\ell_{-}$. Then we proceed to jumps.

Let us evaluate $B_{x_{k}}$ on $\ell_{-}$.
In $\Om_{\textup{III}}$, from \eqref{derivOm3} we have
$$
B_{x_{1}}=p_{2}(x_{2}-1)\frac{v^{p_{2}}}{v^{p_{2}}-1}\cdot\frac{1}{p_{2}v^{p_{2}}(1-x_{1})-p_{1}v^{p_{1}}(1-x_{2})}.
$$
On $\ell_{-}$ we have $v=v_{-}$, so, using the equation of $\ell_{-}$ and canceling $x_{2}-1$, we get:
\begin{multline*}
B_{x_{1}}(\ell_{-})=p_{2}(x_{2}-1)\frac{v_{-}^{p_{2}}}{v_{-}^{p_{2}}-1}\cdot\frac{1}{p_{2}v_{-}^{p_{2}}(1-x_{1})-p_{1}v_{-}^{p_{1}}(1-x_{2})}=\\=p_{2}v_{-}^{p_{2}}\frac{1}{p_{2}v_{-}^{p_{2}}(1-v_{-}^{p_{1}})-p_{1}v_{-}^{p_{1}}(1-v_{-}^{p_{2}})}.
\end{multline*}
Now we use that $(v_{-}^{p_{1}}, v_{-}^{p_{2}})\in\ell_{-}$, i.e.,
$$
v_{-}^{p_{2}}=Q^{-p_{2}}\frac{p_{2}}{p_{1}}\gamma_{-}^{p_{2}-p_{1}}(v_{-}^{p_{1}}-1)+1.
$$
It gives
\begin{multline}
B_{x_{1}}(\ell_{-})=\frac{p_{2}v_{-}^{p_{2}}}{p_{2}v_{-}^{p_{2}}(1-v_{-}^{p_{1}})-p_{2}Q^{-p_{2}}\gamma_{-}^{p_{2}-p_{1}}(1-v_{-}^{p_{1}})v_{-}^{p_{1}}}= \\= \frac{v_{-}^{p_{2}}}{1-v_{-}^{p_{1}}}\cdot\frac{1}{v_{-}^{p_{2}}-Q^{-p_{2}}\gamma_{-}^{p_{2}-p_{1}}v_{-}^{p_{1}}}=\frac{1}{1-v_{-}^{p_{1}}}\cdot\frac{1}{1-Q^{-p_{2}}\gamma_{-}^{p_{2}-p_{1}}v_{-}^{p_{1}-p_{2}}}.
\end{multline}
Observe that $v_{-}=\frac{\gamma_{-}}{\gamma_{+}}$, so
$$
B_{x_{1}}(\ell_{-})=\frac{1}{1-v_{-}^{p_{1}}}\cdot\frac{1}{1-Q^{-p_{2}}\gamma_{+}^{p_{2}-p_{1}}}.
$$
Finally, $\sig(1-v_{-}^{p_{1}})=\sig(p_{1})$ and by Lemma \ref{estimate} we have $1-Q^{-p_{2}}\gamma_{+}^{p_{2}-p_{1}}>0$. Therefore,
$$
\sig(B_{x_{1}}(\ell_{-}))=\sig{p_{1}}.
$$

Similarly,
$$
B_{x_{2}}(\ell_{-})=-\frac{p_{1}}{p_{2}}\cdot\frac{1}{1-Q^{-p_{2}}\gamma_{+}^{p_{2}-p_{1}}}\cdot\frac{v_{-}^{p_{1}-p_{2}}}{1-v_{-}^{p_{1}}},
$$
and
$$
\sig(B_{x_{2}}(\ell_{-}))=-\sig{p_{2}}.
$$
As before, we observe that if $p_{2}>0$ then
$$
B_{x_{2}}=\begin{cases} B_{x_{2}}(\ell_{-}), &x_{2}\geqslant Q^{-p_{2}}\frac{p_{2}}{p_{1}}\gamma_{-}^{p_{2}-p_{1}}(x_{1}-1)+1\\
b, &x_{2}\leqslant Q^{-p_{2}}\frac{p_{2}}{p_{1}}\gamma_{-}^{p_{2}-p_{1}}(x_{1}-1)+1
\end{cases}.
$$
and if $p_{2}<0$ then
$$
B_{x_{2}}=\begin{cases} b, &x_{2}\geqslant Q^{-p_{2}}\frac{p_{2}}{p_{1}}\gamma_{-}^{p_{2}-p_{1}}(x_{1}-1)+1\\
 B_{x_{2}}(\ell_{-}), &x_{2}\leqslant Q^{-p_{2}}\frac{p_{2}}{p_{1}}\gamma_{-}^{p_{2}-p_{1}}(x_{1}-1)+1
\end{cases}.
$$
Therefore,
$$
(B_{x_{2}x_{2}},\vf)=\sig(p_{2})(B_{x_{2}}(\ell_{-})-b)\Phi_{-}(\vf)+\ili B_{x_{2}x_{2}}\vf dx.
$$
Moreover,
$$
\sig(B_{x_{2}}(\ell_{-}))=-\sig(p_{2}),
$$
$$
\sig(b)=\sig(p_{2}),
$$
so
$$
\sig(p_{2})(B_{x_{2}}(\ell_{-})-b)\leqslant 0.
$$
It finishes the proof.
\subsubsection{Boundary $\Om_{\textup{III}}\cap \Om_{\textup{IV}}$}
This is the best boundary since here all derivatives of $B$ are continuous. We check it straightforward. We already know values of $B$ when we approach $\ell_{-}$ from $\Om_{\textup{III}}$.

Observe that in $\Om_{\textup{IV}}$
$$
B(x)=\frac{1}{1-A}\frac{v_{-}^{-\frac{(p_{1}-p_{2})A}{1-A}}}{1-v_{-}^{p_{1}}}v^{\frac{p_{1}-p_{2}}{1-A}} \left( \frac{p_{1}-p_{2}}{p_{2}}v^{p_{2}}+x_{1}v^{p_{2}-p_{1}}-\frac{p_{1}}{p_{2}}x_{2}\right).
$$
Also we know that the solution of Monge-Amp\`{e}re equation satisfies the following: $B^{\prime}_{x_{1}}=t_{1}$, $B^{\prime}_{x_{2}}=t_{2}$. Thus,
$$
B^{\prime}_{x_{1}}=t_{1}=\frac{1}{1-A}\frac{v_{-}^{-\frac{(p_{1}-p_{2})A}{1-A}}}{1-v_{-}^{p_{1}}}v^{\frac{(p_{1}-p_{2})A}{1-A}}.
$$
Now we plug $v=v_{-}$. Then we immediately get
$$
t_{1}(v_{-})=\frac{1}{1-A}\frac{1}{1-v_{-}^{p_{1}}}.
$$
Recalling that $A=Q^{-p_{2}}\gamma_{-}^{p_{2}-p_{1}}$ gives us that $B_{x_{1}}$ is continuous on $\ell_{-}$.

Moreover,
$$
t_{2}=-\frac{p_{1}}{p_{2}}\frac{1}{1-A}\frac{v_{-}^{-\frac{(p_{1}-p_{2})A}{1-A}}}{1-v_{-}^{p_{1}}}v^{\frac{p_{1}-p_{2}}{1-A}},
$$
so
$$
t_{2}(v_{-})=-\frac{p_{1}}{p_{2}}\frac{1}{1-A}\frac{v_{-}^{p_{1}-p_{2}}}{1-v_{-}^{p_{1}}}=B_{x_{2}}(\ell_{-}),
$$
which finishes the proof.
\begin{zamech}
Observe that it was sufficient to check continuity only of one derivative.
\end{zamech}
We shall briefly explain that. Note that the gradient of $B$ is a vector $\nabla B=(B^{\prime}_{x_{1}}, B^{\prime}_{x_{2}})$. Assume we have two basis vectors $e_{1}, e_{2}$ in $\R^{2}$. Then to prove that $\nabla B$ is continuous it is sufficient to prove that $\nabla B\cdot e_{i}$ is continuous for $i=1,2$. We choose $e_{1}$ to be a vector, which is parallel to $\ell_{-}$. Since $\nabla B$ is constant on $\ell_{-}$, we obviously have that $\nabla B\cdot e_{1}$ is continuous. Now we simply choose $e_{2}=(1,0)$ or $e_{2}=(0,1)$, which is it.
\section{The estimate from below: $B\leqslant \mathcal{B}$. Constructing test-functions}\label{menshe}

\subsection{Domain $\Om_{\textup{I}}$}
In the Subsection \ref{subsectionomega1} we have already proved that for every point $x\in \Om_{\textup{I}}$ there is a suitable function $w$ such that $|\{t\colon w(t)\geqslant 1\}|=1$.
\subsection{Domain $\Om_{\textup{II}}$}
We proceed with the same idea as in the Subsection \ref{subsectionomega1}. Take $x\in \Om_{\textup{II}}$ and an interval $[x^{-},x^{+}]\subset \Om_{\textup{II}}$ such that $x\in [x^{-}, x^{+}]$ and $x^{\pm}\in \ell_{\pm}$. We write
$$
x=\la x^{+}+(1-\la)x^{-}.
$$
We also can write that
$$
x_{i}^{\pm}=\mu_{\pm}+(1-\mu_{\pm})v_{\pm}^{p_i}.
$$
We know that test-functions for $x^{\pm}$ are the following functions:
$$
w^{-}(t)=\begin{cases} v_{-}, &t\in [0, 1-\mu_{-}] \\
                       1,     &t\in [1-\mu_{-}, 1]
                       \end{cases};
$$
$$
w^{+}(t)=\begin{cases} 1, &t\in [0, \mu_{+}] \\
                       v_{+},     &t\in [\mu_{+}, 1]
                       \end{cases}.
$$
We wrote this $w^{+}$ to emphasize how we want to ``glue'' $w^{\pm}$ together to get a test-function for $x$. We will do the following: $x^{-}$ has weight $1-\la$ and in the definition of $w^{-}$ we have $v_{-}$ with weight $1-\mu_{-}$. Therefore, we define $w(t)=v_{-}$ on an interval of length $(1-\la)(1-\mu_{-})$. Using the same idea we deduce on which intervals $w$ has to be equal to $1$ and to $v_{+}$. Here is the definition:
$$
w(t)=\begin{cases}
           v_{-}, &t\in [0, (1-\la)(1-\mu_{-})]\\
           1,     &t\in ((1-\la)(1-\mu_{-}), 1-\la+\la\mu_{+}]\\
           v_{+}, &t\in (1-\la+\la\mu_{+}, 1]
    \end{cases}.
$$
Note that
\begin{multline}
$$
\ave{w^{p_{i}}}=v_{-}^{p_{i}}(1-\la)(1-\mu_{-})+1-\la+\la\mu_{+}-(1-\la)+(1-\la)\mu_{-}+v_{+}^{p_{i}}(\la-\la\mu_{+})=\\
=(1-\la)(\mu_{-}+(1-\mu_{-})v_{-}^{p_{i}})+\la(\mu_{+}+(1-\mu_{+})v_{+}^{p_{i}})=x_{i}.
$$
\end{multline}
Now our goal is to prove that for every $[\alpha, \beta]\subset [0,1]$ we have
$$
\av{w^{p_{1}}}{[\alpha, \beta]}^{\frac{1}{p_{1}}}\av{w^{p_{2}}}{[\alpha, \beta]}^{-\frac{1}{p_{2}}}\leqslant Q
$$
and that $B(x)=|\{t\colon w(t)\geqslant 1\}|$.
It follows from the next two lemmata.
\begin{lemma}\label{tilitili}
The point $(\av{w^{p_{1}}}{[\alpha, \beta]}, \av{w^{p_{2}}}{[\alpha, \beta]})$ is a convex combination of $x^{-}$, $x^{+}$ and $(1,1)$ or lies on $\ell_{\pm}$.
\end{lemma}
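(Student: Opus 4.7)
The plan is as follows. The weight $w$ takes the values $v_{-},\,1,\,v_{+}$ on three consecutive subintervals of $[0,1]$ whose lengths
$$
L_1=(1-\la)(1-\mu_{-}),\qquad L_2=\la\mu_{+}+(1-\la)\mu_{-},\qquad L_3=\la(1-\mu_{+})
$$
sum to $1$. For a given $[\alpha,\beta]\subset[0,1]$, let $a,b,c$ be the lengths of its intersections with these three pieces, so $L:=a+b+c=\beta-\alpha$, and the point in question is
$$
y:=(\av{w^{p_1}}{[\alpha,\beta]},\av{w^{p_2}}{[\alpha,\beta]})=\tfrac{a}{L}(v_{-}^{p_1},v_{-}^{p_2})+\tfrac{b}{L}(1,1)+\tfrac{c}{L}(v_{+}^{p_1},v_{+}^{p_2}).
$$

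I would split by which of the three pieces $[\alpha,\beta]$ meets. If it lies inside a single piece, $y$ is one of the three extreme points $(v_{\pm}^{p_1},v_{\pm}^{p_2})$ or $(1,1)$, each of which lies on $\ell_{-}\cup\ell_{+}$. If it meets exactly two adjacent pieces, then either $c=0$ (so $y$ is a convex combination of $(v_{-}^{p_1},v_{-}^{p_2})$ and $(1,1)$, hence lies on $\ell_{-}$) or $a=0$ (and $y\in\ell_{+}$). Since $[\alpha,\beta]$ is connected, the only remaining case is that it meets all three pieces, which forces $b=L_2$ together with $a\in(0,L_1]$, $c\in(0,L_3]$.

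In this remaining case I would write $y$ explicitly as a combination of $x^{-},(1,1),x^{+}$. Matching the coefficients of $(v_{\pm}^{p_1},v_{\pm}^{p_2})$ in the expansions $x^{-}=(1-\mu_{-})(v_{-}^{p_1},v_{-}^{p_2})+\mu_{-}(1,1)$ and $x^{+}=\mu_{+}(1,1)+(1-\mu_{+})(v_{+}^{p_1},v_{+}^{p_2})$ forces
$$
s=\frac{a}{L(1-\mu_{-})},\qquad u=\frac{c}{L(1-\mu_{+})},\qquad t=\frac{L_2}{L}-s\mu_{-}-u\mu_{+},
$$
and a direct check (using $b=L_2$) gives both $s+t+u=1$ and $sx^{-}+t(1,1)+ux^{+}=y$. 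Nonnegativity of $s$ and $u$ is evident, so the entire statement reduces to the single inequality
$$
L_2\geqslant\frac{a\mu_{-}}{1-\mu_{-}}+\frac{c\mu_{+}}{1-\mu_{+}}.
$$

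The right-hand side is linear and increasing in $a$ and $c$, so on the box $[0,L_1]\times[0,L_3]$ it attains its maximum at the corner $(a,c)=(L_1,L_3)$, where it equals $(1-\la)\mu_{-}+\la\mu_{+}=L_2$. Hence $t\geqslant0$ always, with equality exactly when $[\alpha,\beta]=[0,1]$ (recovering $y=x$ on the edge $[x^{-},x^{+}]$ of the triangle). I do not expect any real obstacle: the lemma reduces to this one arithmetic identity, which expresses precisely that the gluing lengths $L_1,L_2,L_3$ were chosen so that the global averages of $w$ reproduce $x=\la x^{+}+(1-\la)x^{-}$; the same calculation then goes through on every subinterval.
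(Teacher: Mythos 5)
Your proof is correct and follows essentially the same route as the paper: reduce to the case where $[\alpha,\beta]$ meets all three constancy pieces, expand the average in terms of $(v_\pm^{p_1},v_\pm^{p_2})$ and $(1,1)$, and regroup into a combination of $x^\pm$ and $(1,1)$ with the same coefficients the paper obtains. The only cosmetic difference is that you verify nonnegativity of the $(1,1)$-coefficient by a monotonicity/corner argument, whereas the paper simplifies it directly to the manifestly nonnegative expression $\frac{1}{\beta-\alpha}\bigl(\alpha\frac{\mu_-}{1-\mu_-}+(1-\beta)\frac{\mu_+}{1-\mu_+}\bigr)$.
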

\begin{lemma}\label{tralivali}
$B(x)=|\{t\colon w(t)\geqslant 1\}|$.
\end{lemma}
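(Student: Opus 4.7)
The plan is to compute both sides of the asserted equality directly and check that they agree.

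For the right-hand side, I first compute $|\{t:w(t)\geqslant 1\}|$ using the explicit piecewise definition of $w$. Since $v_-=\gamma_-/\gamma_+<1$ and $v_+=\gamma_+/\gamma_->1$, the set $\{t:w(t)\geqslant 1\}$ is precisely $[(1-\lambda)(1-\mu_-),1]$, with measure
$$
1-(1-\lambda)(1-\mu_-)=\lambda+(1-\lambda)\mu_-.
$$
This step is just careful bookkeeping with interval lengths.

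For the left-hand side, the key observation is that $B$ is affine on $\Omega_{\textup{II}}$, namely $B(x)=ax_1+bx_2+c$. Since $x=\lambda x^++(1-\lambda)x^-$, affinity yields
$$
B(x)=\lambda B(x^+)+(1-\lambda)B(x^-).
$$
Now $x^\pm\in\ell_\pm$, and $\ell_\pm$ is the segment joining $(1,1)$ and $(v_\pm^{p_1},v_\pm^{p_2})$; writing $x^\pm=\mu_\pm(1,1)+(1-\mu_\pm)(v_\pm^{p_1},v_\pm^{p_2})$ and again using affinity of $B$, I get
$$
B(x^-)=\mu_-\cdot B(1,1)+(1-\mu_-)\cdot B(v_-^{p_1},v_-^{p_2}),\qquad B(x^+)=\mu_+\cdot B(1,1)+(1-\mu_+)\cdot B(v_+^{p_1},v_+^{p_2}).
$$
The required boundary values $B(1,1)=1$, $B(v_-^{p_1},v_-^{p_2})=0$, $B(v_+^{p_1},v_+^{p_2})=1$ are precisely the three equations used to determine $a,b,c$ in the construction of $B$ on $\Omega_{\textup{II}}$, so they hold by construction. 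Hence $B(x^-)=\mu_-$ and $B(x^+)=1$, giving $B(x)=\lambda+(1-\lambda)\mu_-$.

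Comparing the two computations finishes the proof. There is no real obstacle here: the whole point of how $B$ was defined on $\Omega_{\textup{II}}$ (affine, with the correct values at the three corners $(1,1)$, $(v_-^{p_1},v_-^{p_2})$, $(v_+^{p_1},v_+^{p_2})$) was to make this matching automatic, and the definition of $w$ was engineered so that the weight $1-\lambda+\lambda\mu_+-(1-\lambda)(1-\mu_-)$ of the middle piece exactly encodes the convex combination on the Bellman side. The only thing to watch is the use of Lemma \ref{tilitili} to guarantee the test function really lies in $A_{p_1,p_2}^Q$, which however belongs to the companion lemma rather than to this one.
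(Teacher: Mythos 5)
Your proof is correct and follows essentially the same route as the paper: both compute $|\{w\geqslant 1\}|=1-(1-\lambda)(1-\mu_-)$ directly from the definition of $w$, and both use the affinity of $B$ on the closure of $\Omega_{\textup{II}}$ together with the built-in boundary values $B(1,1)=1$, $B(v_-^{p_1},v_-^{p_2})=0$, $B(v_+^{p_1},v_+^{p_2})=1$ to get $B(x)=\lambda+(1-\lambda)\mu_-$. Your remark that the $A_{p_1,p_2}^Q$ membership belongs to the companion lemma is also consistent with how the paper organizes the argument.
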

\begin{proof}[Proof of the Lemma \ref{tilitili}]
Proof of this lemma is similar to the proof of the Lemma \ref{kusochnaya}.
\begin{center}
\includegraphics{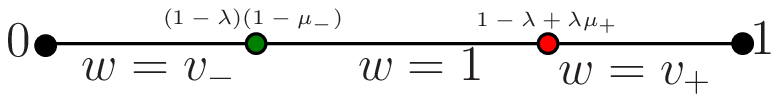}
\end{center}

It is easy to see that the only interesting case is $\alpha<(1-\la)(1-\mu_{-})$ and $\beta>1-\la+\la\mu_{+}$. In other cases we have a convex combination of $v_{-}$ and $(1,1)$ or a convex combination of $v_{+}$ and $(1,1)$,
which follows exactly from the Lemma \ref{kusochnaya}.
If $\alpha<(1-\la)(1-\mu_{-})$ and $\beta>1-\la+\la\mu_{+}$ then
\begin{align*}
\av{w^{p_{i}}}{[\alpha, \beta]}&=\frac{1}{\beta-\alpha}\Bigl[v_{-}^{p_{i}}((1-\la)(1-\mu_{-})-\alpha)+(\la\mu_{+}+(1-\la)\mu_{-})+\\
&\qquad\qquad\qquad\qquad\qquad\qquad+v_{+}^{p_{i}}(\beta-1+\la-\la\mu_{+}) \Bigr]=\\
&= \frac{1}{\beta-\alpha}\Bigl[\frac{((1-\la)(1-\mu_{-})-\alpha)}{1-\mu_{-}}\cdot x_{i}^{-}+ \frac{\beta-1+\la-\la\mu_{+}}{1-\mu_{+}}\cdot x_{i}^{+}+\\
&\qquad\qquad\qquad\qquad\qquad\qquad+\left( \alpha \frac{\mu_{-}}{1-\mu_{-}} + (1-\beta) \frac{ \mu_{+}}{1-\mu_{+}}\right)\cdot 1\Bigr].
\end{align*}
Note that the sum of all coefficients is equal to one, so we have a convex combination of $x^{\pm}$ and $(1,1)$.
\end{proof}
\begin{proof}[Proof of the Lemma \ref{tralivali}]
Since $B$ is linear in $clos(\Om_{\textup{II}})$ we get
\begin{multline}
B(x)=\la B(x^{+})+(1-\la)B(x^{-})=\\=\la (1-\mu_{+})B(v_{+}^{p_{1}}, v_{+}^{p_{2}})+\la\mu_{+}B(1,1)+(1-\la)\mu_{-}B(1,1)+(1-\la)(1-\mu_{-})B(v_{-}^{p_{1}}, v_{-}^{p_{2}})=\\=\la (1-\mu_{+})+\la\mu_{+}+(1-\la)\mu_{-}=\la+(1-\la)\mu_{-};
\end{multline}
$$
|\{w\geqslant 1\}|=1-(1-\la)(1-\mu_{-})=\la+\mu_{-}-\la\mu_{-},
$$
which finishes the proof.
\end{proof}
\subsection{Domain $\Om_{\textup{III}}$}
Take a point $x\in \Om_{\textup{III}}$ and connect it with the point $(1,1)$. Take a number $v<1$, such that the point $(v^{p_{1}}, v^{p_{2}})$ lies on the line segment, which connects $(1,1)$ and $x$.
Then there is a number $\mu\in [0,1]$ such that $x_{1}=\mu+(1-\mu)v^{p_{1}}$, and $x_{2}=\mu + (1-\mu) v^{p_{2}}$.

Denote
$$
w(t)=\begin{cases} 1, &t\in [0, \mu] \\
                   v, &t\in [\mu, 1]
                   \end{cases}.
$$
By the Lemma \ref{kusochnaya}, $w\in A_{p_{1}, p_{2}}^{Q}$ and $\ave{w^{p_{k}}}=x_{k}$.

Moreover, since $v<1$, we get
$$
\mathcal{B}(x)\geqslant|\{t\colon w(t)\geqslant 1\}|=\mu=\frac{x_{1}-v^{p_{1}}}{1-v^{p_{1}}}=B(x).
$$
\subsection{Domain $\Om_{\textup{IV}}$}
Our plan is the following. First we consider $x\in \Gamma_{Q}$. After we build a suitable function $w$ for every such $x$, it will be easy to construct a function for every $x\in \Om_{\textup{IV}}$.
\subsubsection{The case $x\in\Gamma_{Q}$}
Take $x\in \Gamma_{Q}\cap \Om_{\textup{IV}}$. As usual, in $\Om_{\textup{IV}}$ we have
$$
x_{1}=\gamma_{+}^{p_{1}}v^{p_{1}},
$$
$$
x_{2}=Q^{-p_{2}}\gamma_{+}^{p_{2}}v^{p_{2}}.
$$
To introduce $w$ we need some notation. First, choose $\nu$ such that
$$
\frac{1}{1-\nu p_{1}}=\gamma_{+}^{p_{1}}.
$$
Take now
$$
a=\left(\frac{v}{v_{-}}\right)^{\frac{1}{\nu}}.
$$
We also remind that
$$
v_{-}=\frac{\gamma_{-}}{\gamma_{+}}.
$$
Now denote
$$
w(t)=\begin{cases}1, &t\in [0, \frac{\gamma_{-}^{p_{1}}-v_{-}^{p_{1}}}{1-v_{-}^{p_{1}}}\cdot a], \\
                  v_{-}, &t\in [\frac{\gamma_{-}^{p_{1}}-v_{-}^{p_{1}}}{1-v_{-}^{p_{1}}}\cdot a, a],\\
                  v_{-}\cdot \left(\frac{a}{t}\right)^{\nu}, &t\in[a,1].
\end{cases}
$$
As usual, here is what we want.
\begin{lemma}\label{koordinatiw}
$\ave{w^{p_{k}}}=x_{k}$.
\end{lemma}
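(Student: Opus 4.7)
My plan is to evaluate $\ave{w^{p_k}} = \int_0^1 w^{p_k}(t)\,dt$ directly by splitting the integral according to the three-piece definition of $w$, then reduce the result to $x_k$ using the defining relations for $\nu$, $a$, $v_-$, and $\gamma_\pm$. Writing $\alpha_1 = (\gamma_-^{p_1} - v_-^{p_1})/(1 - v_-^{p_1})$, one obtains
$$
\ave{w^{p_k}} = \alpha_1 a + v_-^{p_k} a (1 - \alpha_1) + v_-^{p_k} a^{\nu p_k} \int_a^1 t^{-\nu p_k}\,dt.
$$
Evaluating the last integral and using the identity $a^\nu = v/v_-$, this reduces to an expression linear in $a$ and in $v^{p_k}$, and the task is to check that the $a$-coefficient vanishes while the $v^{p_k}$-coefficient equals the prescribed value.

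For $k=1$, the coefficient of $a$ is $\alpha_1(1 - v_-^{p_1}) + v_-^{p_1} - v_-^{p_1}/(1 - \nu p_1)$. The choice of $\alpha_1$ collapses the first two terms into $\gamma_-^{p_1}$, and the definition $1/(1 - \nu p_1) = \gamma_+^{p_1}$ together with $v_- = \gamma_-/\gamma_+$ gives $v_-^{p_1}/(1 - \nu p_1) = \gamma_-^{p_1}$, so the $a$-coefficient is zero. The remaining $v^{p_1}$-term is $v^{p_1}/(1 - \nu p_1) = \gamma_+^{p_1} v^{p_1} = x_1$.

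For $k=2$ the bookkeeping is more delicate and relies on two identities. First, $1/(1 - \nu p_2) = Q^{-p_2}\gamma_+^{p_2}$: substituting $\nu = (1 - \gamma_+^{-p_1})/p_1$, this rearranges to exactly the defining equation \eqref{gamma} for $\gamma_+$, so it holds by Lemma \ref{twogammas}. Second, $\alpha_1(1 - v_-^{p_2}) + v_-^{p_2} = Q^{-p_2}\gamma_-^{p_2}$; geometrically this says that the point $(\gamma_-^{p_1}, Q^{-p_2}\gamma_-^{p_2})$ lies on the line through $(1,1)$ and $(v_-^{p_1}, v_-^{p_2})$, which is precisely $\ell_-$ by Lemma \ref{lemma1} applied at $v=1$ (the tangent point being $(\gamma_-^{p_1}, Q^{-p_2}\gamma_-^{p_2})$, and the second intersection with $\Gamma_1$ being $(v_-^{p_1}, v_-^{p_2})$ by the Corollary after Remark \ref{zamechanie}). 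Combining the two identities, the coefficient of $a$ becomes $Q^{-p_2}\gamma_-^{p_2} - Q^{-p_2}\gamma_+^{p_2} v_-^{p_2} = 0$, and the remaining $v^{p_2}$-term is $Q^{-p_2}\gamma_+^{p_2} v^{p_2} = x_2$.

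The main obstacle is not conceptual but algebraic: recognizing that $1/(1 - \nu p_2) = Q^{-p_2}\gamma_+^{p_2}$ is a restatement of \eqref{gamma}, and reinterpreting the required $\alpha_1$-identity as the geometric statement that three collinear points sit on $\ell_-$. Once both ingredients are identified, the three-piece decomposition produces $x_1$ and $x_2$ without further work.
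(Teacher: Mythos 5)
Your proof is correct and follows essentially the same route as the paper: both compute the three-piece integral, and for $k=2$ both hinge on the same two facts, namely that $1/(1-\nu p_2)=Q^{-p_2}\gamma_+^{p_2}$ (a rearrangement of equation \eqref{gamma}) and that $(1,1)$, $(\gamma_-^{p_1},Q^{-p_2}\gamma_-^{p_2})$, $(v_-^{p_1},v_-^{p_2})$ are collinear on $\ell_-$. The only difference is cosmetic — you organize the computation as a single formula with coefficient matching, whereas the paper writes out $k=1$ as a chain of equalities and then remarks that $k=2$ is identical after the two identities are in hand.
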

\begin{lemma}\label{normofw}
$w\in A_{p_{1}, p_{2}}^{Q}$.
\end{lemma}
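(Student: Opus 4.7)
The plan is to fix an arbitrary subinterval $J=[\alpha,\beta]\subset I$ and show that the average point $P_J=(\av{w^{p_1}}{J},\av{w^{p_2}}{J})$ lies in $\Om$. The lower Hölder bound is automatic, so only the upper bound $P_{J,1}^{1/p_1}\leqslant Q\, P_{J,2}^{1/p_2}$ needs work. The natural case split is by which of the three pieces of the definition of $w$ the interval $J$ meets; denote the break-points $s_1=\frac{\gamma_-^{p_1}-v_-^{p_1}}{1-v_-^{p_1}}a$ and $s_2=a$.

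\emph{Case 1: $J\subset[0,s_2]$.} Here $w|_J$ takes only the values $1$ and $v_-$. Lemma \ref{kusochnaya} applies verbatim provided the segment connecting $(1,1)$ and $(v_-^{p_1},v_-^{p_2})$ lies in $\Om$. This segment is exactly the part of $\ell_-$ between these two points on $\Gamma_1$, which lies in $\Om$ because $\ell_-$ is tangent to $\Gamma_Q$ and meets $\Gamma_1$ at both endpoints. So $P_J\in\Om$.

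\emph{Case 2: $J\subset[s_2,1]$, i.e.\ $J$ lies in the power piece.} Here $w(t)=v_-(a/t)^\nu$, so $w^{p_k}(t)=v_-^{p_k}a^{\nu p_k}t^{-\nu p_k}$, and
\[
\av{w^{p_k}}{J}=\frac{v_-^{p_k}a^{\nu p_k}}{1-\nu p_k}\cdot\frac{\beta^{1-\nu p_k}-\alpha^{1-\nu p_k}}{\beta-\alpha}.
\]
This is the main technical step. Using $1-\nu p_1=\gamma_+^{-p_1}$ (from the definition of $\nu$) together with \eqref{gamma} to express $1-\nu p_2$ in a compatible form, one gets explicit formulas for $P_{J,1}^{1/p_1}/P_{J,2}^{1/p_2}$. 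I would then show that this ratio is maximized, as $(\alpha,\beta)$ varies over $\{s_2\leqslant\alpha<\beta\leqslant1\}$, at $(\alpha,\beta)=(s_2,1)$ and that there it equals $Q$; equivalently, that the curve $s\mapsto(\av{w^{p_1}}{[s_2,s]},\av{w^{p_2}}{[s_2,s]})$ runs from $(v_-^{p_1},v_-^{p_2})$ to a point on $\Gamma_Q$ and stays below $\Gamma_Q$ in between. This is where the specific matching of the exponent $\nu$ to $\gamma_+$ is essential.

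\emph{Case 3: $J$ crosses between pieces.} The goal is to express $P_J$ as a convex combination of points already shown to lie in $\Om$. Writing $J$ as a union of its intersections with each of the three pieces and using that for each piece the restricted averages lie in $\Om$ (by Cases 1 and 2 applied to those sub-intervals), we get $P_J$ as a weighted average of such points. Since $\Om$ is not convex near $\Gamma_Q$, convexity of $\Om$ is not the right tool; instead, one uses that the relevant intermediate points all lie on the single tangent segment $\ell_-$ to $\Gamma_Q$ from $(1,1)$ through $(v_-^{p_1},v_-^{p_2})$, together with points on or below $\Gamma_Q$ produced by Case~2. Since $\ell_-$ lies in $\Om$ and the region below $\Gamma_Q$ bounded by $\ell_-$ is a convex subset of $\Om$, all convex combinations appearing here stay in $\Om$.

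The main obstacle will be Case~2: identifying the correct monotonicity / convexity statement for the ratio $\av{w^{p_1}}{[\alpha,\beta]}^{1/p_1}/\av{w^{p_2}}{[\alpha,\beta]}^{1/p_2}$ as a function of $(\alpha,\beta)$, and verifying it using the algebraic identities coming from the definitions of $\gamma_\pm$, $\nu$ and $a$. Once this extremal-ratio statement is in place, Cases~1 and~3 follow cleanly from Lemma \ref{kusochnaya} and the geometry of $\ell_-$ relative to $\Gamma_Q$.
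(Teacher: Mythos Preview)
Your Case 2 contains a factual slip: the ratio over $[a,\beta]$ is $Q\cdot g(\alpha/\beta)$ with $g(0)=1$ and $g(s)<1$ for $s>0$, so the maximum over $\{a\leqslant\alpha<\beta\leqslant1\}$ is strictly below $Q$; the endpoint $(\av{w^{p_1}}{[a,1]},\av{w^{p_2}}{[a,1]})$ does \emph{not} lie on $\Gamma_Q$. This is harmless for Case 2 itself but it knocks out the geometric picture you invoke in Case 3.

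The real gap is Case 3. You want to write $P_J$ as a convex combination of $(1,1)$, $(v_-^{p_1},v_-^{p_2})$ and a Case-2 average $P'$, and then appeal to convexity of ``the region below $\Gamma_Q$ bounded by $\ell_-$''. That region is bounded by a line and a curve meeting tangentially at $(\gamma_-^{p_1},Q^{-p_2}\gamma_-^{p_2})$ and is not convex; and you have no control on where $P'$ sits relative to the tangent lines from $(1,1)$. Worse, the paper's own computation shows that when $\alpha=0$ and $\beta>a$ the point $P_{[0,\beta]}$ lies \emph{exactly} on $\Gamma_Q$, so as $\alpha\downarrow0$ your triangle of combination points is squeezed against the nonconvex boundary and no ``convex subregion of $\Om$'' argument can close the estimate. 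The paper avoids this entirely: it first proves the pure power function $u(t)=v_-(a/t)^\nu$ is in $A_{p_1,p_2}^Q$ on all of $[0,1]$ (the $g(s)\leqslant1$ calculation), then applies the cut-off Lemma \ref{cutoff} to get $\underline{u}_{v_-}\in A_{p_1,p_2}^Q$, which instantly handles every interval with $\alpha\geqslant s_1$. For the remaining case $\alpha<s_1$, $\beta>a$ it computes the closed form $\av{w^{p_k}}{J}=(x_k\beta^{1-\nu p_k}-\alpha)/(\beta-\alpha)$, changes variables to $s=\alpha/\beta$, $t=x_1\beta^{-\nu p_1}$, shows there is no interior critical point, and checks the four boundary pieces one by one. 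Your convex-geometry plan does not see the cancellation producing that closed form, and without it the mixed-interval case does not go through.
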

\begin{lemma}\label{ravenstvoBx}
$|\{w\geqslant 1\}|=B(x).$
\end{lemma}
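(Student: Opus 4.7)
\medskip

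\noindent\textbf{Proof plan.} The plan is to compute both sides directly and match them via the identities governing $\gamma_\pm$ and $A$. For the left-hand side, examine the three pieces of $w$ in turn. On $[0,\tfrac{\gamma_-^{p_1}-v_-^{p_1}}{1-v_-^{p_1}}\,a]$ we have $w\equiv 1$, so this whole piece counts. On the middle piece $w\equiv v_-<1$, so it does not count. On $[a,1]$ we have $w(t)=v_-(a/t)^\nu$; from $\tfrac{1}{1-\nu p_1}=\gamma_+^{p_1}$ one reads off $\nu p_1=1-\gamma_+^{-p_1}$, and since $\gamma_+>1$ the signs of $\nu$ and $p_1$ agree, which forces $w(t)\leqslant v_-<1$ on this piece. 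So $|\{w\geqslant1\}|=\tfrac{\gamma_-^{p_1}-v_-^{p_1}}{1-v_-^{p_1}}\,a$, and using $v_-=\gamma_-/\gamma_+$ this becomes
\[
|\{w\geqslant1\}|=\frac{v_-^{p_1}(\gamma_+^{p_1}-1)}{1-v_-^{p_1}}\,a.
\]

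\noindent For $B(x)$, it is cleanest to use the parametric description $B=t_0+t_1 x_1+t_2 x_2$ from Subsection \ref{MongeAmpere}, where the tangent line is $\ell_+(v)$. For $x\in\Gamma_Q\cap\Om_{\textup{IV}}$ one has $x=(a_+^{p_1},Q^{-p_2}a_+^{p_2})$ with $a_+=\gamma_+ v$, so the parameter of the tangent passing through $x$ coincides with the $v$ appearing in our definition of $w$. Since $B(v^{p_1},v^{p_2})=0$ gives $t_0+v^{p_1}t_1+v^{p_2}t_2=0$, plugging in $x_1=\gamma_+^{p_1}v^{p_1}$ and $x_2=A\gamma_+^{p_1}v^{p_2}$ (using $A\gamma_+^{p_1}=Q^{-p_2}\gamma_+^{p_2}$) yields
\[
B(x)=t_1 v^{p_1}(\gamma_+^{p_1}-1)+t_2 v^{p_2}(A\gamma_+^{p_1}-1).
\]
A short check confirms that the two powers of $v$ here agree: both exponents $p_1+\tfrac{p_1}{\gamma_+^{p_1}-1}$ and $p_2+\tfrac{p_1-p_2}{1-A}$ simplify to $\tfrac{p_1\gamma_+^{p_1}}{\gamma_+^{p_1}-1}$, via the identity $1-A=\tfrac{(p_1-p_2)A(\gamma_+^{p_1}-1)}{p_1}$ from the proof of Lemma \ref{estimate}. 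Therefore
\[
B(x)=C\,v^{\frac{p_1\gamma_+^{p_1}}{\gamma_+^{p_1}-1}}\Bigl[(\gamma_+^{p_1}-1)-\tfrac{p_1}{p_2}(A\gamma_+^{p_1}-1)\Bigr].
\]

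\noindent The final step is to collapse the bracket using the defining equation $(p_1-p_2)A\gamma_+^{p_1}=p_1-p_2 A$ for $\gamma_+$; a direct rearrangement gives
\[
(\gamma_+^{p_1}-1)-\tfrac{p_1}{p_2}(A\gamma_+^{p_1}-1)=(1-A)(\gamma_+^{p_1}-1).
\]
Combined with $C=\tfrac{1}{1-A}\cdot\tfrac{v_-^{-p_1/(\gamma_+^{p_1}-1)}}{1-v_-^{p_1}}$, this reduces $B(x)$ to $\tfrac{\gamma_+^{p_1}-1}{1-v_-^{p_1}}\,v^{p_1}(v/v_-)^{p_1/(\gamma_+^{p_1}-1)}$. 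The last bookkeeping step uses $\tfrac{p_1}{\gamma_+^{p_1}-1}=\tfrac{1}{\nu\gamma_+^{p_1}}$ and $v/v_-=a^\nu$ to turn $(v/v_-)^{p_1/(\gamma_+^{p_1}-1)}$ into $a^{1/\gamma_+^{p_1}}$, while $v^{p_1}=v_-^{p_1}a^{1-1/\gamma_+^{p_1}}$; multiplying gives $v^{p_1}(v/v_-)^{p_1/(\gamma_+^{p_1}-1)}=v_-^{p_1}a$. Hence $B(x)=\tfrac{v_-^{p_1}(\gamma_+^{p_1}-1)\,a}{1-v_-^{p_1}}$, matching $|\{w\geqslant 1\}|$.

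\noindent The main obstacle is purely bookkeeping: four different exponents involving $\nu$, $A$, $p_1$, $p_2$, $\gamma_+$, and $v_-$ must be collapsed consistently. The leverage points are the three clean identities $1-A=\tfrac{(p_1-p_2)A(\gamma_+^{p_1}-1)}{p_1}$, $(p_1-p_2)A\gamma_+^{p_1}=p_1-p_2A$, and $1/\nu=\tfrac{p_1\gamma_+^{p_1}}{\gamma_+^{p_1}-1}$, together with $\gamma_-=v_-\gamma_+$; once one recognizes that $v^{p_1}(v/v_-)^{p_1/(\gamma_+^{p_1}-1)}=v_-^{p_1}a$, the two expressions coincide without any sign case analysis.
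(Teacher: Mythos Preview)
Your proof is correct and follows essentially the same route as the paper: both compute $|\{w\geqslant1\}|$ directly from the piecewise definition of $w$, then evaluate $B(x)$ on $\Gamma_Q$ and collapse everything using the identities tying together $A$, $\gamma_+$, $\nu$, and $v_-$. Your organization via the parametric form $B=t_0+t_1x_1+t_2x_2$ and the relation $t_0+v^{p_1}t_1+v^{p_2}t_2=0$ is slightly cleaner than the paper's direct substitution into the $\Omega_{\textup{IV}}$ formula, but the algebraic content is the same.

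One small slip: from $\nu p_1=1-\gamma_+^{-p_1}$ you conclude that ``the signs of $\nu$ and $p_1$ agree.'' In fact $\nu>0$ in \emph{all} cases (when $p_1<0$ the right-hand side is negative, so $\nu p_1<0$ with $p_1<0$ still gives $\nu>0$). This is what you actually need, since $\nu>0$ and $t\geqslant a$ give $(a/t)^\nu\leqslant1$ and hence $w(t)\leqslant v_-<1$ on $[a,1]$. The conclusion is unaffected.
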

We prove the Lemma \ref{normofw} in the Section \ref{calcofthenorm}. Let us prove Lemmata \ref{koordinatiw} and \ref{ravenstvoBx}.
\begin{proof}[Proof of the Lemma \ref{koordinatiw}]
For $k=1$ we perform a direct calculation:
\begin{multline}
\ave{w^{p_{1}}}=\frac{\gamma_{-}^{p_{1}}-v_{-}^{p_{1}}}{1-v_{-}^{p_{1}}}a+v_{-}^{p_{1}}\frac{1-\gamma_{-}^{p_{1}}}{1-v_{-}^{p_{1}}}a+v_{-}^{p_{1}}a^{\nu p_{1}}\frac{1}{1-\nu p_{1}}(1-a^{1-\nu p_{1}})=\\=\gamma_{-}^{p_{1}}a+v_{-}^{p_{1}}a^{\nu p_{1}}\frac{1}{1-\nu p_{1}}-v_{-}^{p_{1}}a\frac{1}{1-\nu p_{1}}=
\gamma_{-}^{p_{1}}a+v^{p_{1}}\gamma_{+}^{p_{1}}-v_{-}^{p_{1}}\gamma_{+}^{p_{1}}a=\\=\gamma_{-}^{p_{1}}a+x_{1}-\gamma_{-}^{p_{1}}a=x_{1}.
\end{multline}
For $k=2$ we need the following:
\begin{equation}\label{vazhnoeravenstvo}
\frac{1}{1-\nu p_{2}}=Q^{-p_{2}}\gamma_{+}^{p_{2}}.
\end{equation}
To prove it take equation \eqref{gamma}:
$$
Q^{-p_{2}}\left(1-\frac{p_{2}}{p_{1}}\right)\gamma_{+}^{p_{2}}=
1-\frac{p_{2}}{p_{1}}Q^{-p_{2}}\gamma_{+}^{p_{2}-p_{1}}.
$$
Multiplying it by $Q^{p_{2}}\gamma_+^{-p_{2}}$ we get:
$$
1-\frac{p_{2}}{p_{1}}=Q^{p_{2}}\gamma_{+}^{-p_{2}}-\frac{p_{2}}{p_{1}}\gamma_{+}^{-p_{1}}=Q^{p_{2}}\gamma_{+}^{-p_{2}}-\frac{p_{2}}{p_{1}}(1-\nu p_{1}),
$$
so
$$
Q^{p_{2}}\gamma_{+}^{-p_{2}}=1-\frac{p_{2}}{p_{1}}+\frac{p_{2}}{p_{1}}-\nu p_{2}=1-\nu p_{2},
$$
which is what we need.

Observe also that points $(1,1)$, $(\gamma_{-}^{p_{1}}, Q^{-p_{2}}\gamma_{-}^{p_{2}})$ and $(v_{-}^{p_{1}}, v_{-}^{p_{2}})$ lie on a line $\ell_{-}$. Therefore, we have the following equation:
$$
\frac{\gamma_{-}^{p_{1}}-v_{-}^{p_{1}}}{1-v_{-}^{p_{1}}}=\frac{Q^{-p_{2}}\gamma_{-}^{p_{2}}-v_{-}^{p_{2}}}{1-v_{-}^{p_{2}}}.
$$
Now the calculation for $\ave{w^{p_{2}}}$ is exactly the same as for $\ave{w^{p_{1}}}$.
\end{proof}
The equation \eqref{vazhnoeravenstvo} is very useful for us, so we want to put it as lemma.
\begin{lemma}
In our notation, we have
$$
\frac{1}{1-\nu p_{1}}=\gamma_{+}^{p_{1}},
$$
$$
\frac{1}{1-\nu p_{2}}=Q^{-p_{2}}\gamma_{+}^{p_{2}}.
$$
Consequently,
$$
x_{k}=\frac{v^{p_{k}}}{1-\nu p_{k}}.
$$
\end{lemma}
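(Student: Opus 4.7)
The first identity is immediate: it is precisely the definition of $\nu$ introduced just before the lemma, namely $\frac{1}{1-\nu p_1} = \gamma_+^{p_1}$.

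The second identity was in fact already derived inside the proof of Lemma \ref{koordinatiw} as equation \eqref{vazhnoeravenstvo}, so my plan is to isolate that computation. I would start from the defining equation \eqref{gamma} for $\gamma_+$,
$$
Q^{-p_{2}}\left(1-\tfrac{p_{2}}{p_{1}}\right)\gamma_{+}^{p_{2}} = 1-\tfrac{p_{2}}{p_{1}}Q^{-p_{2}}\gamma_{+}^{p_{2}-p_{1}},
$$
multiply both sides by $Q^{p_2}\gamma_+^{-p_2}$, and rewrite the right-hand side using $\gamma_+^{-p_1} = 1-\nu p_1$ (which is the first identity, rearranged). This produces
$$
1-\tfrac{p_{2}}{p_{1}} = Q^{p_{2}}\gamma_{+}^{-p_{2}} - \tfrac{p_{2}}{p_{1}}(1-\nu p_{1}),
$$
whence, after canceling the $\frac{p_2}{p_1}$ terms, $Q^{p_2}\gamma_+^{-p_2} = 1-\nu p_2$. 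Inverting gives the second identity.

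For the consequence, I would simply recall that in the present setup $x\in\Gamma_Q\cap\Omega_{\textup{IV}}$ has coordinates $x_1 = \gamma_+^{p_1}v^{p_1}$ and $x_2 = Q^{-p_2}\gamma_+^{p_2}v^{p_2}$. Substituting the two identities just proved yields $x_k = v^{p_k}/(1-\nu p_k)$ for $k=1,2$.

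There is really no obstacle here; the statement is essentially bookkeeping, repackaging the identity \eqref{vazhnoeravenstvo} already established and combining it with the defining equation for $\nu$. The only place one has to be slightly careful is in the algebraic manipulation that eliminates the factor $\gamma_+^{-p_1}$, which is exactly why the first identity (the definition of $\nu$) must be invoked in deriving the second.
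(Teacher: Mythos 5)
Your proof is correct and matches the paper exactly: the paper itself does not supply a separate argument for this lemma, but instead records equation \eqref{vazhnoeravenstvo} (derived inside the proof of Lemma \ref{koordinatiw} by multiplying \eqref{gamma} through by $Q^{p_2}\gamma_+^{-p_2}$ and invoking the definition of $\nu$) as a lemma for later reference, which is precisely the computation you reproduce. The final substitution $x_1=\gamma_+^{p_1}v^{p_1}$, $x_2=Q^{-p_2}\gamma_+^{p_2}v^{p_2}$ is likewise the intended bookkeeping step.
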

\begin{proof}[Proof of the Lemma \ref{ravenstvoBx}]
Since $w$ is a decreasing function and $v_{-}<1$, we get
$$
|\{w\geqslant 1\}|=\frac{\gamma_{-}^{p_{1}}-v_{-}^{p_{1}}}{1-v_{-}^{p_{1}}}\cdot a.
$$
Let us count $B(x)$. It will be a direct calculation. Precisely, substituting $x_{k}=\frac{v^{p_{k}}}{1-\nu p_{k}}$ into \eqref{answerforb}, we get
\begin{equation}
\begin{split}
B(x)&=\frac{1}{1-A}\frac{v_{-}^{-\frac{(p_{1}-p_{2})A}{1-A}}}{1-v_{-}^{p_{1}}}v^{\frac{p_{1}-p_{2}}{1-A}} \left(\frac{p_{1}-p_{2}}{p_{2}}v^{p_{2}}+\frac{1}{1-\nu p_{1}}v^{p_{2}}-\frac{p_{1}}{p_{2}}\frac{1}{1-\nu p_{2}}v^{p_{2}}\right)=\\
&=\frac{1}{1-A}\frac{v_{-}^{-\frac{(p_{1}-p_{2})A}{1-A}}}{1-v_{-}^{p_{1}}}v^{\frac{p_{1}-p_{2}A}{1-A}}\left(\frac{p_{1}}{p_{2}}-1+\frac{1}{1-\nu p_{1}}-\frac{p_{1}}{p_{2}}\frac{1}{1-\nu p_{2}}\right)=\\&=
\frac{1}{1-A}\frac{v_{-}^{-\frac{(p_{1}-p_{2})A}{1-A}}}{1-v_{-}^{p_{1}}}v^{\frac{p_{1}-p_{2}A}{1-A}}\left(\frac{\nu p_{1}}{1-\nu p_{1}}-\frac{p_{1}}{p_{2}}\frac{\nu p_{2}}{1-\nu p_{2}}\right)=\\&=
\frac{1}{1-A}\frac{v_{-}^{-\frac{(p_{1}-p_{2})A}{1-A}}}{1-v_{-}^{p_{1}}}v^{\frac{p_{1}-p_{2}A}{1-A}}\left(\frac{\nu p_{1}}{1-\nu p_{1}}-\frac{\nu p_{1}}{1-\nu p_{2}}\right)=\\&=\frac{1}{1-A}\frac{v_{-}^{-\frac{(p_{1}-p_{2})A}{1-A}}}{1-v_{-}^{p_{1}}}v^{\frac{p_{1}-p_{2}A}{1-A}}\nu p_{1} \frac{\nu p_{1}-\nu p_{2}}{(1-\nu p_{1})(1-\nu p_{2})}.
\end{split}
\end{equation}
Recall that $A=Q^{-p_{2}}\gamma_{+}^{p_{2}-p_{1}}$, so
$$
1-A=1-\gamma_{+}^{-p_{1}}\cdot Q^{-p_{2}}\gamma_{+}^{p_{2}}=1-\frac{1-\nu p_{1}}{1-\nu p_{2}}=\frac{\nu p_{1}-\nu p_{2}}{1-\nu p_{2}}.
$$
Therefore,
$$
B(x)=\frac{v_{-}^{-\frac{(p_{1}-p_{2})A}{1-A}}}{1-v_{-}^{p_{1}}}v^{\frac{p_{1}-p_{2}A}{1-A}}\nu p_{1} \frac{1}{(1-\nu p_{1})}.
$$
Moreover, observe that
$$
v_{-}^{-\frac{(p_{1}-p_{2})A}{1-A}}=v_{-}^{-\frac{p_{1}-p_{2}A}{1-A}}\cdot v_{-}^{p_{1}},
$$
so
$$
B(x)=\frac{a^{\nu\cdot \frac{p_{1}-p_{2}A}{1-A}}}{1-v_{-}^{p_{1}}} v_{-}^{p_{1}}\frac{\nu p_{1}}{1-\nu p_{1}}=\frac{a^{\nu\cdot \frac{p_{1}-p_{2}A}{1-A}}}{1-v_{-}^{p_{1}}} \frac{\gamma_{-}^{p_{1}}}{\gamma_{+}^{p_{1}}}\cdot \nu p_{1}\cdot \gamma_{+}^{p_{1}}=\frac{\nu p_{1} \gamma_{-}^{p_{1}}}{1-v_{-}^{p_{1}}} \cdot a^{\nu \frac{p_{1}-p_{2}A}{1-A}}.
$$
One more time recall that $A=Q^{-p_{2}}\gamma_{+}^{p_{2}-p_{1}}=\frac{1-\nu p_{1}}{1-\nu p_{2}}$, so
$$
p_{1}-p_{2}A=p_{1}-p_{2}\frac{1-\nu p_{1}}{1-\nu p_{2}}=\frac{p_{1}-p_{2}}{1-\nu p_{2}},
$$
thus
$$
\frac{p_{1}-p_{2}A}{1-A}=\frac{1}{\nu}.
$$
Using that, we get
$$
B(x)=\frac{\nu p_{1}\cdot \gamma_{-}^{p_{1}}}{1-v_{-}^{p_{1}}}a=\frac{\gamma_{-}^{p_{1}}+(\nu p_{1}-1)\gamma_{-}^{p_{1}}}{1-v_{-}^{p_{1}}}a=\frac{\gamma_{-}^{p_{1}}-\gamma_{+}^{-p_{1}}\gamma_{-}^{p_{1}}}{1-v_{-}^{p_{1}}}a=\frac{\gamma_{-}^{p_{1}}-v_{-}^{p_{1}}}{1-v_{-}^{p_{1}}}a,
$$
and that is exactly what we want to get.
\end{proof}

\subsubsection{The case of arbitrary $x\in \Om_{\textup{IV}}$}
We now take an $x\in \Om_{\textup{IV}}$ and a point $(v^{p_{1}}, v^{p_{2}})\in \Gamma$ such that $x\in \ell_{+}(v)$. We take a point $y=(\gamma_{+}^{p_{1}}v^{p_{1}}, Q^{-p_{2}}\gamma_{+}^{p_{2}}v^{p_{2}})$. Assume that $w_{y}$ is a function that we have built in the previous subsection. Note that there is a number $\lambda\in[0,1]$ such that
\begin{align*}
x_{1}&=(1-\lambda) v^{p_{1}}+\lambda v^{p_{1}}\gamma_{+}^{p_{1}},\\
x_{2}&=(1-\lambda) v^{p_{2}}+\lambda Q^{-p_{2}}v^{p_{2}}\gamma_{+}^{p_{2}}.
\end{align*}
Denote now
$$
w(t)=\begin{cases} w_{y}\left(\frac{t}{\lambda}\right), &t\in [0, \lambda], \\
                  v, &t\in (\lambda, 1]. \end{cases}
$$
Take a function $w_y^\lambda (t)=w_y\left(\frac{t}{\lambda}\right)$. It is defined when $t\leqslant \lambda$, but when $t$ is close to $\lambda$, it is a power function, so we can extend it to the interval $[0,1]$. So we assume now that our $w_y\left(\frac{t}{\lambda}\right)$ is defined for $t\in [0,1]$.
We note that
$$
w(t)=\begin{cases} w_{y}\left(\frac{t}{\lambda}\right), &w_{y}\left(\frac{t}{\la}\right)\geqslant v, \\
                  v, &w_{y}\left(\frac{t}{\la}\right)\leqslant v. \end{cases}
$$
Therefore, by the Lemma \ref{cutoff}, $w\in A_{p_{1}, p_{2}}^{Q}$.

Moreover, since $B$ is linear on $\ell_{+}(v)$ and since $v<1$, we get
$$
B(x)=(1-\lambda)B(v^{p_{1}}, v^{p_{2}})+\lambda B(y)=\lambda B(y)=\lambda |\{w_{y}(t)\geqslant 1\}|=|\{w(t)\geqslant 1\}|,
$$
which completely finishes our proof.

\section{Calculating the $A_{p_{1}, p_{2}}$-``norm'' of the test-function}\label{calcofthenorm}
In this section we estimate a ``norm'' of some particular functions. We remind some notation.
We fix a point $x=(x_{1}, x_{2})=(\gamma_{+}^{p_{1}}v^{p_{1}}, Q^{-p_{2}}\gamma_{+}^{p_{2}}v^{p_{2}})\in \Gamma_{Q}$.
First we take a number $\nu$ such that
$$
\frac{1}{1-\nu p_{1}}=\gamma_{+}^{p_{1}}.
$$
We proved that then the following holds:
$$
\frac{1}{1-\nu p_{2}}=Q^{-p_{2}}\gamma_{+}^{p_{2}}.
$$
Consequently,
$$
x_{k}=\frac{v^{p_{k}}}{1-\nu p_{k}}.
$$
We remind that
$$
v_{-}=\frac{\gamma_{-}}{\gamma_{+}}.
$$
and take
$$
a=\left(\frac{v}{v_{-}}\right)^{\frac{1}{\nu}}.
$$
Now we denote
$$
w(t)=\begin{cases}1, &t\in [0, \frac{\gamma_{-}^{p_{1}}-v_{-}^{p_{1}}}{1-v_{-}^{p_{1}}}\cdot a], \\
                  v_{-}, &t\in [\frac{\gamma_{-}^{p_{1}}-v_{-}^{p_{1}}}{1-v_{-}^{p_{1}}}\cdot a, a],\\
                  v_{-}\cdot \left(\frac{a}{t}\right)^{\nu}, &t\in[a,1].
\end{cases}
$$

We introduce a simpler function
$$
u(t)=v_{-}\cdot \left(\frac{a}{t}\right)^{\nu}, \; t\in [0,1].
$$
Our first lemma is the following.
\begin{lemma}\label{shirlimirli}
$$
u\in A_{p_{1}, p_{2}}^{Q}.
$$
\end{lemma}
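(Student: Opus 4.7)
The plan is to verify $\langle u^{p_1}\rangle_J^{1/p_1}\leqslant Q\langle u^{p_2}\rangle_J^{1/p_2}$ for every subinterval $J=[\alpha,\beta]\subset[0,1]$. Since $u$ is a pure power function, the averages are explicit:
\[
\langle u^{p_k}\rangle_J = v_-^{p_k}\,a^{\nu p_k}\cdot\frac{\beta^{1-\nu p_k}-\alpha^{1-\nu p_k}}{(1-\nu p_k)(\beta-\alpha)}.
\]

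The first step is a scaling reduction: the substitution $t\mapsto \beta t$ shows that the reverse-H\"older ratio $\langle u^{p_1}\rangle_J^{1/p_1}/\langle u^{p_2}\rangle_J^{1/p_2}$ depends only on $r:=\alpha/\beta\in[0,1]$, since every factor involving $v_-$, $a^\nu$, and $\beta^{-\nu}$ cancels between numerator and denominator. Hence it is enough to check the inequality on intervals of the form $[r,1]$. Next, using the defining identity $\gamma_+^{p_1}=1/(1-\nu p_1)$ together with its companion \eqref{vazhnoeravenstvo}, a direct evaluation at $r=0$ yields
\[
\frac{\langle u^{p_1}\rangle_{[0,1]}^{1/p_1}}{\langle u^{p_2}\rangle_{[0,1]}^{1/p_2}}=\frac{v_-\,a^\nu\,\gamma_+}{v_-\,a^\nu\cdot Q^{-1}\gamma_+}=Q,
\]
so the bound is attained with equality at the full interval; at $r=1$ the ratio degenerates to $1$. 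It therefore remains to show that $F(r):=\langle u^{p_1}\rangle_{[r,1]}^{1/p_1}/\langle u^{p_2}\rangle_{[r,1]}^{1/p_2}$ is nonincreasing on $[0,1]$, so that $\max F = F(0)=Q$.

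The main obstacle is this monotonicity. I would attack it by logarithmic differentiation using the identity $\frac{d}{dr}\langle f\rangle_{[r,1]}=(\langle f\rangle_{[r,1]}-f(r))/(1-r)$, which gives
\[
(1-r)\frac{F'(r)}{F(r)}=\frac{1}{p_1}\left(1-\frac{u(r)^{p_1}}{\langle u^{p_1}\rangle_{[r,1]}}\right)-\frac{1}{p_2}\left(1-\frac{u(r)^{p_2}}{\langle u^{p_2}\rangle_{[r,1]}}\right),
\]
and then do case analysis on the signs of $p_1,p_2$ (mirroring the three cases already used to split $\Omega$). Since $u$ is strictly decreasing, $u(r)$ is the maximum of $u$ on $[r,1]$, which fixes the sign of each $1-u(r)^{p_k}/\langle u^{p_k}\rangle_{[r,1]}$; combined with the signs of the factors $1/p_k$, the expression should come out nonpositive in each case.

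A cleaner geometric alternative, which I suspect is the one hiding in the background, is to view the curve $r\mapsto P(r):=(\langle u^{p_1}\rangle_{[r,1]},\langle u^{p_2}\rangle_{[r,1]})$ as a path in the $(x_1,x_2)$-plane joining a point of $\Gamma_Q$ at $r=0$ to $(1,1)\in\Gamma_1$ at $r=1$. By the very choice of $\nu$ the power function $u$ is extremal at the endpoint on $\Gamma_Q$, which forces $P'(0)$ to be parallel to the tangent of $\Gamma_Q$ there; a direct slope computation then shows that $P(r)$ bends off into the interior of $\Omega$ and cannot cross $\Gamma_Q$ again, since it would have to re-tangent. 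This keeps $P(r)\in\Omega$ for all $r\in[0,1]$, which is exactly the $A_{p_1,p_2}^Q$ condition for $u$ on $[r,1]$.
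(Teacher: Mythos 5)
Your setup coincides with the paper's: you compute the averages of the power function explicitly, reduce by scaling to the ratio $s=\alpha/\beta$, and observe that the reverse H\"older ratio equals $Q$ at $s=0$. But the paper then proves $g(s)\leqslant 1$ not by monotonicity in $s$: it freezes $s$ and differentiates in the exponent $\nu$, obtaining $\partial g/\partial\nu=(\text{positive})\cdot\log(s)\cdot\bigl(1-s^{\nu(p_1-p_2)}\bigr)\leqslant 0$, whence $g\leqslant g|_{\nu=0}=1$. There every factor has a definite sign and the argument closes at once.

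Your proposed monotonicity-in-$r$ argument has a genuine gap exactly where you wave at ``case analysis on the signs of $p_1,p_2$.'' In your logarithmic derivative the two terms always have \emph{opposite} signs, whatever the signs of $p_1$ and $p_2$: since $u$ is decreasing, $u(r)^{p_k}$ is the maximum of $u^{p_k}$ on $[r,1]$ when $p_k>0$ and the minimum when $p_k<0$, so in either case $\frac{1}{p_k}\bigl(1-u(r)^{p_k}/\langle u^{p_k}\rangle_{[r,1]}\bigr)\leqslant 0$; hence your first term is $\leqslant 0$ while the second (which carries an overall minus sign) is $\geqslant 0$. No combination of sign cases determines the sign of the sum; one needs a genuine quantitative comparison of the two terms (already for $p_1=1$, $p_2=-1$ the required cancellation is nontrivial, though the monotonicity does hold there). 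The geometric alternative is not a proof either: tangency of the curve $P(r)$ to $\Gamma_Q$ at $r=0$ does not by itself prevent $P$ from crossing $\Gamma_Q$ at a later parameter, and ``it would have to re-tangent'' is not an obstruction without a convexity argument. So the statement you reduce to (monotonicity of $F$ in $r$) is plausible and in fact stronger than what the paper proves, but neither of your routes establishes it; the paper's differentiation in $\nu$ is the missing step.
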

This lemma was proved in ~\cite{Va}, but we repeat the proof.
\begin{proof}[Proof]
We take an interval $J=[\alpha, \beta]$ and write
\begin{multline}
\av{u^{p_{1}}}{J}^{\frac{1}{p_{1}}}\av{u^{p_{2}}}{J}^{-\frac{1}{p_{2}}}=\left(x_{1}\cdot\frac{\beta^{1-\nu p_{1}}-\alpha^{1-\nu p_{1}}}{\beta-\alpha}\right)^{\frac{1}{p_{1}}}\left(x_{2}\cdot\frac{\beta^{1-\nu p_{2}}-\alpha^{1-\nu p_{2}}}{\beta-\alpha}\right)^{-\frac{1}{p_{2}}}=\\=Q\cdot \left(\frac{\beta^{1-\nu p_{1}}-\alpha^{1-\nu p_{1}}}{\beta-\alpha}\right)^{\frac{1}{p_{1}}}\cdot\left(\frac{\beta^{1-\nu p_{2}}-\alpha^{1-\nu p_{2}}}{\beta-\alpha}\right)^{-\frac{1}{p_{2}}}.
\end{multline}
To prove that the left-hand side is not greater then $Q$ we now have to prove that for every $\alpha$ and $\beta$, such that $0\leqslant\alpha\leqslant\beta\leqslant1$ the following estimate is true:
$$
\left(\frac{\beta^{1-\nu p_{1}}-\alpha^{1-\nu p_{1}}}{\beta-\alpha}\right)^{\frac{1}{p_{1}}}\cdot\left(\frac{\beta^{1-\nu p_{2}}-\alpha^{1-\nu p_{2}}}{\beta-\alpha}\right)^{-\frac{1}{p_{2}}}\leqslant 1.
$$
Denote $s=\frac{\alpha}{\beta}$. Then the left-hand side of the last expression is equal to
$$
g(s)=\left(\frac{1-s^{1-\nu p_{1}}}{1-s}\right)^{\frac{1}{p_{1}}}\cdot\left(\frac{1-s^{1-\nu p_{2}}}{1-s}\right)^{-\frac{1}{p_{2}}},
$$
where $0\leqslant s\leqslant 1$.

Now we shall stop treating $g$ as a function of $s$, but we consider is as a function of $\nu>0$! Then
$$
\frac{\partial g}{\partial \nu}=\mbox{something positive}\cdot \log(s)\cdot (1-s^{\nu p_1-\nu p_2})\leqslant 0,
$$
and therefore
$$
g(s)\leqslant g(\nu=0)=1,
$$
and we are done.
\end{proof}
Then, as in the Subsection \ref{subsectionapproxim}, we have
$$
\underline{u}_{v_{-}}(t)=\begin{cases}
                  v_{-}, &t\in [0, a],\\
                  v_{-}\cdot \left(\frac{a}{t}\right)^{\nu}, &t\in[a,1].
\end{cases}
$$
From the Lemma \ref{cutoff} we know that $\underline{u}_{v_{-}}\in A_{p_{1}, p_{2}}^{Q}$.
Recall that to prove that the initial function $w$ is also in $A_{p_{1}, p_{2}}^{Q}$, we should prove that for every interval $J\subset[0,1]$
$$
\av{w^{p_{1}}}{J}^{\frac{1}{p_{1}}}\av{w^{p_{2}}}{J}^{-\frac{1}{p_{2}}}\leqslant Q.
$$
But from Lemmata \ref{shirlimirli} and \ref{kusochnaya} we already know it for many intervals $J$. Consequently, we should check the last inequality for intervals $J=[\alpha, \beta]$ such that $\alpha <\frac{\gamma_{-}^{p_{1}}-v_{-}^{p_{1}}}{1-v_{-}^{p_{1}}}\cdot a$, $\beta>a$. It will be our last step.
\begin{lemma}\label{shirmanirli}
If $J=[\alpha, \beta]$, and $\alpha <\frac{\gamma_{-}^{p_{1}}-v_{-}^{p_{1}}}{1-v_{-}^{p_{1}}}\cdot a$, $\beta>a$, then
$$
\av{w^{p_{1}}}{J}^{\frac{1}{p_{1}}}\av{w^{p_{2}}}{J}^{-\frac{1}{p_{2}}}\leqslant Q.
$$
\end{lemma}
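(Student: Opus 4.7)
The plan is to compute the averages $\av{w^{p_k}}{J}$ in closed form and then verify the required inequality by a monotonicity argument in the endpoint $\alpha$.

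\textbf{Step 1 (Explicit formulas).} I split $J=[\alpha,\beta]$ into $[\alpha,ca]$, $[ca,a]$, $[a,\beta]$, on which $w$ equals $1$, $v_-$, and $v_-(a/t)^\nu$ respectively. The integral over $[a,\beta]$ is elementary once I use $\gamma_+^{p_1}=\frac{1}{1-\nu p_1}$, $Q^{-p_2}\gamma_+^{p_2}=\frac{1}{1-\nu p_2}$, and $v_-\gamma_+=\gamma_-$. The contributions from the first two pieces telescope against the boundary term of the third via the co-linearity identity $c=\frac{\gamma_-^{p_1}-v_-^{p_1}}{1-v_-^{p_1}}=\frac{Q^{-p_2}\gamma_-^{p_2}-v_-^{p_2}}{1-v_-^{p_2}}$, which holds because $(1,1)$, $(v_-^{p_1},v_-^{p_2})$ and $(\gamma_-^{p_1},Q^{-p_2}\gamma_-^{p_2})$ all lie on $\ell_-$. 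I expect this to give the clean formulas
\[
(\beta-\alpha)\av{w^{p_1}}{J}=\gamma_-^{p_1}a^{\nu p_1}\beta^{1-\nu p_1}-\alpha,\quad
(\beta-\alpha)\av{w^{p_2}}{J}=Q^{-p_2}\gamma_-^{p_2}a^{\nu p_2}\beta^{1-\nu p_2}-\alpha.
\]

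\textbf{Step 2 (Setup for monotonicity).} Normalizing with $\eta=a/\beta$, $\tilde\alpha=\alpha/\beta$, $N_1=\gamma_-^{p_1}\eta^{\nu p_1}$, $N_2=Q^{-p_2}\gamma_-^{p_2}\eta^{\nu p_2}$, these become $\av{w^{p_k}}{J}=(N_k-\tilde\alpha)/(1-\tilde\alpha)$. The identity $N_1^{1/p_1}/N_2^{1/p_2}=\gamma_-\eta^\nu/(Q^{-1}\gamma_-\eta^\nu)=Q$ shows that $(N_1,N_2)\in\Gamma_Q$ for every $\eta$, so $\chi(\tilde\alpha):=\av{w^{p_1}}{J}^{1/p_1}\av{w^{p_2}}{J}^{-1/p_2}$ satisfies $\chi(0)=Q$; the constraint $\alpha<ca$ becomes $\tilde\alpha<c\eta$. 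Introducing the parameter $s:=\tilde\alpha/(1-\tilde\alpha)$, so that $\av{w^{p_k}}{J}=N_k+s(N_k-1)$, a short computation of $\frac{d\log\chi}{ds}$ shows $\sig(\chi'(s))=\sig(K(s)/(p_1p_2))$ where
\[
K(s):=p_2(N_1-1)\,\av{w^{p_2}}{J}-p_1(N_2-1)\,\av{w^{p_1}}{J},
\]
and a direct expansion in $s$ yields the key fact that $K(s)=K(0)+s(p_2-p_1)(N_1-1)(N_2-1)$ is \emph{affine} in $s$. Hence it suffices to show $K(s)/(p_1p_2)\le0$ at both endpoints (or even just that $K(0)/(p_1p_2)\le 0$ and the slope has the same sign).

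\textbf{Step 3 (Sign verification and main obstacle).} The main technical work is the sign check. At $s=0$ one has $K(0)/(p_1p_2)$ proportional to $(1-\gamma_-^{-p_1}\eta^{-\nu p_1})/p_1-(1-Q^{p_2}\gamma_-^{-p_2}\eta^{-\nu p_2})/p_2$; this vanishes at $\eta=1$ as a direct consequence of the $\gamma$-defining equation \eqref{gamma}, which after clearing denominators yields the identity $(1-\gamma_-^{-p_1})/p_1=(1-Q^{p_2}\gamma_-^{-p_2})/p_2$. For $\eta\in(0,1)$, the $\eta$-derivative of this expression is proportional to $1/N_1-1/N_2$, whose sign is constant on $(0,1]$ (since there is no critical point of the relevant power-difference function in this range), giving the correct sign of $K(0)/(p_1p_2)$. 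The slope $(p_2-p_1)(N_1-1)(N_2-1)/(p_1p_2)$ is then verified in each of the three cases $p_1>p_2>0$, $p_1>0>p_2$, $0>p_1>p_2$; using $\sig(N_k-1)=-\sig(p_k)$ (since $\gamma_-,\eta<1$ and $\nu>0$) one checks in each case that it has the same sign as $K(0)/(p_1p_2)$, so the affine $K(s)/(p_1p_2)$ never changes sign on $[0,\infty)$. The case analysis here — simultaneous signs of $K(0)/(p_1p_2)$, of the slope, and of $p_1p_2$ — is the delicate and error-prone step, but once completed it yields $\chi'\le0$, and therefore $\chi(\tilde\alpha)\le \chi(0)=Q$ throughout $[0,c\eta]$, which is the desired inequality.
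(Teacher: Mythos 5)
Your plan is correct and takes a genuinely different, and in fact more economical, route than the paper. The paper works with the two\--variable function $F(\alpha,\beta)$, rules out interior maxima of the transformed function $G(s,t)$ by a critical\--point analysis, and then must examine four boundary pieces separately ($\alpha=0$, $\alpha=\frac{\gamma_-^{p_1}-v_-^{p_1}}{1-v_-^{p_1}}a$, $\beta=a$, $\beta=1$), the last of which needs its own monotonicity lemma $\sig(H'_{x_1})=\sig(p_1)$. You instead freeze $\beta$ and prove monotonicity in $\alpha$ alone, which collapses everything onto the single case $\alpha=0$, where the average over $[0,\beta]$ lands exactly on $\Gamma_Q$ and the ratio equals $Q$. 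Your closed forms for the averages agree with the paper's (indeed $\gamma_-^{p_1}a^{\nu p_1}=x_1$ and $Q^{-p_2}\gamma_-^{p_2}a^{\nu p_2}=x_2$), the identity $(N_1,N_2)\in\Gamma_Q$, the affine structure of $K(s)$, the vanishing of $K(0)$ at $\eta=1$ via \eqref{gamma}, and the signs $\sig(N_k-1)=-\sig(p_k)$ all check out; the slope of $K(s)/(p_1p_2)$ is in fact negative uniformly in all three cases, since $\sig\bigl((N_1-1)(N_2-1)\bigr)=\sig(p_1p_2)$ makes it equal to $\sig(p_2-p_1)=-1$, so the case analysis you fear is not actually needed there.

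The one real caveat is the parenthetical in Step 3: that $1/N_1-1/N_2$ has constant sign on $(0,1]$ ``since there is no critical point in this range'' is an assertion, not a proof, and it is exactly where the only nontrivial inequality of the lemma lives. Writing $r=\gamma_-\eta^\nu$, one has $N_2-N_1=r^{p_1}\bigl(Q^{-p_2}r^{p_2-p_1}-1\bigr)$, which vanishes at the single point $r=Q^{p_2/(p_2-p_1)}$; what you need is that this point satisfies $r\geqslant\gamma_-$, i.e.\ $\gamma_-\leqslant Q^{p_2/(p_2-p_1)}$, together with the determination that the sign on $(0,\gamma_-]$ is the one giving $\psi'\geqslant0$ (for instance by letting $\eta\to0$, where $N_2/N_1\to\infty$). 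That inequality is precisely what the paper proves in its $\beta=1$ boundary case by the chain of equivalences ending in $p_1\cdot\frac{Q^{p_2}-\gamma_-^{p_2}}{p_2}\geqslant0$, so you should import that computation rather than rely on the parenthetical. With that supplied, your argument closes and is complete.
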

\begin{proof}[Proof]
Obviously,
\begin{multline}
\av{w^{p_{1}}}{J}=\\=\frac{1}{\beta-\alpha}\cdot \left[\Big(\frac{\gamma_{-}^{p_{1}}-v_{-}^{p_{1}}}{1-v_{-}^{p_{1}}}\cdot a - \alpha\Big) + v_{-}^{p_{1}}\cdot a\cdot\frac{1-\gamma_{-}^{p_{1}}}{1-v_{-}^{p_{1}}} + \frac{v^{p_{1}}}{1-\nu p_{1}} \Big(\beta^{1-\nu p_{1}}-a^{1-\nu p_{1}}\Big) \right].
\end{multline}
Note that
$$
x_{1}=\ave{w^{p_{1}}}=\frac{\gamma_{-}^{p_{1}}-v_{-}^{p_{1}}}{1-v_{-}^{p_{1}}}\cdot a+v_{-}^{p_{1}}\cdot a\cdot\frac{1-\gamma_{-}^{p_{1}}}{1-v_{-}^{p_{1}}} + \frac{v^{p_{1}}}{1-\nu p_{1}} \Big(1-a^{1-\nu p_{1}}\Big),
$$
and
$$
x_{1}=\frac{v^{p_{1}}}{1-\nu p_{1}}.
$$
Therefore,
$$
\av{w^{p_{1}}}{J}=\frac{x_{1}\beta^{1-\nu p_{1}}-\alpha}{\beta-\alpha},
$$
and, similarly,
$$
\av{w^{p_{2}}}{J}=\frac{x_{2}\beta^{1-\nu p_{2}}-\alpha}{\beta-\alpha}.
$$
Therefore,
$$
\av{w^{p_{1}}}{J}^{\frac{1}{p_{1}}}\av{w^{p_{2}}}{J}^{-\frac{1}{p_{2}}}=\left(\frac{x_{1}\beta^{1-\nu p_{1}}-\alpha}{\beta-\alpha}\right)^{\frac{1}{p_{1}}}\cdot\left(\frac{x_{2}\beta^{1-\nu p_{2}}-\alpha}{\beta-\alpha}\right)^{-\frac{1}{p_{2}}}.
$$
Let
$$
F(\alpha, \beta)=\left(\frac{x_{1}\beta^{1-\nu p_{1}}-\alpha}{\beta-\alpha}\right)^{\frac{1}{p_{1}}}\cdot\left(\frac{x_{2}\beta^{1-\nu p_{2}}-\alpha}{\beta-\alpha}\right)^{-\frac{1}{p_{2}}}
$$
be the right-hand side of the last expression. First, we introduce new variables
$$
t=\frac{x_{1}}{\beta^{\nu p_{1}}},
$$
$$
s=\frac{\alpha}{\beta}.
$$
Denote
$$
G(s,t)=F(\alpha,\beta)=\left(\frac{t-s}{1-s}\right)^{\frac{1}{p_{1}}} \cdot \left(\frac{Q^{-p_{2}}t^{\frac{p_{2}}{p_{1}}}-s}{1-s}\right)^{-\frac{1}{p_{2}}}.
$$
We prove the following lemma.
\begin{lemma}
$G(s,t)$ does not attain its maximum in the interior of its domain.
\end{lemma}
\begin{zamech}
We have no intention to write the domain of $G$ explicitly. However, its domain has some obvious properties. For example, $0\leqslant s <1$ and
$$
\left(\frac{t-s}{1-s}\right)=\frac{x_{1}\beta^{1-\nu p_{1}}-\alpha}{\beta-\alpha}=\av{w^{p_{1}}}{J}>0.
$$
must be separated from zero.
\end{zamech}
\begin{proof}[Proof of the Lemma]
$G$ is a smooth function, so if it has a maximum in the interior of its domain, then at this point both $G^{\prime}_{t}$ and $G^{\prime}_{s}$ are equal to zero.
Denote
$$
M=\frac{t-s}{1-s},
$$
$$
N=\frac{Q^{-p_{2}}t^{\frac{p_{2}}{p_{1}}}-s}{1-s}.
$$
Then
$$
M^{\prime}_{t}=\frac{1}{1-s},
$$
$$
N^{\prime}_{t}=\frac{p_{2}}{p_{1}}Q^{-p_{2}}t^{\frac{p_{2}}{p_{1}}-1}\frac{1}{1-s}.
$$
Therefore, $G^{\prime}_{t}=0$ if and only if
$$
\frac{N}{p_{1}}-\frac{M}{p_{1}}Q^{-p_{2}}t^{\frac{p_{2}}{p_{1}}-1}=0,
$$
which yields
$$
\left(Q^{-p_{2}}t^{\frac{p_{2}}{p_{1}}}-s\right)-(t-s)Q^{-p_{2}}t^{\frac{p_{2}}{p_{1}}-1}=0.
$$
and, therefore,
$$
s(Q^{-p_{2}}t^{\frac{p_{2}}{p_{1}}-1}-1)=0.
$$
Since in the interior of the domain $s>0$, we get $Q^{-p_{2}}t^{\frac{p_{2}}{p_{1}}-1}=1$.
Note that then $t\not=1$.

Now let us count the partial derivative with respect to $s$, assuming that the last equality holds.
Obviously,
$$
M^{\prime}_{s}=\frac{t-1}{(s-1)^{2}},
$$
$$
N^{\prime}_{s}=\frac{Q^{-p_{2}}t^{\frac{p_{2}}{p_{1}}}-1}{(s-1)^{2}}=M^{\prime}_{s}.
$$
Observe that $t\not=1$, so
$$
G^{\prime}_{s}=\frac{1}{p_{1}}M^{\frac{1}{p_{1}}-1}N^{-\frac{1}{p_{2}}} M^{\prime}_{s} - \frac{1}{p_{2}}M^{\frac{1}{p_{1}}}N^{-\frac{1}{p_{2}}-1}N^{\prime}_{s}.
$$
If $G^{\prime}_{s}=0$, then
$$
\frac{N}{p_{1}}-\frac{M}{p_{2}}=0,
$$
$$
\frac{1}{p_{1}} \frac{Q^{-p_{2}}t^{\frac{p_{2}}{p_{1}}}-s}{1-s} - \frac{1}{p_{2}}\frac{t-s}{1-s}=0.
$$
and since we consider a concrete $t$,
$$
t-s=0,
$$
$$
t=s.
$$
But it contradicts with the second property of our domain from the remark, which finishes the proof.
\end{proof}
Note that our change of variables is obviously an open map. Therefore, the interior of the domain of $F$ maps onto the interior of the domain of $G$ and thus $F$ does not attain its maximum in the interior of its domain.

Let us study $F$ on the boundary of its domain.
\paragraph{Case $\alpha=0$}
Here everything is pretty obvious, because, independently of $\beta$,
$$
F(0, \beta)=Q.
$$

\paragraph{Case $\beta=a$}
Here everything is also easy, since then the third piece of $w$ is not involved and, therefore, we have a linear combination of $(1,1)$ and $(v_{-}^{p_{1}}, v_{-}^{p_{2}})$.

\paragraph{Case $\alpha=\frac{\gamma_{-}^{p_{1}}-v_{-}^{p_{1}}}{1-v_{-}^{p_{1}}}a$}
This case is already done, since here the first piece of $w$ is not involved, and we get a cut-off of a function $u$ from the Lemma \ref{shirlimirli}.

\paragraph{Case $\beta=1$}

This case is more complicated and needs to be studied. Here we have
$$
\av{w^{p_{1}}}{J}^{\frac{1}{p_{1}}}\av{w^{p_{2}}}{J}^{-\frac{1}{p_{2}}}=\left(\frac{x_{1}-\alpha}{1-\alpha}\right)^{\frac{1}{p_{1}}}\cdot\left(\frac{x_{2}-\alpha}{1-\alpha}\right)^{-\frac{1}{p_{2}}},
$$
where
$$
0\leqslant \alpha < \frac{\gamma_{-}^{p_{1}}-v_{-}^{p_{1}}}{1-v_{-}^{p_{1}}}a.
$$
We denote
$$
H(x_{1}, \alpha)=\left(\frac{x_{1}-\alpha}{1-\alpha}\right)^{\frac{1}{p_{1}}}\cdot\left(\frac{x_{2}-\alpha}{1-\alpha}\right)^{-\frac{1}{p_{2}}}.
$$
Recall that $x_{2}=Q^{-p_{2}}x_{1}^{\frac{p_{2}}{p_{1}}}$. We prove the following.
\begin{lemma}
The following is true:
$$
\sig(H^{\prime}_{x_{1}})=\sig(p_{1}).
$$
\end{lemma}
Suppose we have proved the lemma. First we show, how to finish the proof of the theorem.
We note that $x_{1}\leqslant \gamma_{-}^{p_{1}}$ if $p_{1}>0$ and $x_{1}\geqslant \gamma_{-}^{p_{1}}$ if $p_{1}<0$. Therefore,
$$
H(x_{1}, \alpha)\leqslant H(\gamma_{-}^{p_{1}}, \alpha).
$$
We would like to estimate the right-hand side. Assume, therefore, that $x_{1}=\gamma_{-}^{p_{1}}$, $x_{2}=Q^{-p_{2}}\gamma_{-}^{p_{2}}$.
We introduce a function
$$
q(t)=\begin{cases}1, &t\in [0, \frac{\gamma_{-}^{p_{1}}-v_{-}^{p_{1}}}{1-v_{-}^{p_{1}}}] \\
                v_{-}, &t\in [\frac{\gamma_{-}^{p_{1}}-v_{-}^{p_{1}}}{1-v_{-}^{p_{1}}}, 1].\end{cases}
$$
Note that $\ave{q^{p_{1}}}=\gamma_{-}^{p_{1}}=x_{1}$, $\ave{q^{p_{2}}}=Q^{-p_{2}}\gamma_{-}^{p_{2}}=x_{2}$ and for every $\alpha\leqslant \frac{\gamma_{-}^{p_{1}}-v_{-}^{p_{1}}}{1-v_{-}^{p_{1}}}$ we have  $\av{q^{p_{k}}}{[\alpha, 1]}=\frac{x_{k}-\alpha}{1-\alpha}$.
Since the whole line interval, connecting $(1,1)$ and $(v_{-}^{p_{1}}, v_{-}^{p_{2}})$ lies in $\Omega$, we have
$$
\left(\frac{x_{1}-\alpha}{1-\alpha}\right)^{\frac{1}{p_{1}}}\left(\frac{x_{2}-\alpha}{1-\alpha}\right)^{-\frac{1}{p_{2}}}\leqslant Q
$$
for every $\alpha\leqslant \frac{\gamma_{-}^{p_{1}}-v_{-}^{p_{1}}}{1-v_{-}^{p_{1}}}$.

But to estimate $H$ we need to consider $\alpha\leqslant \frac{\gamma_{-}^{p_{1}}-v_{-}^{p_{1}}}{1-v_{-}^{p_{1}}}\cdot a$, which is stronger then the previous one, since $a\leqslant 1$. Therefore,
$$
H(\gamma_{-}^{p_{1}}, \alpha)\leqslant Q,
$$
and that is it.
\end{proof}
It remains to prove the last Lemma.
\begin{proof}[Proof]
Recall that we want to calculate $H^{\prime}_{x_{1}}(x_{1}, \alpha)$. Recall that
$$
x_{2}=Q^{-p_{2}}x_{1}^{\frac{p_{2}}{p_{1}}}.
$$
Therefore,
$$
\frac{dx_{2}}{dx_{1}}=\frac{p_{2}}{p_{1}}\frac{x_{2}}{x_{1}}.
$$
Denoting $M=\frac{x_{1}-\alpha}{1-\alpha}$, $N=\frac{x_{2}-\alpha}{1-\alpha}$, we get:
\begin{align*}
\frac{\partial H}{\partial x_{1}}=&\frac{1}{p_{1}}M^{\frac{1}{p_{1}}-1}N^{-\frac{1}{p_{2}}} - \frac{1}{p_{2}}M^{\frac{1}{p_{1}}}N^{-\frac{1}{p_{2}}-1} \frac{p_{2}x_{2}}{p_{1}x_{1}}=\\
=&M^{\frac{1}{p_{1}}-1}N^{-\frac{1}{p_{2}}-1}\left(\frac{1}{p_{1}}\frac{x_{2}-\alpha}{1-\alpha}-\frac{1}{p_{2}}\frac{x_{1}-\alpha}{1-\alpha}\frac{p_{2}}{p_{1}}\frac{x_{2}}{x_{1}}\right)
=\\
=&M^{\frac{1}{p_{1}}-1}N^{-\frac{1}{p_{2}}-1} \frac{\alpha}{1-\alpha}\frac{1}{x_{1}}\frac{x_{2}-x_{1}}{p_{1}}.
\end{align*}
All we need now is to prove that $x_{2}-x_{1}\geqslant0$. One can do it algebraically, but in fact it is a pure geometry. It can be seen from the picture, but we give an independent proof.
Notice that $x_{2}\geqslant x_{1}$ if and only if $Q^{-p_{2}}\geqslant x_{1}^{\frac{p_{1}-p_{2}}{p_{1}}}$, which is true if and only if
$$
p_{1}Q^{\frac{p_{1}p_{2}}{p_{2}-p_{1}}}\geqslant p_{1}x_{1}.
$$

Recall that
$$
Q^{-p_{2}}\left(1-\frac{p_{2}}{p_{1}}\right)\gamma^{p_{2}}=
1-\frac{p_{2}}{p_{1}}Q^{-p_{2}}\gamma^{p_{2}-p_{1}}.
$$
We have the following chain:
\begin{multline}
Q^{\frac{p_{1}p_{2}}{p_{2}-p_{1}}}\geqslant \gamma_{-}^{p_{1}} \Leftrightarrow
Q^{p_{2}}\cdot p_{1}\leqslant \gamma_{-}^{p_{2}-p_{1}} \cdot p_{1}\Leftrightarrow \\
p_{1}Q^{-p_{2}}\gamma_{-}^{p_{2}-p_{1}}\geqslant p_{1} \Leftrightarrow
p_{1}\cdot\frac{p_{1}}{p_{2}}\left(1-Q^{-p_{2}}(1-\frac{p_{2}}{p_{1}})\gamma_{-}^{p_{2}}\right)\geqslant p_{1} \Leftrightarrow \\
p_{1}\cdot\left(\frac{p_{1}}{p_{2}}-1\right) \geqslant \frac{p_{1}}{p_{2}}(p_{1}-p_{2})Q^{-p_{2}}\gamma_{-}^{p_{2}} \Leftrightarrow
\frac{p_{1}}{p_{2}}\geqslant \frac{p_{1}}{p_{2}}Q^{-p_{2}}\gamma_{-}^{p_{2}}\Leftrightarrow p_{1}\cdot \frac{Q^{p_{2}}-\gamma_{-}^{p_{2}}}{p_{2}}\geqslant 0.
\end{multline}
since $Q>\gamma_{-}$ we get that
$$
Q^{\frac{p_{1}p_{2}}{p_{2}-p_{1}}}\geqslant \gamma_{-}^{p_{1}} \Leftrightarrow p_{1}\geqslant 0.
$$
Therefore,
$$
p_{1}Q^{\frac{p_{1}p_{2}}{p_{2}-p_{1}}}\geqslant p_{1}\gamma_{-}^{p_{1}}.
$$
But we know that $p_{1}\gamma_{-}^{p_{1}}\geqslant p_{1}x_{1}$, which finishes our proof.
\end{proof}
The whole proof of the Lemma \ref{shirmanirli} is finished, which finishes the proof of the Lemma \ref{normofw}.
\section{An illustration: the $A_2$ case and Reverse H\"{o}lder property}\label{illustr}
This section has two goals. The first one is to write an answer for the Bellman function in one particular case: $p_1=1$, $p_2=-1$. This case is interesting because it corresponds to the $A_2$ condition, which plays leading role in the theory of singular integral operators. This case is also interesting because here we can write an explicit answer in terms of the $A_2$-``norm'' of the weight, avoiding all implicit equation.

The second goal of this section is to show the following statement.
\begin{theorem}
Suppose $w\in A_2$ and $[w]_2=\sup_I \av{w}{I}\av{w^{-1}}{I}=Q$. Then there exist a constant $\alpha_0>0$, depending only on $Q$, such that for every $\alpha$, $0<\alpha<\alpha_0$, the following inequality holds:
$$
\ave{w^{1+\alpha}}\leqslant C\ave{w}^{1+\alpha},
$$
where $C=C(\alpha)$ is a constant, which does not depend on $w$.
\end{theorem}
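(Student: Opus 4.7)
The approach combines the explicit Bellman function $B$ from \eqref{answerforb} (specialized to $p_1=1$, $p_2=-1$) with a layer-cake decomposition. By the homogeneity $\ave{(w/\ave{w})^{1+\alpha}}=\ave{w^{1+\alpha}}/\ave{w}^{1+\alpha}$, I may rescale and assume $\ave{w}=1$, so that the $A_2$ hypothesis gives $1\leqslant\ave{w^{-1}}\leqslant Q$; the claim then reduces to the uniform estimate $\ave{w^{1+\alpha}}\leqslant C(\alpha,Q)$.

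Writing
$$
\ave{w^{1+\alpha}}=(1+\alpha)\int_0^\infty\lambda^\alpha\,|\{t:w(t)\geqslant\lambda\}|\,d\lambda,
$$
and applying the scaling identity $\mathcal B(x_1,x_2;\lambda)=\mathcal B(x_1/\lambda,\lambda x_2)$ together with the main theorem $\mathcal B=B$, one gets the pointwise control $|\{w\geqslant\lambda\}|\leqslant B(1/\lambda,\lambda\ave{w^{-1}})$. Note that the scaled argument lies on the fixed hyperbola $x_1x_2=\ave{w^{-1}}\in[1,Q]$. On a bounded initial range $\lambda\in(0,\lambda_\ast]$ (with $\lambda_\ast$ depending only on $Q$) the trivial bound $B\leqslant 1$ suffices. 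For $\lambda>\lambda_\ast$ the scaled point lands in $\Omega_{\textup{IV}}$, so formula \eqref{answerforb} applies.

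The decisive computation is inside $\Omega_{\textup{IV}}$. The implicit definition of $v$ specializes to the quadratic $x_2v^2-(1+A)v+Ax_1=0$ with $A=Q/\gamma_+^2$; substituting $x_1=1/\lambda$ and $x_2=\lambda\ave{w^{-1}}$ forces $v=u/\lambda$ for a factor $u$ uniformly bounded in $\ave{w^{-1}}\in[1,Q]$. Plugging into \eqref{answerforb}, the three summands of the bracketed factor $(\tfrac{p_1-p_2}{p_2}v^{p_2}+x_1v^{p_2-p_1}-\tfrac{p_1}{p_2}x_2)$ all scale like $\lambda$, while the prefactor $v^{(p_1-p_2)/(1-A)}=v^{2/(1-A)}$ scales like $\lambda^{-2/(1-A)}$, yielding the tail bound
$$
B(1/\lambda,\lambda\ave{w^{-1}})\leqslant C(Q)\,\lambda^{-(1+A)/(1-A)}.
$$
The resulting tail integral converges precisely when $\alpha<2A/(1-A)$, so the definition
$$
\alpha_0:=\frac{2A}{1-A}=\frac{1}{\gamma_+-1}
$$
(where the second equality uses $\gamma_+^2=2Q\gamma_+-Q$, a direct consequence of \eqref{gamma}) produces the desired threshold; it is positive for every $Q>1$, and gives a constant $C(\alpha,Q)$ after a final elementary integration.

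The principal obstacle is the scaling bookkeeping in $\Omega_{\textup{IV}}$: one must verify that the three summands in \eqref{answerforb} genuinely contribute at the common order $\lambda^{-(1+A)/(1-A)}$ with no catastrophic cancellation, and that the implicit constant $C(Q)$ stays uniformly bounded as $\ave{w^{-1}}$ varies in $[1,Q]$. The remaining steps—the geometric identification of $\Omega_{\textup{IV}}$ for large $\lambda$, the trivial control on the short initial range, and the final convergence test—are routine.
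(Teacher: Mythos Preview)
Your proof is correct and follows essentially the same route as the paper's: layer-cake representation of $\ave{w^{1+\alpha}}$, control of the distribution function by the explicit Bellman function, and extraction of the power-law decay $\lambda^{-(1+A)/(1-A)}$ from the $\Omega_{\textup{IV}}$ formula, giving the identical threshold $\alpha_0=\tfrac{2A}{1-A}=\tfrac{1}{\gamma_+-1}=\sqrt{Q/(Q-1)}-1$. Your version is in fact slightly more complete than the paper's, since you carry the parameter $\ave{w^{-1}}\in[1,Q]$ throughout (your substitution $v=u/\lambda$ with $u=u(y)$ bounded) rather than restricting, as the paper does ``for the sake of simplicity,'' to the boundary case $\ave{w}\ave{w^{-1}}=Q$.
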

We refer one more time to the paper ~\cite{DiWa}, where the opposite question was considered: a Reverse H\"{o}lder weight self-improves to an $A_p$ weight.

We should say that this result is known. It was proved, for example, in ~\cite{Va2} with a sharp constant $C$. However, here we prove it just as an application of our sharp estimate for a distribution function of $A_2$ weights.

Let us start calculating the function $B$. We remind the reader that in our case
$$
B(x_1, x_2)=\sup\Big\{ |\{t\colon w(t)\geqslant 1\}|\colon \ave{w}=x_1, \ave{w^{-1}}=x_2, w\in A_{2}^Q\Big\}.
$$
We start with calculating constants $\gamma_{\pm}$. We have an equation
$$
Q(1+1)\frac{1}{\gamma}=1+Q\cdot \frac{1}{\gamma^2},
$$
which has two solutions
$$
\gamma_+=Q+\sqrt{Q^2-Q}, \; \; \; \; \; \gamma_-=Q-\sqrt{Q^2-Q}.
$$
Therefore,
$$
v_- = \frac{\gamma_-}{\gamma_+}=\frac{Q-\sqrt{Q^2-Q}}{Q+\sqrt{Q^2+Q}}.
$$

We know that in $\Om_I$ our function $B=1$. Let us calculate numbers $a,b,c$ for $\Om_{\textup{II}}$.

We have
\begin{align}
&1-v_-=\frac{2\sqrt{Q^{2}-Q}}{Q+\sqrt{Q^2-Q}}, \\
&v_{-}^{-1}-1=\frac{2\sqrt{Q^2-Q}}{Q-\sqrt{Q^2-Q}},\\
&v_- - v_-^{-1} = -4\sqrt{Q^2-Q}.
\end{align}
Further,
\begin{align}
&a=\frac{v_-}{(1-v_-)(v_- - v_-^{-1})}=-\frac{Q-\sqrt{Q^2-Q}}{8(Q^2-Q)},\\
&b=\frac{v_-^{-1}}{(v_-^{-1}-1)(v_- - v_-^{-1})} = -\frac{Q+\sqrt{Q^2-Q}}{8(Q^2-Q)},\\
&c=1-\frac{1}{(v_- - 1)(v_-^{-1}-1)}=1+\frac{1}{4(Q-1)}.
\end{align}

We proceed to the domain $\Om_{\textup{III}}$. Let us find the parameter $v$ in terms of $x_1$ and $x_2$. We have an equation
$$
\frac{1}{v}(1-x_1)-v(1-x_2)=x_2-x_1.
$$
It is a quadratic equation, and we know that $v=1$ is a root. It is not hard to find the second one. Namely,
$$
v=\frac{1-x_1}{x_2-1}.
$$
Therefore, in $\Om_{\textup{III}}$ we have
$$
B(x)=\frac{x_1 x_2 -1}{x_1+x_2-2}.
$$
We now proceed to the domain $\Om_{\textup{IV}}$. Here we need more work. First of all, we should again find our $v$. Our equation (again quadratic) is the following:
$$
x_2=Q\cdot (-1)\cdot \gamma_{+}^{-2} \cdot v^{-2}\cdot(x_1-v)+\frac{1}{v},
$$
which reduces to
$$
x_2 v^{2}-v(1+\frac{Q}{\gamma_+^{2}})+\frac{Q}{\gamma_+^{2}}x_1=0.
$$
We have two solutions of this equation:
$$
v=\frac{1+\frac{Q}{\gamma_+^{2}}\pm \sqrt{\left(1+\frac{Q}{\gamma_+^{2}}\right)^2-4\frac{Q}{\gamma_{+}^{2}}x_1 x_2}}{2x_2}.
$$
We should take the biggest value of $v$. It is obvious from the picture, since we always want our point $x$ to lie between $(v, v^{-1})$ and the point $(\gamma_+ v, Q(\gamma_+v)^{-1})$ (we could appeal to the definition of $v$ from the general consideration, but we repeat it to make it easier to read).

So, we put
$$
v=\frac{1+\frac{Q}{\gamma_+^{2}}+ \sqrt{\left(1+\frac{Q}{\gamma_+^{2}}\right)^2-4\frac{Q}{\gamma_{+}^{2}}x_1 x_2}}{2x_2}.
$$
We can make the following amazing simplification:
$$
\frac{Q}{\gamma_+^{2}}=Q\frac{\gamma_{-}}{\gamma_+} \frac{1}{\gamma_- \gamma_+}=v_-.
$$
Therefore,
$$
v=\frac{1+v_- + \sqrt{(1+v_-)^{2} - 4v_- x_1 x_2}}{2x_2}.
$$
Note also that we had a notation
$$
A=\frac{Q}{\gamma_+^{2}}=v_-.
$$
Therefore, we get an easy answer for $B$:
$$
B(x)=\frac{1}{1-v_-} \frac{v_-^{-\frac{2v_-}{1-v_-}}}{1-v_-}v^{\frac{2}{1-v_-}} \cdot (x_2 + \frac{x_1}{v^{2}} - \frac{2}{v}) = \frac{1}{1-v_-} \frac{v_-^{-\frac{2v_-}{1-v_-}}}{1-v_-} v^{\frac{2}{1-v_-}-2}\cdot (v^{2}x_2 + x_1 - 2v).
$$
But we have a nice relation between $v$ and $(x_1, x_2)$, namely,
$$
v^{2}x_2=v-v_- (x_1-v).
$$
We get, therefore, that
$$
B(x)=\frac{v_-^{-\frac{2v_-}{1-v_-}}}{1-v_-}\cdot v^{\frac{2v_-}{1-v_-}}\cdot (x_1-v).
$$
Finally,
$$
x_1-v = \frac{2x_1x_2 - (1+v_- + \sqrt{(1+v_-)^{2}-4v_- x_1 x_2})}{2x_2}.
$$
We know that $x_1x_2\leqslant Q$, so
$$
x_1-v \leqslant \frac{2Q - 1-v_- - \sqrt{(1+v_-)^{2} - 4Qv_-}}{2x_2}.
$$
The last expression is equal to
$$
\frac{\sqrt{Q^2-Q}}{x_2},
$$
so we get
$$
x_1-v\leqslant \frac{\sqrt{Q^2-Q}}{x_2}.
$$
Note that this estimate is in some sense sharp. We cannot guarantee that if our $w$ has $A_2$-``norm'' equal to $Q$ then is attains on the initial interval. But there are a lot of such functions, for example, the one from Section \ref{menshe}.

We get the following estimate for $B(x)$, when $x\in \Om_{\textup{IV}}$:
$$
B(x)\leqslant \frac{v_-^{-\frac{2v_-}{1-v_-}}}{1-v_-} v^{\frac{2v_-}{1-v_-}} \frac{\sqrt{Q^{2}-Q}}{x_2}.
$$
Moreover, since $x_1x_2\in [1, Q]$, we get
$$
v\asymp \frac{1}{x_2}.
$$
Therefore,
$$
B(x)\leqslant C(Q) x_2^{-\frac{Q}{\sqrt{Q^2-Q}}}.
$$
We also note that if $x_1x_2=Q$ then
$$
C(Q)=\frac{\gamma_+^{\frac{2v_-}{1-v_-}}}{1-v_-}\sqrt{Q^2-Q}.
$$
Therefore, we write several answers for $B$.

$$
B(x)=\begin{cases} 1, & x\in\Om_{\textup{I}} \\
                -\frac{Q-\sqrt{Q^2-Q}}{8(Q^2-Q)}x_1 - \frac{Q+\sqrt{Q^2-Q}}{8(Q^2-Q)}x_2 + 1 + \frac{1}{4(Q-1)}, &x\in \Om_{\textup{II}}\\
                \frac{x_1x_2-1}{x_1+x_2-2}, & x\in \Om_{\textup{III}}\\ \\
                \frac{v_-^{-\frac{2v_-}{1-v_-}}}{1-v_-}\cdot v^{\frac{2v_-}{1-v_-}}\cdot (x_1-v), &x\in \Om_{\textup{IV}}
                \end{cases}.
$$
To proceed we also write an estimate for $B$:
$$
B(x)\leqslant \begin{cases} 1, &x\in \Om_{\textup{I}}\cup \Om_{\textup{II}}\\
                        \frac{Q-1}{x_1+x_2-2}, &x\in \Om_{\textup{III}}\\
                        C(Q)x_2^{-\frac{Q}{\sqrt{Q^2-Q}}}, &x\in\Om_{\textup{IV}}
                        \end{cases}.
$$

Now we proceed to the Reverse H\"{o}lder property. For the sake of simplicity we consider a function $w$ such that $\ave{w}\ave{w^{-1}}=Q$. It simplifies things a little since then the point $(\ave{tw}, \ave{t^{-1} w^{-1}})$ is on the curve $\Gamma_Q$ and, for example, never comes to $\Om_{\textup{III}}$.

Next, we use that
$$
\ave{w^{1+\alpha}}=(1+\alpha) \ili_{0}^{\infty} s^{\alpha} F_w(s) ds,
$$
where
$$
F_w(s)=|\{t\colon w(t)\geqslant s\}|.
$$
We know that
$$
F_w(s)\leqslant B(x_1, x_2; s) = B(\frac{x_1}{s}, x_2 s),
$$
thus
$$
\ave{w^{1+\alpha}}\leqslant (1+\alpha)\ili_{0}^{\infty} s^{\alpha}B(\frac{x_1}{s}, x_2s)ds.
$$
We consider a point $S=(\frac{x_1}{s}, x_1s)$. Note that
$$
S\in \begin{cases} \Om_{\textup{I}}, &s\leqslant\frac{x_1}{\gamma_+} \\
                    \Om_{\textup{II}}, &s\in [\frac{x_1}{\gamma_{+}}, \frac{x_1}{\gamma_{-}}]\\
                    \Om_{\textup{IV}}, &s\geqslant \frac{x_1}{\gamma_{-}}
                    \end{cases}.
$$
Thus,
$$
\ave{w^{1+\alpha}}\leqslant (1+\alpha)\ili_{0}^{\infty} s^{\alpha}B(\frac{x_1}{s}, x_2 s)ds\leqslant
C\left( \ili_{0}^{\frac{x_1}{\gamma_-}} s^{\alpha}ds + \ili_{\frac{x_1}{\gamma_{-}}}^{\infty}s^{\alpha} x_{2}^{-\frac{Q}{\sqrt{Q^2-Q}}}s^{-\frac{Q}{\sqrt{Q^{2}-Q}}}ds \right).
$$
Note that the right-hand side gives us an estimate of the form $Cx_1^{1+\alpha} = C\ave{w}^{1+\alpha}$ as long as the second integral converges on $\infty$. It does when
$$
\alpha-\frac{Q}{\sqrt{Q^2-Q}}<-1,
$$
or, equivalently,
$$
\alpha < \sqrt{\frac{Q}{Q-1}}-1.
$$
This finishes our proof.
\section{Some final remarks}
In this section we state some remarks which are about some unconsidered cases.

First of all, we did not consider cases $p_{k}=0, \pm \infty$. However, in these cases our method works in the same way. In the case $p=0$ the expression
$\av{w^p}J^{\frac1p}$ has to be replaced by $\exp\av{\log w}J$. It has to be
replaced by $\sup_J w$ in the case $p=+\infty$ and by $\inf_J w$ in the case
$p=-\infty$. The answer for $\mathcal{B}$ in these cases will be obtained by passing to the limit when $p_{k}\to 0, \pm \infty$.
We also note that for the $A_\infty$ case, i.e., when $p_1=1$ and $p_2=0$, one can get an answer without passing to limit. The correct variables will be $x_1=\ave{w}$ and $x_2=\ave{\log w}$ with the relation
$$
x_1\exp(-x_2)\in [1, Q].
$$
With the same splitting of the domain as it was above, the answer is the following:
$$
B(x)=\begin{cases} 1, & x\in \Om_{\textup{I}}\\
-\frac{v_-}{(v_- - 1)^2}x_1 + \frac{1}{\log (v_-)}\cdot \frac{1}{v_- - 1} x_2 + \left(1+\frac{v_-}{(v_- - 1)^2}\right), &x\in \Om_{\textup{II}} \\
\frac{x_1 - v}{1-v}, &x\in \Om_{\textup{III}}\\
\frac{\gamma_+}{\gamma_+ - 1}\cdot \frac{1}{1-v_-}\cdot \left(x_1 - x_2 v - v(1-\log(v))\right), &x\in \Om_{\textup{IV}}
\end{cases}.
$$
where the function $v$ is defined by an implicit formula:
$$
v(x_1, x_2)=\begin{cases}x_2(1-v)=( 1-x_1) \cdot \log(v), &x\in \Om_{\textup{III}}\\
vx_2 = \frac{1}{\gamma_+}\cdot(x_1-v) + v\log(v), &x\in \Om_{\textup{IV}}
\end{cases}.
$$

Further, we can consider another Bellman function, when we calculate
$$\sup\left(|\{w>1\}|, \ldots \right).$$ The answer will be absolutely the same in every points except $(1,1)$, where our new function will be zero. Also we will not have an extremal function for every point; instead, we need to build an extremal sequence.

Moreover, since
$$
|\{w\leqslant 1\}|=1-|\{w>1\}|,
$$
we get that
$$
\sup\left(|\{w\leqslant 1\}|, \ldots \right)=1-\inf\left( |\{w>1\}|, \ldots \right).
$$
Using our technique, one can easily calculate the right-hand side. The function for $\inf$ can be calculated in the same way as $\mathcal{B}$ with one difference: it must be convex instead of being concave.

\end{document}